\title{A Spectral Approach to Polytope Diameter}
\author{Hariharan Narayanan\footnote{Supported by a Swarna Jayanti fellowship}\\ TIFR Mumbai \and Rikhav Shah\\ UC Berkeley \and Nikhil Srivastava\footnote{Supported by NSF Grants CCF-1553751 and CCF-2009011.}\\ UC Berkeley}
\date{\today}
\theoremstyle{plain}
\newtheorem{theorem}{Theorem}[section]
\newtheorem{lemma}[theorem]{Lemma}
\newtheorem{definition}[theorem]{Definition}
\theoremstyle{definition}
\newtheorem{remark}[theorem]{Remark}
\newtheorem{claim}[theorem]{Claim}
\newcommand{\conv}{\textnormal{conv}}
\newcommand{\aff}{\textnormal{aff}}
\newcommand{\vol}{\textnormal{Vol}}
\newcommand{\dist}{\textnormal{dist}}
\newcommand{\spn}{\textnormal{span}}
\newcommand{\herm}[2]{\langle #1,#2\rangle}
\newcommand{\E}{\mathbb{E}}
\newcommand{\R}{\mathbb{R}}
\renewcommand{\P}{\mathbb{P}}
\newcommand{\cond}[2]{\left[ #1 \bigg| #2\right]}
\newcommand{\Z}{\mathbb{Z}}
\newcommand{\qu}{\chi_2}
\newcommand{\poly}{\mathrm{poly}}
\newcommand{\st}{:}
\newcommand{\facets}{\mathcal{F}}
\begin{document}

\maketitle

%TODO mandatory: add short abstract of the document
\begin{abstract}

We prove upper bounds on the graph diameters of polytopes in two settings.
The first is a worst-case bound for polytopes defined by integer constraints
in terms of the height of the integers and certain subdeterminants of the
	constraint matrix, which in some cases improves previously known results.
The second is a smoothed analysis bound: given an appropriately normalized polytope, we add small Gaussian noise to each constraint.  We consider a natural geometric measure on the vertices of the perturbed polytope (corresponding to the mean curvature measure of its polar) and show that with high probability there exists a ``giant component'' of vertices, with measure $1-o(1)$ and polynomial diameter.
Both bounds rely on spectral gaps --- of a certain Schr\"odinger operator in the first case, and a certain continuous time Markov chain in the second --- which arise from the log-concavity of the volume of a simple polytope in terms of its slack variables.
\end{abstract}

\tableofcontents
\section{Introduction}
\newcommand{\NNN}{J_{\mathsf{avg}}}
The polynomial Hirsch conjecture asks whether the diameter of an arbitrary bounded polytope $P=\{x\in\R^d:Ax\le b\}$ is at most a fixed polynomial in $m$ and $d$. This conjecture is widely open, with the best known upper bounds being $(m-d)^{\log_2 d- \log_2\log d +O(1)}$ (\cite{sukegawa2019asymptotically}, see also \cite{kalai1992quasi, todd2014improved}) and $O(m)$ for fixed $d$ (\cite{larman1970paths,barnette1974upper}); the best known lower bound  is $(1+\epsilon)m$ for some $\epsilon>0$ when $d$ is sufficiently large \cite{santos2012counterexample}.  Given this situation, there has been interest in the following potentially easier questions:
\begin{enumerate}
    \item [Q1.] Assuming $A,b$ have integer entries, bound the diameter of $P$ in terms of their size.
    \item [Q2.] Assuming $A,b$ are sampled randomly from some distribution, bound the diameter of $P$ with high probability.
\end{enumerate}
Progress on these questions (\cite{bonifas2014sub,brunsch2013finding,eisenbrand2017geometric,dadush2016shadow}, \cite{borgwardt2012simplex,spielman2004smoothed,vershynin2009beyond,dadush2020friendly}) has relied mostly on techniques from polyhedral combinatorics, integral geometry, probability, and operations research (e.g., analysis of the simplex algorithm and its cousins). 

On the other hand, the Brunn-Minkowski theory of polytopes has developed largely
separately over the past century, with several celebrated achievements including
the Alexandrov-Fenchel inequality \cite{alexandroff1937theorie} and more
generally the Hodge-Riemann relations for certain algebras associated with
simple polytopes \cite{timorin1999analogue}. One consequence of this theory is
that a certain Schr\"odinger operator (weighted adjacency matrix plus diagonal)
associated with the graph of {\em every} bounded polytope has a spectral gap
\cite{izmestiev2010colin} (see Definition \ref{defn:hessian} and Theorem
\ref{lem:one_positive_eig}). We use this fact to make progress on Q1 and Q2. In
the first setting, we show the following theorem, where $\|\cdot\|_\infty$
denotes the maximum magnitude entry of a matrix. 
\begin{theorem}\label{thm:q1}
Suppose $P=\{x\in\R^d:Ax\le b\}$ is a bounded polytope with integer coefficients
	$A\in\Z^{m\times d},b\in\Z^m$ such that every $d\times d$ minor of $[A|b]$ has
	determinant bounded by $\Delta$. Then $P$ has diameter
	\begin{equation}\label{eqn:q1bound} O(d^2\Delta\|A\|_\infty\cdot \log (m\|A\|_\infty
	\|b\|_\infty \Delta))\end{equation}

\end{theorem}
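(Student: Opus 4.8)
The plan is to convert the spectral information of Theorem~\ref{lem:one_positive_eig} into a mixing estimate for a Markov chain on the graph of $P$, and then to bound the spectral quantities that appear using the integrality of $A,b$ and the subdeterminant hypothesis. After translating $P$ so that $0$ is an interior point (which replaces $b$ by a rational vector of polynomially bounded height and leaves the graph unchanged), Theorem~\ref{lem:one_positive_eig} and Definition~\ref{defn:hessian} furnish a symmetric matrix $M$ indexed by the vertex set $\verts$ of $P$, with $M_{uv}>0$ when $\{u,v\}$ is an edge of $P$, $M_{uv}=0$ for non-adjacent $u\ne v$, and exactly one positive eigenvalue $\lambda_1$. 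As $M+cI$ is irreducible and entrywise nonnegative for $c$ large, Perron--Frobenius gives a strictly positive $\lambda_1$-eigenvector $\phi$, which we normalize to $\|\phi\|_2=1$. Up to normalization, $M$ is the Hessian of the volume of the polar $P^\ast=\conv\{a_i/b_i\}$ with respect to the offsets of its facets; this both explains the sign structure and yields a lower bound on $\lambda_1$, since along the positive radial direction $v_0$ (scaling of $P^\ast$) the volume is a positive multiple of a negative power of the scale, so $\langle Mv_0,v_0\rangle>0$ and $\lambda_1\ge\langle Mv_0,v_0\rangle/\|v_0\|_2^2$, a quantity comparable up to normalization to $\vol(P^\ast)$.

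Now pass to a continuous-time Markov chain: with $\Phi=\diag(\phi)$, the matrix $Q=\Phi^{-1}(M-\lambda_1 I)\Phi$ generates a reversible chain on the graph $G$ of $P$, with reversing measure $\pi(u)\propto\phi(u)^2$; its off-diagonal entries $Q_{uv}=\phi(u)^{-1}M_{uv}\phi(v)$ are nonnegative, and the eigen-equation $M\phi=\lambda_1\phi$ gives the identity
\begin{equation}
R_u:=\sum_{v\ne u}Q_{uv}=\lambda_1-M_{uu},\qquad\text{hence}\qquad R:=\max_u R_u=\lambda_1-\min_u M_{uu}.
\end{equation}
Since $M$ has a unique positive eigenvalue, $-Q$ has spectral gap $\lambda_1-\lambda_2\ge\lambda_1$. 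Two standard estimates combine: the chain reaches $v$ from $u$ only after at least $d(u,v)$ jumps, and the number of jumps in time $t$ is stochastically dominated by a Poisson$(Rt)$ variable, so $p_t(u,v)\le (Rt)^{d(u,v)}/d(u,v)!$; while reversibility and the gap give $|p_t(u,v)-\pi(v)|\le\sqrt{\pi(v)/\pi(u)}\,e^{-\lambda_1 t}$, so that at $t^\ast=\Theta(\lambda_1^{-1}\log(1/\pi_{\min}))$ we have $p_{t^\ast}(u,v)\ge\pi_{\min}/2$. Comparing the two bounds and using $k!\ge(k/e)^k$ forces $d(u,v)=O(Rt^\ast+\log(1/\pi_{\min}))$, that is,
\begin{equation}
\diam(P)=O\!\left(\Big(1+\frac{-\min_u M_{uu}}{\lambda_1}\Big)\log\frac{1}{\pi_{\min}}\right).
\end{equation}

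It therefore suffices to prove (i) $(-\min_u M_{uu})/\lambda_1=O(d\Delta\|A\|_\infty)$ and (ii) $\log(1/\pi_{\min})=O(d\log(m\|A\|_\infty\|b\|_\infty\Delta))$, whose product is \eqref{eqn:q1bound}. For (i), Cramer's rule shows every vertex of $P$ has coordinates equal to a ratio of $d\times d$ minors of $[A\mid b]$, hence Euclidean norm $O(\sqrt d\,\Delta)$; dually this bounds, in terms of $\|A\|_\infty$ and $\Delta$, the extreme scales of $P^\ast$ and of its faces, and since $-M_{uu}$ and $\lambda_1$ are both comparable (up to a common normalization) to mixed volumes built from faces of $P^\ast$ while their ratio is scale-invariant, the potentially exponential scale factors cancel and only the aspect ratio of $P^\ast$ --- kept polynomial by the Cramer bound --- survives. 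Part (ii) is the main obstacle: one must show that the Perron eigenvector $\phi$ is not too lopsided, but a direct propagation estimate such as $\phi(u)\ge(\min_{\text{edge}}M_{uv}/R)^{\diam(P)}\phi(u^\ast)$ is circular, its exponent being exactly the quantity under study. To break the circularity one should instead exploit the geometry of $M$ --- for instance that the affine functions of the vertex coordinates lie in (or near) its kernel, so that the $\phi$-weighted barycenter of $\verts$ is essentially the origin --- together with the integrality of $A,b$, in order to bound $\min_u\phi(u)$ directly by an inverse polynomial in $m,\|A\|_\infty,\|b\|_\infty,\Delta$ times $|\verts|^{-1/2}$. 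I expect this eigenvector estimate, together with the accounting needed to keep the prefactor at $d^2$ rather than a larger power of $d$, to be where the real difficulty lies.
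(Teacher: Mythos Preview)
Your proposal correctly identifies the spectral route, but it has a genuine gap precisely where you flag it: controlling the Perron eigenvector $\phi$. The paper does not attempt to bound the top eigenvector of $H=H(P^\circ)$ directly. Instead it introduces the diagonal scaling $D$ with $D_{ii}=\sum_k |F_{ik}|\tan(\theta_{ik}/2)$ and works with $D^{-1/2}HD^{-1/2}=I-D^{-1/2}LD^{-1/2}$, whose top eigenvector is \emph{explicitly} $D^{1/2}\mathbf{1}$ (Lemma~\ref{lem:spectral_gap}). Thus $v_{\min}^2$ is simply $\min_i D_{ii}/\sum_i D_{ii}$, which reduces to bounding ratios of $(d-2)$-face volumes and the quantities $\tan(\theta_{ij}/2)$; both are handled by elementary integer-geometry estimates (Lemma~\ref{lem:ratio_of_volumes}). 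Your suggested attack on $\phi_{\min}$ via ``affine functions lie in the kernel'' and barycenter arguments is not needed and, as you yourself note, would be circular or at best very delicate; the diagonal scaling dissolves the difficulty entirely.

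There is a second, quantitatively significant difference. Your mixing argument yields a diameter of order $(R/\lambda_1)\log(1/\pi_{\min})$, which in the paper's normalization amounts to $\csc^2(\theta_0/2)\cdot O(d\log(\cdots))$, hence a prefactor of order $(d\Delta\|A\|_\infty)^2$. The paper instead applies the Chebyshev-polynomial trick (Lemma~\ref{lem:chebyshev}, after \cite{sokal1988absence}) to obtain $\log(N/v_{\min}^2)/\sqrt{g}$ with $g=\sin^2(\theta_0/2)$, i.e.\ a $\csc(\theta_0/2)$ rather than $\csc^2(\theta_0/2)$ dependence. This square-root saving is what brings the leading factor down to $d^2\Delta\|A\|_\infty$ rather than $d^3\Delta^2\|A\|_\infty^2$. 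Finally, your sketch of (i) --- bounding $(-\min_u M_{uu})/\lambda_1$ by ``cancellation of scale factors in mixed volumes'' --- is too vague to be an argument; in the paper the corresponding ratio is controlled not by comparing to $\lambda_1$ at all, but via the Gershgorin bound $\sup_i\sum_j|(D^{-1}H)_{ij}|\le\csc^2(\theta_0/2)$ together with the explicit angle estimate $\csc\theta_0\le 2d\Delta\|A\|_\infty$ from Lemma~\ref{lem:ratio_of_volumes}(2).
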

Theorem \ref{thm:q1} follows from a  more geometric result (Theorem
\ref{thm:4.4}) stated in terms of the angles between the $d-2$-faces of the polar of
$P$, which is proven in Section \ref{sec:q1}. It may be contrasted with the best previously known result of this kind due to
\cite{dadush2016shadow}, who achieved a bound of $O(d^3\Delta_{d-1}^2)$, where
$\Delta_{d-1}$ is the largest $(d-1)\times (d-1)$ subdeterminant of $A$, in
particular independent of $b$. The two bounds are incomparable in general, but
as $\Delta\le d\|b\|_\infty \Delta_{d-1}$ it is seen that \eqref{eqn:q1bound}
is nearly linear in $\Delta_{d-1}$ whereas the result of \cite{dadush2016shadow} is
quadratic, which yields an improvement for large $\Delta_{d-1}$ (compared to
$\|A\|_\infty,\|b\|_\infty,\log m)$.
However, our diameter bound is nonconstructive whereas  \cite{dadush2016shadow}
show how to efficiently find a path between any two vertices of $P$; we refer
the reader to the introduction of that paper for a more thorough discussion of
previous work in this vein (originally initiated by \cite{dyer1994random,
bonifas2014sub}). At a high level, the reason we are able to save a factor of
$d$ in comparison with previous works is that they rely on combinatorial
expansion arguments, whereas we use spectral expansion, which is amenable to a
``square root'' improvement using Chebyshev polynomials, first introduced in
\cite[Theorem 3.1]{sokal1988absence}. 

Regarding Q2, the study of diameters of random polytopes began with the influential work of Borgwardt \cite{borgwardt1977untersuchungen,borgwardt2012simplex}, who considered $A$ with i.i.d. standard Gaussian entries and $b=1$. Borgwardt showed the following ``for each'' guarantee:  for any fixed objective functions $c,c'\in\R^d$, the combinatorial distance between the vertices $x,x'$ of $P$ maximizing $\langle c,x,\rangle,\langle c',x'\rangle$ is  at most $O(d^{3/2}\sqrt{\log m})$ in expectation, provided $m\rightarrow \infty$ sufficiently rapidly. This type of result was extended to the ``smoothed unit LP'' model by Spielman and Teng in the seminal work \cite{spielman2004smoothed}; in this model one takes
\begin{equation}\label{eqn:smoothedlp} P = \{x\in\R^d: \langle x,v_j\rangle\le 1\}\end{equation}
where $v_j\sim N(a_j,\sigma^2)$ for some fixed vectors $a_1,\ldots,a_m$ normalized to have $\|a_j\|\le 1$. The original $\poly(m,d,\sigma^{-1})$ path length bound of \cite{spielman2004smoothed} was improved and simplified in \cite{deshpande2005improved,vershynin2009beyond,dadush2020friendly}; a key ingredient in each of these results was a ``shadow vertex bound'' analyzing the expected number of vertices of a two-dimensional projection of $P$. Note that all of these results provide ``for each'' guarantees: at best they bound the distance between a single pair of vertices, not between all pairs.\\

Our second contribution is to prove that for the smoothed unit LP model, {\em most} pairs of vertices in $P$ are polynomially (in $m,d,\sigma^{-1}$) close with high probability, where most is defined with respect to a certain locally defined measure on the vertices known as the {\em mean curvature measure} $\qu$ in convex geometry (see \cite{schneider2014convex,schneider1994polytopes}; we recall the definition in Section \ref{sec:smoothed}). In the language of random graph theory, this means that the graph of $P$ likely contains a ``giant component'' with respect to $\qu$ which is of small diameter.
\begin{theorem}\label{thm:q2} Assume $P$ is a random polytope sampled from the smoothed LP model. Let $\chi_2$ denote the mean curvature measure on the facets of $P^\circ$, which corresponds naturally to a measure on the set of vertices of $P$, denoted $\Omega$. 
 Then with probability at least $1-1/\poly(m)$, for every $\psi>0$ there is a subset $G:=G(\psi)\subset\Omega$ with $\qu(G)\ge (1-\psi)\qu(\Omega)$ such that the vertex diameter of $G$ is at most 
 \begin{equation}\label{eqn:q2bound} O\left(\frac{\poly(m,d)}{\sigma^4\psi}\right).\end{equation}
\end{theorem}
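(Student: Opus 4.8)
I would prove Theorem~\ref{thm:q2} by combining the qualitative spectral-gap machinery already developed (a reversible continuous-time Markov chain on $\Omega$ coming from the volume Hessian $\mathcal H$ of Definition~\ref{defn:hessian}, whose spectral gap is positive by Theorem~\ref{lem:one_positive_eig}) with (i) a soft graph-theoretic lemma turning a spectral gap into a giant low-diameter component, and (ii) a smoothed-analysis estimate making the gap polynomially large. Concretely, write the chain as a generator $L$ on $\Omega$, reversible with respect to $\pi:=\qu/\qu(\Omega)$, normalized to have unit exit rates ($-L_{vv}=1$), with edge set the $1$-skeleton of $P$ and spectral gap $\gamma:=\lambda_2(-L)>0$. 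It then suffices to establish: (D) if $L$ has spectral gap $\gamma$ then for every $\psi>0$ there is $G\subseteq\Omega$ with $\qu(G)\ge(1-\psi)\qu(\Omega)$ and vertex diameter $O(\gamma^{-1}\log(|\Omega|/\psi))$; and (S) with probability at least $1-1/\poly(m)$ over the perturbation, $\gamma\ge\sigma^4/\poly(m,d)$. Since $|\Omega|\le\binom{m}{d}$ and $\log(1/\psi)\le1/\psi$, combining (D) and (S) yields \eqref{eqn:q2bound}, all of the failure probability coming from (S).

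For (D), which I expect to be routine, the plan is: by Cheeger's inequality for reversible chains, $\gamma\le 2\Phi$, so the edge-conductance satisfies $\Phi\ge\gamma/2$. Two observations then finish it. First, a counting bound: at most $|\Omega|$ vertices can have $\pi$-mass below $\psi/|\Omega|$, so $G:=\{v\in\Omega:\pi(v)\ge\psi/|\Omega|\}$ has $\pi(G)\ge1-\psi$. Second, a ball-growing estimate: for $v_0\in G$ let $B_r$ be the graph ball of radius $r$ about $v_0$; whenever $\pi(B_r)\le 1/2$, conductance gives cut-mass $Q(B_r,B_r^c)\ge\Phi\,\pi(B_r)$, while every endpoint outside $B_r$ of a cut edge lies in $B_{r+1}$, and reversibility together with the unit exit rates gives $Q(B_r,B_r^c)\le\pi(B_{r+1})-\pi(B_r)$; hence $\pi(B_{r+1})\ge(1+\Phi)\pi(B_r)$ and so $\pi(B_R(v_0))>1/2$ once $R=O(\Phi^{-1}\log(|\Omega|/\psi))=O(\gamma^{-1}\log(|\Omega|/\psi))$, uniformly over $v_0\in G$. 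Since two subsets of $\Omega$ of $\pi$-mass exceeding $1/2$ must intersect, $\dist(v_0,v_1)\le 2R$ for all $v_0,v_1\in G$, proving (D).

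For (S), the crux, the plan is to split it into a deterministic geometric lower bound on $\gamma$ and a probabilistic estimate. For the former, one feeds quantitative geometric data of $P$ and $P^\circ$ into the proof of Theorem~\ref{lem:one_positive_eig}, much as Theorem~\ref{thm:4.4} does for Q1: $\gamma$ is bounded below in terms of the inradius and circumradius of $P$ and $P^\circ$, the smallest dihedral angle along the $(d-2)$-faces of $P^\circ$ carrying a non-negligible fraction of $\qu$, and the extreme singular values of the $d\times d$ submatrices of $[v_1\,|\,\cdots\,|\,v_m]$. For the latter, I would show that with probability at least $1-1/\poly(m)$ all of these are simultaneously controlled: $\|v_j\|\le\poly(m)$ for every $j$; every $d\times d$ submatrix of $[v_1\,|\,\cdots\,|\,v_m]$ has least singular value $\ge\sigma/\poly(m,d)$ (the smoothed condition-number bounds of \cite{spielman2004smoothed}, with a union bound over $\binom{m}{d}$ subsets), which pins down vertex locations, edge lengths and dihedral angles up to $\poly(m,d)$ factors; $P^\circ=\conv\{v_j\}$ contains a ball of radius $\ge\poly(1/m)$ about the origin and $P$ contains a ball of radius $\ge\sigma/\poly(m,d)$ (so $P$ is bounded and well-rounded, as is standard in this model, see \cite{spielman2004smoothed,vershynin2009beyond}); and a Gaussian anti-concentration bound rules out $(d-2)$-faces of $P^\circ$ that carry $\ge\psi/\poly(m,d)$ of the mass yet have dihedral angle within $\sigma/\poly(m,d)$ of $0$ or $\pi$. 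Tracking the worst power of $\sigma$ through the composition --- one or two factors of $\sigma_{\min}^{-1}\asymp\sigma^{-1}$, one of which is squared by the Cheeger step --- yields $\gamma\ge\sigma^4/\poly(m,d)$.

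The main obstacle is (S). Unlike the ``for each'' analyses of simplex-type methods, here the geometry of \emph{every} vertex, edge, and ridge of the perturbed polytope must be controlled simultaneously, which forces a union bound over exponentially many potential faces and hence strong (polynomial-in-$m$) tail and anti-concentration estimates for each; and we are lower-bounding a delicate analytic quantity --- essentially a spectral gap of the volume Hessian --- rather than a combinatorial count, so the real content is to show that the Hodge--Riemann/Izmestiev lower bound behind Theorem~\ref{lem:one_positive_eig} degrades only polynomially under the relevant perturbations. Identifying the right geometric ``conductance'' functional and proving it is $\ge\sigma^{O(1)}/\poly(m,d)$ with high probability is the heart of the argument.
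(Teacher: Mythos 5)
Your step (D) is fine (and is a reasonable, somewhat more elementary substitute for the paper's route, which instead runs the chain for time $O(\log(1/\pibar_{\min}))$ from a warm start and applies Markov's inequality to the path length). The genuine gap is in (S), and it is not a technical loose end but a step that fails as proposed. Once you normalize the chain to unit exit rates, its spectral gap is governed by \emph{worst-case} ratios of the $\csc(\theta_{ST})$-weights to the stationary weights over all facets, and your plan is to control these by simultaneous high-probability bounds on every dihedral angle, every $d\times d$ submatrix, etc. But there are $\binom{m}{d}=m^{\Theta(d)}$ potential faces, so to get total failure probability $1/\poly(m)$ each anticoncentration event must have failure probability $m^{-\Theta(d)}$; since the relevant densities are only bounded by $1/\sigma$, the thresholds you can afford are of order $\sigma m^{-\Theta(d)}$, not $\sigma/\poly(m,d)$. (This is exactly why the paper's event $\B$ uses $\epsilon=m^{-5d}$.) Consequently the worst-case dihedral angle, and hence your lower bound on $\gamma$, is exponentially small in $d$, and no $\gamma\ge\sigma^4/\poly(m,d)$ bound can come out of this route. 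Nor can it be repaired by simply discarding the bad faces from $G$: a single near-degenerate ridge anywhere destroys the global spectral gap of the unit-rate chain, so you would need a conductance bound restricted to the bulk, which is not what you set up.

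The paper's resolution inverts your division of labor. It keeps the generator $Q=-D^{-1}L$ (with the $\tan(\theta/2)$ diagonal weights), whose spectral gap is \emph{deterministically} at least $1$ by Lemma \ref{lem:spectral_gap}(2) --- this is the payoff of Theorem \ref{lem:one_positive_eig}, and no probabilistic lower bound on a gap is ever needed. The price is that this chain can make many discrete jumps per unit time, and all of the probabilistic work goes into bounding the \emph{average} jump rate $\sum_S\delta(S)/\sum_S\pi(S)$ (Lemma \ref{lem:polyjump}). There the averaging is what saves the day: integrating $\csc\theta_{ST}\asymp 1/\dist(v_j,\aff(F_T))$ against the Gaussian density turns the would-be worst-case factor $1/\epsilon=m^{5d}$ into $\int_\epsilon^5 dt/(\sigma t)=O(d\log m/\sigma)$ (Lemma \ref{lem:angles}), and the remaining codimension-$2$ perimeter sum is controlled by a quadrature-by-planes argument plus the shadow vertex bound of \cite{dadush2020smoothed} (Lemmas \ref{lem:perimeter}, \ref{lem:quadrature}). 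This averaging mechanism --- worst-case quantities replaced by expectations against the stationary measure, with the exponential losses becoming logarithms --- is the missing idea in your proposal, and without it the polynomial bound \eqref{eqn:q2bound} is out of reach.
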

We prove Theorem \ref{thm:q2} in Section \ref{sec:roundedness}, where we deduce it from a more refined theorem (Theorem \ref{thm:5.15}, which includes explicit powers of $m,d$) for a certain class of well-rounded polytopes. The idea of the proof is to consider a certain continuous time Markov chain whose states are the vertices of $P$. This chain automatically has a large spectral gap by Theorem \ref{lem:one_positive_eig} and the main challenge is to bound its average transition rate. This is carried out in Sections \ref{sec:jumprate}-\ref{sec:quadrature} and involves further use of the Alexandrov-Fenchel inequalities, tools from integral geometry, Gaussian anticoncentration, and an application of the shadow vertex bound of \cite{dadush2020friendly}.

\begin{remark}
It was pointed out to us by an anonymous referee and by D. Dadush
	that there is a "folklore" result that the average distance between a
	random pair of vertices (chosen by optimizing two uniformly random
	objective functions) of $P$ as above is polynomial in $m,d,\sigma^{-1}$; this is
	seen by a Fubini type argument and the shadow vertex bounds of
	\cite{spielman2004smoothed,
	deshpande2005improved,vershynin2009beyond,dadush2020friendly}. Our
	result is incomparable since it considers a different measure on the
	vertices.
\end{remark}

\begin{remark}[Expansion of Polytopes] There has been a sustained interest in studying the expansion of graphs of combinatorial polytopes beginning with \cite{mihail1992expansion} which  conjectured that all $0/1$ polytopes have expanding graphs. The recent breakthrough \cite{anari2019log} resolved this conjecture for the special case of matroid polytopes using techniques related to high dimensional expanders and the geometry of polynomials, which may be described as capturing ``discrete log-concavity''. The present work, in contrast, uses ``continuous log-concavity'' (stemming from the Brunn-Minkowski inequality) to control the spectral gaps of certain matrices associated with the graphs of polytopes with favorable geometric properties. 

We note that the Hirsch conjecture is already known to hold for $0/1$ polytopes \cite{naddef1989hirsch}.
\end{remark}

\paragraph{Preliminaries and Notation.}
We recall some basic terminology and facts regarding polytopes; the reader may consult \cite[Chapter 4]{schneider2014convex} for a more thorough introduction. 

We denote the convex hull of a set of points by $\conv(\cdot)$ and its affine hull by $\aff(\cdot)$. 
Let $P=\{x\in\R^d:Ax\le b\}$ with $A\in\R^{m\times d}, b\in\R^m_{>0}$ be a bounded polytope containing the origin in its interior. Its polar is the polytope
$$ P^\circ = \conv\{b_j^{-1}a_j\}_{j=1}^m=:K,$$
where $a_1^T,\ldots,a_m^T$ are the rows of $A$.

A polytope in $\R^d$ is called {\em simple} if each of its vertices is contained
in exactly $d$ (codimension-$1$) facets, and {\em simplicial} if each codimension-$1$ facet contains exactly $d$ vertices. Unless otherwise noted, ``facet'' refers to a codimension-$1$ facet. The polar of a simple polytope is simplicial and vice versa.

The $1-$dimensional faces of a polytope are called {\em edges}, and are all line segments when it is bounded. The {\em vertex diameter} of a bounded polytope $P$ is the diameter of the graph of its vertices and edges. Two facets of a polytope are {\em adjacent} if their intersection is a $(d-2)$-face of the polytope. The {\em facet diameter} of a  polytope $K$ is the diameter of the graph with vertices given by its facets and edges given by the adjacency relation on facets. By duality, the vertex diameter of a simple polytope $P$ is equal to the facet diameter of $P^\circ$.

\newcommand{\hdist}{\mathrm{hdist}}
We use $\dist(\cdot,\cdot)$ to denote the Euclidean distance between two subsets of $\R^d$, and
$$\hdist(L,K):=\max\left\{\sup_{x\in L}\dist(x,K),\sup_{y\in K}\dist(y,L)\right\}$$
to denote the {\em Hausdorff distance} between two sets.

We use $V(K[j],L[d-j])$ to denote the mixed volume of $j$ copies of $K$ and $d-j$ copies of $L$ for convex bodies $K,L\subset \R^d$. The {\em Alexandrov-Fenchel} inequalities imply that these are log-concave, in the sense that for $j_1,j_2,j=\beta j_1+(1-\beta)j_2$ integers in $\{0,\ldots,d\}$ with $\beta\in [0,1]$ then
\begin{equation}\label{eqn:af} V(K[j],L[d-j])\ge V(K[j_1],L[d-j_1])^\beta \cdot V(K[j_2],L[d-j_2])^{1-\beta}.\end{equation}
We use $C$ to denote absolute constants whose value may change from line to line, unless specified otherwise.

\section{Eigenvalues of the Hessian and Spectral Gaps}
\newcommand{\p}{K}
\newcommand{\slacks}{c}
\newcommand{\A}{M}
In this section, we recall that a certain matrix associated with every bounded polytope has exactly one positive eigenvalue. 
%{\color{red} change $P$ for $K$ and let $K=\{Mx\le c\}$},
%{\color{red} specify polytope is bounded}
\begin{definition}[Formal Hessian]
\label{defn:hessian}
For $K$ a bounded polytope containing the origin in its interior with $N$ facets labeled $\{1,\ldots,N\}$, let $H(K)$ denote the $N\times N$ matrix with entries
\begin{equation}\label{eqn:hdef}
(H(K))_{ij}=\begin{cases}
|F_{ij}|\csc(\theta_{ij}) & i\neq j\\
-\sum_k|F_{ik}|\cot(\theta_{ik}) & i = j
\end{cases}
\end{equation}
where $F_{ij}$ is the intersection of facets $i$ and $j$, and $\theta_{ij}$ is the angle between the vectors normal to those faces, facing away from the origin.
\end{definition}

When $K$ is simple, $H(K)$ is the Hessian of the volume of $\p(\slacks)=\{x\,|\, \A x\le\slacks\}$ with respect to the slack vector $c>0$. (see \cite[Chapter 4]{schneider2014convex}). Log-concavity of the volume implies that this Hessian has exactly one positive eigenvalue. Izmestiev \cite{izmestiev2010colin} has shown via an approximation argument that this remains true for the formal Hessian of any polytope.

\begin{theorem}[Theorem 2.4 of \cite{izmestiev2010colin}]
\label{lem:one_positive_eig}
$H(\p)$ has exactly one positive eigenvalue for any bounded polytope $\p$.
\end{theorem}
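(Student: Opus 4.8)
The plan is to reduce the statement to the case of a simple polytope, where it follows from the log-concavity of volume, and then transfer to a general polytope by an approximation argument. First I would treat the simple case: if $\p(\slacks)=\{x : \A x\le \slacks\}$ is simple with $N$ facets, then $\vol(\p(\slacks))$ is a smooth function of the slack vector $\slacks$ in a neighborhood of the given one (combinatorics is locally constant), and a direct computation (see \cite[Chapter 4]{schneider2014convex}) identifies $H(\p)$ with its Hessian $\grad^2 \vol$. The Brunn--Minkowski inequality says that $t\mapsto \vol(\p(\slacks+t u))^{1/d}$ is concave along any ray, so $\vol^{1/d}$ is a concave function on the cone of slack vectors yielding a combinatorially equivalent polytope; since $\vol>0$ there, $\grad^2(\vol^{1/d})\preceq 0$, and the chain rule $\grad^2(\vol^{1/d}) = \tfrac1d \vol^{1/d-1}\big(\grad^2\vol - \tfrac{d-1}{d}\vol^{-1}\grad\vol\,\grad\vol^T\big)$ shows $\grad^2\vol$ is negative semidefinite on the hyperplane $\{w : \herm{\grad\vol}{w}=0\}$; hence $H(\p)=\grad^2\vol$ has at most one positive eigenvalue. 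That it has \emph{at least} one positive eigenvalue follows because $\vol$ is positively homogeneous of degree $d$, so by Euler's identity $\slacks$ is an eigen-like direction with $\herm{\slacks}{H(\p)\slacks} = d(d-1)\vol(\p)>0$.

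Next I would remove the simplicity assumption. Given an arbitrary bounded polytope $\p$ with $N$ facets and outer normals $u_1,\dots,u_N$, perturb the support numbers (the right-hand sides) slightly to $\slacks^{(\eps)}$ so that the perturbed polytope $\p_\eps$ is simple and combinatorially ``refines'' $\p$: each vertex of $\p$ of high degree splits into several simple vertices, but the facet set and the adjacency structure of facets is preserved for small $\eps$. The key analytic point is that the quantities $|F_{ij}|$ and $\theta_{ij}$ entering \eqref{eqn:hdef} depend continuously on the support numbers (the dihedral angles $\theta_{ij}$ are determined by the fixed normals $u_i,u_j$, so they don't even move; only the $(d-2)$-volumes $|F_{ij}|$ vary continuously, and degenerate $F_{ij}$ simply have $|F_{ij}|\to 0$). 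Therefore $H(\p_\eps)\to H(\p)$ entrywise as $\eps\to 0$. By the simple case each $H(\p_\eps)$ has exactly one positive eigenvalue, and by continuity of eigenvalues $H(\p)$ has \emph{at most} one positive eigenvalue (eigenvalues can only collide, not increase in sign-count, in the limit) and \emph{at least} one, again from the homogeneity/Euler identity $\herm{\slacks}{H(\p)\slacks}>0$ which holds verbatim for the formal Hessian. This pins it to exactly one.

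The main obstacle is the approximation step: one must actually exhibit a perturbation of the right-hand sides that makes the polytope simple while keeping every pair of originally adjacent facets adjacent (so that no entry of $H$ is ``created'' in the limit) and while not creating spurious new facets. This is a genericity argument — a generic small perturbation of the support numbers, keeping normals fixed, yields a simple polytope with the same normal fan refined only at the vertices — but it requires care to check that the facet-adjacency graph of $\p$ embeds into that of $\p_\eps$ and that the off-diagonal entries $|F_{ij}|\csc\theta_{ij}$ converge to the correct limiting value (including $0$ when facets $i,j$ meet only in a lower-dimensional face of $\p$). This is precisely the content of Izmestiev's argument in \cite{izmestiev2010colin}, which we invoke; we have only sketched why the simple case holds and why continuity suffices.
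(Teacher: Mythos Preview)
Your proposal is correct and follows essentially the same route as the paper: establish the simple case via log-concavity of volume, then pass to arbitrary polytopes by perturbing the support numbers to simple position and using continuity of the entries of $H$.

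There is one technical difference worth noting. To ensure the limiting matrix has \emph{at least} one positive eigenvalue, you invoke Euler's identity $\herm{\slacks}{H(\p)\slacks}=d(d-1)\vol(\p)>0$ ``verbatim for the formal Hessian''; strictly speaking this identity is proved for the actual Hessian (simple case) and transfers to the formal one only via the very continuity you have just established, so you should phrase it that way. The paper sidesteps this by working with the rescaled matrix $\tilde H=R^{1/2}HR^{1/2}$, $R=\diag(\slacks_i/|F_i|)$, whose positive eigenvalue in the simple case is shown (citing \cite{cordero2019one}) to be \emph{exactly} $d-1$; since this value is constant along the approximating sequence, it cannot degenerate to zero in the limit, giving ``at least one'' for free. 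Your direct Brunn--Minkowski argument for the simple case is a nice self-contained alternative to that citation.
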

We include a self-contained proof of Theorem \ref{lem:one_positive_eig} in the Appendix for completeness\footnote{Our proof yields a slightly stronger conclusion regarding continuity of the formal Hessian than \cite{izmestiev2010colin}.}.

\newcommand{\HL}{D}
We will apply Theorem \ref{lem:one_positive_eig} to certain matrices derived from the formal Hessian and the following diagonal scaling, which plays an important role in the remainder of the paper.
\begin{definition}
Let $K,F_{ij},\theta_{ij}$ be as in Definition \ref{defn:hessian}.  Then let $D(K)$ denote the $N\times N$ positive diagonal matrix with entries
$(D(K))_{ii}=\sum_kF_{ik}\tan(\theta_{ik}/2)$.
Note that $\theta_{ik}\neq \pi$ whenever $F_{ik}=0$ since parallel facets of a convex polytope cannot intersect.
\end{definition}
\begin{lemma}[Spectral Gaps from Log-Concavity]
\label{lem:spectral_gap}
Let $K$ be a polytope and take $H:=H(K),\HL:=\HL(K)$. Let $L$ be the graph Laplacian with entries:
\begin{equation}\label{eqn:laplacian}
L_{ij}=\begin{cases}
-F_{ij}\csc(\theta_{ij}) & i\neq j\\
\sum_k F_{ik}\csc(\theta_{ij}) & i = j
\end{cases}.
\end{equation}
Then
\begin{enumerate}
\item $\HL^{-1/2}H\HL^{-1/2}$ has exactly one eigenvalue at $1$ with the rest of the eigenvalues in $(-\infty,0]$. The eigenvector corresponding to this eigenvalue is $\HL^{1/2}\mathbf1$.
\item $-\HL^{-1}L$ has exactly one eigenvalue at zero, with the rest of the eigenvalues in $(-\infty,-1]$. The left corresponding to this eigenvalue is $\HL\mathbf1$.
\end{enumerate}
\end{lemma}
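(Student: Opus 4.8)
The plan is to reduce everything to the single algebraic identity $H = D - L$, after which both items follow from Theorem \ref{lem:one_positive_eig} by an inertia argument and a similarity transformation, with the extremal eigenvectors exhibited by hand. To prove $H(K) = D(K) - L$ I would compare entries: for $i\neq j$ the off‑diagonal entry of $D-L$ is $-L_{ij} = |F_{ij}|\csc(\theta_{ij}) = H_{ij}$, so the off‑diagonal parts agree; and on the diagonal (reading the degree term of $L$ as $\sum_k|F_{ik}|\csc(\theta_{ik})$, so that $L$ has vanishing row sums) one gets $(D-L)_{ii} = \sum_k |F_{ik}|\bigl(\tan(\theta_{ik}/2) - \csc(\theta_{ik})\bigr)$, which by the half‑angle identity $\tan(\theta/2) = \tfrac{1-\cos\theta}{\sin\theta} = \csc\theta - \cot\theta$ equals $-\sum_k |F_{ik}|\cot(\theta_{ik}) = H_{ii}$. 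Thus $H = D - L$; I would also record the two facts $L\mathbf 1 = 0$ (graph Laplacian) and $\mathbf 1^\top L = 0$ (symmetry plus $L\mathbf 1=0$).

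For item 1: since $D$ is positive definite and diagonal, $\widetilde H := D^{-1/2} H D^{-1/2}$ is congruent to $H$, so by Sylvester's law of inertia together with Theorem \ref{lem:one_positive_eig}, $\widetilde H$ is a real symmetric matrix with exactly one positive eigenvalue, counted with multiplicity. Using $H = D-L$ and $L\mathbf 1 = 0$ one computes
\[
\widetilde H\,(D^{1/2}\mathbf 1) = D^{-1/2} H \mathbf 1 = D^{-1/2}(D - L)\mathbf 1 = D^{-1/2}D\mathbf 1 = D^{1/2}\mathbf 1,
\]
so $D^{1/2}\mathbf 1$ is an eigenvector of eigenvalue $1>0$. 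As $\widetilde H$ has only one positive eigenvalue, this eigenvalue must be simple and equal to $1$, and all remaining eigenvalues lie in $(-\infty,0]$.

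For item 2: write $-D^{-1}L = D^{-1}(H - D) = D^{-1}H - I$ and note that $D^{-1}H = D^{-1/2}\widetilde H D^{1/2}$ is similar to $\widetilde H$; hence $-D^{-1}L$ is similar to $\widetilde H - I$, whose spectrum is that of $\widetilde H$ shifted down by $1$, i.e. exactly one eigenvalue at $0$ with the rest in $(-\infty,-1]$. The left eigenvector for the eigenvalue $0$ is $D\mathbf 1$, since $(D\mathbf 1)^\top(-D^{-1}L) = -\mathbf 1^\top L = 0$ (and the corresponding right eigenvector is $\mathbf 1$).

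The only step carrying real content is the identity $H = D-L$: the point is that the diagonal weights $\sum_k|F_{ik}|\tan(\theta_{ik}/2)$ defining $D$ are chosen precisely so as to convert the $\cot$'s in the diagonal of $H$ into the $\csc$'s appearing in $L$, which is exactly the half‑angle identity above. After that, everything is a routine application of inertia, similarity, and two one‑line eigenvector checks; the one subtlety worth flagging is that "exactly one positive eigenvalue" in Theorem \ref{lem:one_positive_eig} must be read with multiplicity, so that the claimed eigenvalues $1$ (item 1) and $0$ (item 2) are genuinely simple.
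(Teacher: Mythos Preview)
Your proposal is correct and follows essentially the same approach as the paper: both establish the identity $H=D-L$ via the half-angle formula $\tan(\theta/2)=\csc\theta-\cot\theta$, invoke Sylvester's law of inertia together with Theorem~\ref{lem:one_positive_eig} to get exactly one positive eigenvalue for $D^{-1/2}HD^{-1/2}$, and then deduce item~2 by similarity. The only cosmetic difference is that the paper additionally records $L\succeq 0$ to bound the spectrum of $\widetilde H$ from above by $1$ before identifying the top eigenvector, whereas you verify $\widetilde H(D^{1/2}\mathbf 1)=D^{1/2}\mathbf 1$ directly and let the inertia count do the rest; both routes are equivalent.
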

\begin{proof}
Observe that $H$ is ``nearly'' a graph Laplacian in the sense that:
\begin{equation}H=-L+\HL\end{equation}
where we have used the identity
$\frac{1-\cos{\theta}}{\sin\theta}=\tan(\theta/2)$.
By Sylvester's inertia law, the signature of $H$ matches that of
\begin{equation}\label{eqn:hesslaplacian}\HL^{-1/2}H\HL^{-1/2}=-\HL^{-1/2}L\HL^{-1/2}+I,\end{equation}
which must therefore have exactly one positive eigenvalue by Theorem \ref{lem:one_positive_eig}.
However, $L\succeq 0$ and $L\mathbf1=0$, so by Sylvester's law $-\HL^{-1/2}L\HL^{-1/2}\preceq 0$ with at least one eigenvalue equal to zero. Thus, $\HL^{-1/2}H\HL^{-1/2}$ has exactly one eigenvalue equal to one, with eigenvector $\HL^{1/2}\mathbf1$ and the rest of the eigenvalues nonpositive, establishing the first claim. The second claim follows from \eqref{eqn:hesslaplacian} and the similarity of $\HL^{-1}L$ and $\HL^{-1/2}L\HL^{-1/2}$.
\end{proof}
\section{Diameter in Terms of Angles and Bit Length}\label{sec:q1}
In this section we use the spectral gap bound of Lemma \ref{lem:spectral_gap}(1)
to give a bound on the diameter of a polytope specified by integer constraints.
We begin by slightly generalizing the argument of \cite{sokal1988absence,
chung1989diameters, van1995eigenvalues}, who used Chebyshev polynomials to
control the diameter of regular (nonnegative weighted) graphs in terms of their spectra, to handle the matrix $\HL^{-1/2}H\HL^{-1/2}$ by appropriately controlling its negative entries and top eigenvector.

\begin{lemma}[Diameter in terms of Spectrum]
\label{lem:chebyshev}
Let $A$ be a weighted real symmetric adjacency matrix (possibly with self-loops
	and negative weights) for a graph $G$ on $N$ vertices.  Suppose for some
	$g>0$ there is exactly one eigenvalue of $A$ at $1+g$ with corresponding eigenvector $v$, the smallest absolute entry of which is $v_{\min}$.  Further suppose that the rest of the eigenvalues of $A$ are in the interval $[-1,1]$.  Then the diameter of $G$ is at most

	$$\frac{2\log(2N/v_{\min}^2)}{\sqrt{g}}.$$
\end{lemma}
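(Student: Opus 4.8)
The standard approach (Sokal, Chung–Faber–Krauss, etc.) is: if $u,w$ are two vertices at distance $\ell$ in $G$, then for any polynomial $p$ of degree $< \ell$ we have $(p(A))_{uw} = 0$, since $(A^k)_{uw}=0$ for $k<\ell$. So if we can find a polynomial $p$ of degree $< \ell$ with $p(A)_{uw}\neq 0$, we get a contradiction, forcing $\ell \le \deg p + 1$. The trick is to choose $p$ that is large at the top eigenvalue $1+g$ and small on $[-1,1]$; the Chebyshev polynomials $T_k$ are the extremal choice. The novelty here over the classical case is that $A$ may have negative off-diagonal entries and the top eigenvector $v$ is not constant, so I must control $(p(A))_{uw}$ more carefully via the spectral decomposition rather than via a combinatorial/positivity argument.

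Here is the plan in order. First I would write the spectral decomposition $A = (1+g) \frac{vv^T}{\|v\|^2} + A'$ where $A'$ is supported on the orthogonal complement of $v$ and has spectral norm $\le 1$. For any polynomial $p$ with $p(1)=0$... wait — more cleanly, take $p(x) = T_\ell(x/1) $ suitably; actually the clean choice is to let $q$ be a polynomial of degree $\ell$ normalized so that $|q|\le 1$ on $[-1,1]$ and $q(1+g)$ is as large as possible, which is $q = T_\ell(\cdot)/T_\ell(1+g)$ rescaled so $\|q\|_{[-1,1]}\le 1$; then $q(1+g) = T_\ell(1+g) \ge \frac12(1+\sqrt{2g})^\ell \ge \frac12 e^{\sqrt{g}\,\ell/2}$ using $T_\ell(1+\eps)\ge \frac12(1+\sqrt{2\eps})^\ell$ and $1+\sqrt{2g}\ge e^{\sqrt{2g}/2}\ge e^{\sqrt g/2}$. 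Then
$$(q(A))_{uw} = q(1+g)\frac{v_u v_w}{\|v\|^2} + (q(A'))_{uw}.$$
The first term has absolute value $\ge q(1+g)\, v_{\min}^2/\|v\|^2 \ge q(1+g)\, v_{\min}^2 / (N v_{\max}^2)$. For the second term, $\|q(A')\|\le \|q\|_{[-1,1]}\le 1$, so $|(q(A'))_{uw}|\le 1$... but that's not small enough; I need to normalize by $v$'s entries. Actually the right bound: $|v_u v_w|/\|v\|^2 \ge v_{\min}^2 / (N v_{\max}^2)$, and I want $q(1+g) v_{\min}^2/(Nv_{\max}^2) > 1 \ge |(q(A'))_{uw}|$. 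Hmm, this needs $v_{\max}$. Let me instead normalize $v$ so $v_{\max}=1$ (scale-invariant), giving $\|v\|^2 \le N$, so the first term is $\ge q(1+g) v_{\min}^2/N$; and since the smallest entry in absolute value satisfies $v_{\min}\le 1$, we get the clean target: if $\ell-1 \ge \deg$, i.e. if $G$ has two vertices at distance $\ell$, then choosing $\deg q = \ell - 1$ forces $q(1+g) v_{\min}^2 / N \le 1$, hence $\frac12 e^{\sqrt g (\ell-1)/2} \le N/v_{\min}^2$, hence $\ell - 1 \le \frac{2}{\sqrt g}\log(2N/v_{\min}^2)$, which gives the stated bound $\diam(G)\le \frac{2\log(2N/v_{\min}^2)}{\sqrt g}$ after absorbing the $-1$ (or I just carry it and note $\ell \le \frac{2}{\sqrt g}\log(2N/v_{\min}^2)$ already, since $\frac12 e^{\sqrt g \ell /2}\cdot(\text{stuff})$... — I'd need to recheck whether it's $\ell$ or $\ell-1$ in the exponent depending on whether $\deg q = \ell-1$; the $\frac12$ in $T_\ell(1+g)\ge\frac12(1+\sqrt{2g})^\ell$ is exactly what lets the $2$ and $N$ combine as $2N$).

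The main obstacle — and the one place I need to be careful — is handling the non-constant top eigenvector and the sign-indefiniteness: in the classical argument one uses that all entries of $A^k$ and hence $q(A)$ corresponding to the Perron component are nonnegative, but here I cannot assume that, so I must split off the top eigenspace exactly as above and bound the remainder in operator norm, which is why the hypothesis "all other eigenvalues in $[-1,1]$" (not just $\le 1$) is essential: it makes $\|q(A')\| \le \|q\|_{\infty,[-1,1]}$ via the spectral theorem. I'd also need to double-check the elementary Chebyshev inequality $T_\ell(1+g)\ge \tfrac12(1+\sqrt{2g})^\ell$ (from $T_\ell(\cosh t) = \cosh(\ell t)$ with $\cosh t = 1+g$, so $e^t = 1+g+\sqrt{2g+g^2}\ge 1+\sqrt{2g}$ and $\cosh(\ell t)\ge \tfrac12 e^{\ell t}$) and the scaling normalization of $v$; everything else is routine.
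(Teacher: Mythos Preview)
Your approach is correct and essentially identical to the paper's: both apply $T_k$ to $A$, split off the rank-one top eigenspace via the spectral decomposition, and use $|T_k|\le 1$ on $[-1,1]$ to bound the remainder. Two small clarifications: the paper takes $v$ to be the \emph{unit} eigenvector (so $\|v\|^2=1$ directly, making your $v_{\max}=1$ renormalization---which is not actually ``scale-invariant,'' since $v_{\min}$ changes with the scaling---unnecessary), and it bounds the remainder entrywise by $\sum_{i\ge 2}|u_iu_i^T|\le N$; your operator-norm bound $|(T_k(A'))_{uw}|\le\|T_k(A')\|\le 1$ is in fact tighter, and combined with $\|v\|^2=1$ would yield the stronger conclusion $\diam(G)\le 2\log(2/v_{\min}^2)/\sqrt{g}$ without the extra factor of $N$.
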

\begin{proof}
Note that if $M\in\text{span}(I,A,\cdots,A^{k})$ then $e_i^TMe_j\neq0$ implies
	that there is a path in $G$ from $i$ to $j$ of length at most $k$.
	To this end, consider $T_k(A)$ where $T_k$ is the degree $k$ 
	Chebyshev Polynomial of the first kind.  If we find that $T_k(A)\neq0$ entry-wise, then we can conclude the diameter of $G$ is at most $k$.
Let $$A=vv^T(1+g)+\sum_{i=2}^N u_iu_i^T\lambda_i$$ be the spectral decomposition of $A$.  Let $|\cdot|$ denote the entry-wise absolute value.  Then
\begin{align*}
|T_k(A)-vv^TT_k(1+g)|
=|\sum_{i=2}^N u_iu_i^TT_k(\lambda_i)|
\le\sum_{i=2}^N|u_iu_i^T|\,|T_k(\lambda_i)|
\le\sum_{i=2}^N|u_iu_i^T|
\le N,
\end{align*}
	since $|T_k(x)|\le 1$ on $[-1,1]$.
We would therefore have $T_k(A)\neq 0$ entry-wise if $N$ is smaller then the smallest absolute entry of $vv^TT_k(1+g)$, which is lower bounded by 
	$$v_{\min}^2T_k(1+g)\ge
	\frac{v_{\min}^2}{2}(1+g+\sqrt{(1+g)^2-1})^k\ge\frac{v_{\min}^2}{2}(1+\sqrt{2g})^k.$$ 
	It suffices to pick 
	$$k\ge \frac{\log(2N/v_{\min}^2)}{\log(1+\sqrt{2g})},$$
	which is implied by taking 
	$$k= \frac{2\log(2N/v_{\min}^2)}{\sqrt{g}}$$
	and using
	$\log(1+\sqrt{2g})\ge \sqrt{2g}-g+2\sqrt{2}g^3/3\ge \sqrt{g}/2$ for $g(0,1/2]$; if
	$g\ge 1/2$ we may replace $A$ by $A/2g$ (which does not violate the hypotheses of the Lemma) and reach the desired
	conclusion.

\end{proof}

\begin{theorem}\label{thm:4.4}
Let $P=\{x\in\R^d:Ax\le b\}$ be a bounded polytope containing the origin with
	$m\ge d$, $A\in \Z^{m\times d},b\in\Z^{m}$.
	Assume all angles between pairs of adjacent facets of $P^o$ are
	contained in $[\theta_0, \pi-\theta_0]$ and  let the magnitude of the
	largest $d\times d$ subdeterminant of $[A|b]$ be $\Delta$.
Then the vertex diameter of $P$ is at most
	\[\frac{3d\log m+ O\left(d\log(d\|A\|_\infty\|b\|_\infty\Delta)\right) }{\sin(\theta_0/2)}.\]
\end{theorem}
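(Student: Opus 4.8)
The plan is to combine the spectral gap from Lemma~\ref{lem:spectral_gap}(1) with the Chebyshev diameter bound of Lemma~\ref{lem:chebyshev}, applied to the matrix $A := D(P^\circ)^{-1/2} H(P^\circ) D(P^\circ)^{-1/2}$. By Lemma~\ref{lem:spectral_gap}(1) this matrix has a single eigenvalue at $1$ (so in the notation of Lemma~\ref{lem:chebyshev}, we will need to rescale to move the spectrum into $[-1,1]$ and create a gap $g$), with top eigenvector $v = D(P^\circ)^{1/2}\mathbf{1}$, and all other eigenvalues in $(-\infty,0]$. Since the vertex diameter of $P$ equals the facet diameter of $P^\circ$, and the graph underlying $A$ is exactly the facet-adjacency graph of $P^\circ$, a nonzero $(i,j)$ entry of $T_k(A)$ certifies a facet-path of length $\le k$. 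So the whole theorem reduces to quantitative control of three things: (i) a uniform lower bound on the spectral gap $g$ separating the top eigenvalue from the rest of the spectrum; (ii) a lower bound on $v_{\min}^2 = \min_i (D(P^\circ))_{ii}$ relative to $N \le \binom{m}{d}$ (the number of vertices of $P$, equivalently facets of $P^\circ$); and (iii) the rescaling bookkeeping to fit the hypotheses of Lemma~\ref{lem:chebyshev}.

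For the spectral gap: we know all non-top eigenvalues of $A$ are $\le 0$, so after dividing by an appropriate normalization $\tau \ge 1$ we get a matrix whose top eigenvalue is $(1+\text{something})/\tau$ and the rest lie in $[-1,0]$. Concretely, I would bound the largest entry of $L(P^\circ)$ and of $D(P^\circ)$ in terms of the geometry: each off-diagonal weight is $|F_{ij}|\csc\theta_{ij}$ and each diagonal entry of $D$ is $\sum_k |F_{ik}|\tan(\theta_{ik}/2)$. The ratio controlling the gap will come out as roughly $\sin^2(\theta_0/2)$ after the rescaling, using the hypothesis $\theta_{ij}\in[\theta_0,\pi-\theta_0]$ to bound $\csc$, $\tan(\theta/2)$, and $\cot$ away from their singularities; the key inequality is that $-D^{-1/2}LD^{-1/2}$, which is $\preceq 0$ with kernel spanned by $D^{1/2}\mathbf 1$, has its nonzero eigenvalues bounded away from $0$ by $\gtrsim \sin^2(\theta_0/2)$ because each diagonal entry of $D$ is at most a $\cot^2$-type factor times the corresponding diagonal entry of $L$. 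Plugging $g \asymp \sin^2(\theta_0/2)$ into Lemma~\ref{lem:chebyshev} produces the $1/\sin(\theta_0/2)$ factor in the statement.

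For the entrywise lower bound on $v$: here is where the integrality enters. We need $\min_i (D(P^\circ))_{ii} = \min_i \sum_k |F_{ik}| \tan(\theta_{ik}/2)$ to be bounded below by $1/\poly(m, d, \|A\|_\infty, \|b\|_\infty, \Delta)$. The volumes $|F_{ik}|$ of the $(d-2)$-faces of $P^\circ$ and the angles $\theta_{ik}$ are determined by the vertices $b_j^{-1}a_j$ of $P^\circ$, all of which have rational coordinates with numerators and denominators polynomially bounded in terms of $\|A\|_\infty, \|b\|_\infty$; hence any nonzero such volume, being a determinant-like quantity in these coordinates, is at least $1/\poly$, and $\tan(\theta_{ik}/2) \ge \tan(\theta_0/2)$ on the relevant faces. (The diagonal of $D$ is a sum over at least one genuinely adjacent facet $k$, so it is a sum of at least one positive term of this controlled size.) Taking logarithms, $\log(2N/v_{\min}^2) \le 3d\log m + O(d\log(d\|A\|_\infty\|b\|_\infty\Delta))$, where the $3d\log m$ absorbs $\log N \le \log\binom{m}{d}$ and the $O(d\log(\cdots))$ term absorbs the $1/v_{\min}^2$ contribution; matching this against the Chebyshev bound $2\log(2N/v_{\min}^2)/\sqrt{g}$ with $\sqrt g \asymp \sin(\theta_0/2)$ gives exactly the claimed formula.

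I expect the main obstacle to be step (iii)-into-(i): getting the constant in front of $3d\log m$ to be exactly $3$ rather than some larger absolute constant, and cleanly threading the rescaling so that the gap $g$ that appears in Lemma~\ref{lem:chebyshev} really is $\Theta(\sin^2(\theta_0/2))$ and not, say, $\Theta(\sin^2(\theta_0/2)/\poly(m,d))$ — the latter would blow up the bound. This requires being careful that the normalization constant $\tau$ by which we divide $A$ to push the bulk spectrum into $[-1,1]$ is itself only a bounded multiple of the gap numerator, i.e. that $\|D^{-1/2}LD^{-1/2}\|$ and the top eigenvalue $1$ of $A$ are comparable up to a factor depending only on $\theta_0$. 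A secondary technical point is verifying the claimed denominator bound on the face volumes $|F_{ik}|$ carefully — one must argue that the $(d-2)$-dimensional volume of a face of $P^\circ$, when nonzero, cannot be too small, which follows from writing it via a Cayley–Menger or Gram determinant in the rational vertex coordinates and lower-bounding a nonzero integer-ish determinant after clearing denominators.
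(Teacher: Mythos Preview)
Your overall strategy matches the paper's: apply Lemma~\ref{lem:spectral_gap}(1) to $D^{-1/2}HD^{-1/2}$, normalize to fit Lemma~\ref{lem:chebyshev}, then control $v_{\min}$ via integrality (the paper packages this last step as Lemma~\ref{lem:ratio_of_volumes}). However, your description of step~(i) is confused in a way worth flagging.

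You write that the key is that the nonzero eigenvalues of $-D^{-1/2}LD^{-1/2}$ are ``bounded away from $0$ by $\gtrsim \sin^2(\theta_0/2)$.'' That is the wrong inequality: by Lemma~\ref{lem:spectral_gap}(2) those eigenvalues are already $\le -1$, so there is nothing to prove there and it is not where $\sin^2(\theta_0/2)$ enters. The actual issue --- which you do name correctly in your ``obstacle'' paragraph but not in the main argument --- is an \emph{upper} bound on $\|D^{-1/2}LD^{-1/2}\|$, equivalently a \emph{lower} bound on the bottom eigenvalue of $D^{-1/2}HD^{-1/2}$. The paper gets this by Gershgorin on the similar matrix $D^{-1}H$: the $i$th absolute row sum is at most
\[
\frac{2\sum_k |F_{ik}|\csc\theta_{ik}}{\sum_k |F_{ik}|\tan(\theta_{ik}/2)}
\;\le\; \sup_{k}\frac{2\csc\theta_{ik}}{\tan(\theta_{ik}/2)} = \sup_k \csc^2(\theta_{ik}/2)\le \csc^2(\theta_0/2).
\]
One then \emph{shifts} by $+\csc^2(\theta_0/2)I$ and divides by $\csc^2(\theta_0/2)$ (a pure scaling, as you propose, does not produce the form $1+g$ required by Lemma~\ref{lem:chebyshev}); this lands the bulk in $[0,1]$ with the top at $1+\sin^2(\theta_0/2)$, whence $g=\sin^2(\theta_0/2)$.

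A smaller correction for step~(ii): $v$ is the \emph{unit} eigenvector, so $v_{\min}$ is not $\sqrt{\min_i D_{ii}}$ but $\sqrt{\min_i D_{ii}}/\|D^{1/2}\mathbf 1\|$. You therefore need both a lower bound on $\min_i D_{ii}$ and an upper bound on $\max_i D_{ii}$; the latter uses an upper bound on $(d-2)$-face volumes (containment of $P^\circ$ in a ball) and $\tan(\theta_{ik}/2)\le 1/\sin(\theta_0/2)$ from the hypothesis $\theta_{ik}\le \pi-\theta_0$. Both directions are easy but the upper half is missing from your sketch.
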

\begin{proof}
Put $\HL:=\HL(P^o)$ and $H:=H(P^o)$.  By Lemma \ref{lem:spectral_gap}, $\HL^{-1/2}H\HL^{-1/2}$ is real symmetric with one eigenvalue at $1$ and the rest at most $0$. We can bound its smallest eigenvalue by using Lemma \ref{lem:gershgorin} and considering the similar matrix $\HL^{-1}H$.  We upper bound the absolute row sum of the $i$th row of $\HL^{-1}H$ by
\[
\sum_j|(\HL^{-1}H)_{ij}|
\le
\frac
{\sum_{i\sim j}F_{ij}\csc(\theta_{ij})}
{\sum_{i\sim k}F_{ik}\tan(\theta_{ik}/2)}
+
\frac
{\sum_{i\sim j}F_{ij}|\cot(\theta_{ij})|}
{\sum_{i\sim k}F_{ik}\tan(\theta_{ik}/2)}
\le
\frac
{2\sum_{i\sim j}F_{ij}\csc(\theta_{ij})}
{\sum_{i\sim k}F_{ik}\tan(\theta_{ik}/2)}.
\]
Taking the supremum of the above expression gives
\[
\sup_i
\frac
{2\sum_kF_{ik}\csc(\theta_{ik})}
{\sum_kF_{ik}\tan(\theta_{ik}/2)}
\le
\sup_{i\sim j}
\frac
{2\csc(\theta_{ij})}
{\tan(\theta_{ij}/2)}
=
\sup_{i\sim j}
\csc^2(\theta_{ij}/2).
\]
Therefore by Lemma \ref{lem:gershgorin}, the smallest eigenvalue of $\HL^{-1}H$, and consequently of $\HL^{-1/2}H\HL^{-1/2}$ is at least $-\csc^2(\theta_0/2)$.  Then
\[
M=\frac
{\HL^{-1/2}H\HL^{-1/2}+\csc^2(\theta_0/2)}
{\csc^2(\theta_0/2)}I
\]
has exactly one eigenvalue at $\frac
{1+\csc^2(\theta_0/2)}
{\csc^2(\theta_0/2)}=1+\csc^{-2}(\theta_0/2)$ with the rest contained in the
	interval $[0,1]$.  We can apply Lemma \ref{lem:chebyshev} with
	$g=\csc^{-2}(\theta_0/2)$ to obtain a diameter of at most:
\[
	\frac{d\log(m)+ 2\log(v_{\min}^{-1})}{\sin(\theta_0/2)}.
\]
The eigenvector $v$ corresponding to eigenvalue $1+\csc^2(\theta_0/2)$ is simply $\mathbf{1}^T\HL^{1/2}$ normalized, so
\[
v_{\min}
=\frac{\min_i(\mathbf1^T\sqrt{\HL})_i}{||\mathbf1^T\sqrt{\HL}||_{2}}
\ge\frac1{\sqrt{N}}\sqrt{\frac{\min_i\HL_{ii}}{\max_i\HL_{ii}}}
\ge\frac1{\sqrt N}\frac
{\min_i\sum_kF_{ik}\tan(\theta_{ik}/2)}
{\max_i\sum_kF_{ik}\tan(\theta_{ik}/2)}
	\ge\frac{\sin^2(\theta_0)}{4N^{3/2}}\frac{\min_{i,j}F_{ij}}{\max_{i,j}F_{ij}},
\]
	where we used that $\theta_{ik}\in [\theta_0,\pi-\theta_0]$ implies $\sin(\theta_0/2)\le \tan(\theta_{ik}/2)\le
	\frac{1}{\sin(\theta_0/2)}$ and $\sin(x/2)\ge \sin(x)/2$.
	Finally use $N\le \binom{m}{d}\le  m^d/2$  as well as the estimates from
	Lemma \ref{lem:ratio_of_volumes} to see that
	\begin{align*}
		\log (v_{\min}^{-1}) &\le \log(4N^{3/2}) + 2\log(2d\|A\|_\infty
	\Delta)+d\log(\sqrt{d}\|A\|_\infty)+d\log d+2d\log(\|b\|_\infty)\\
		&\le 2d\log(m)+ O(d\log(d\|A\|_\infty\|b\|_\infty) +
		\log(\Delta)),
	\end{align*}
yielding the advertised bound.
\end{proof}

\begin{lemma}[Gershgorin's circle theorem]
\label{lem:gershgorin}
%Let $M$ be a matrix with an all-real spectrum and
%\[a\ge\sup_i|M_{ii}|\quad b\ge\sup_i\sum_{i\neq j}|M_{ij}|.\]
%Then the smallest eigenvalue of $M$ is at least $-a-b$.
The smallest (real) eigenvalue of $M$ is at least $-\sup_i\sum_j|M_{ij}|$.
\end{lemma}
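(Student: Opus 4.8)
The plan is to run the standard eigenvector argument behind Gershgorin's circle theorem, keeping only the one-sided lower bound we need. First I would let $\lambda$ be a real eigenvalue of $M$ achieving the minimum among the real eigenvalues (for symmetric $M$ this is simply the smallest eigenvalue, and in general the bound we derive holds for \emph{every} real eigenvalue, hence for the smallest), and fix a corresponding eigenvector $v\neq 0$. Then I would choose an index $i$ at which $|v_i|$ is maximal; since $v\neq 0$ we have $|v_i|>0$, which is what makes the division below legitimate.

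Reading off the $i$-th coordinate of the identity $Mv=\lambda v$ gives $\sum_j M_{ij}v_j=\lambda v_i$, which I would rearrange as $(\lambda-M_{ii})v_i=\sum_{j\neq i}M_{ij}v_j$. Taking absolute values and using $|v_j|\le|v_i|$ for all $j$ yields $|\lambda-M_{ii}|\,|v_i|\le\sum_{j\neq i}|M_{ij}|\,|v_j|\le|v_i|\sum_{j\neq i}|M_{ij}|$, and dividing by $|v_i|$ gives $|\lambda-M_{ii}|\le\sum_{j\neq i}|M_{ij}|$. In particular $\lambda\ge M_{ii}-\sum_{j\neq i}|M_{ij}|\ge -|M_{ii}|-\sum_{j\neq i}|M_{ij}|=-\sum_j|M_{ij}|\ge-\sup_i\sum_j|M_{ij}|$, which is exactly the advertised inequality.

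There is no real obstacle here, as this is a classical fact; the only points requiring a little care are the selection of the maximal-coordinate index (so that the step dividing by $|v_i|$ is valid) and the bound $M_{ii}\ge-|M_{ii}|$, which is the reason the statement uses the full absolute row sum $\sum_j|M_{ij}|$ (including the diagonal term) rather than the tighter $M_{ii}-\sum_{j\neq i}|M_{ij}|$. This slack is harmless for the use in Theorem \ref{thm:4.4}, where the quantity being bounded is already the absolute row sum of $\HL^{-1}H$.
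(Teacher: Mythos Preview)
Your proof is correct and is exactly the standard argument for Gershgorin's circle theorem. The paper itself does not supply a proof of this lemma at all---it is simply stated as a classical fact---so there is nothing to compare.
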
 

\begin{lemma}[Worst Case Volumes and Angles]
\label{lem:ratio_of_volumes}
Let $P^o=\conv(a_1/b_1,\cdots,a_m/b_m)$ be a polytope where each
	$a_i/b_i\in\R^d$ is a vertex and $a_i\in\Z^d,b_i\in\Z$.
Then:
\begin{enumerate}
    \item 
	    The smallest co-dimension $2$ face of $P^o$ has volume at least
		$1/(d!\|b\|_\infty^{2d})$, and the largest co-dimension $2$ face
		has volume at most $(\sqrt{d}\|A\|_\infty)^d$. 
	\item If the largest $d\times d$ minor of $[A|b]$ is bounded in
		magnitude by $\Delta$, then the angle between any two
		adjacent facets of $P^o$ satisfies
		$\csc(\theta)\le 2d\Delta\|A\|_\infty.$
\end{enumerate}
\end{lemma}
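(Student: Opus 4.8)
The plan is to treat the two parts separately, in each case by clearing denominators and invoking integrality.

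\textbf{Part 1.}
A codimension-$2$ face $F$ of $P^o$ is $(d-2)$-dimensional, so it is the convex hull of the vertices $a_i/b_i$ lying on it and in particular contains $d-1$ affinely independent such vertices $a_{i_0}/b_{i_0},\dots,a_{i_{d-2}}/b_{i_{d-2}}$, which span a $(d-2)$-simplex $\sigma\subseteq F$; thus $\vol_{d-2}(F)\ge\vol_{d-2}(\sigma)=\tfrac1{(d-2)!}\sqrt{\det G}$, where $G$ is the Gram matrix of the edge vectors $a_{i_\ell}/b_{i_\ell}-a_{i_0}/b_{i_0}$. Scaling the $\ell$-th edge vector by the integer $b_{i_\ell}b_{i_0}$ makes it the integer vector $b_{i_0}a_{i_\ell}-b_{i_\ell}a_{i_0}$; writing these as the rows of a full-rank matrix $\widetilde W\in\Z^{(d-2)\times d}$, the Cauchy--Binet formula gives $\det(\widetilde W\widetilde W^T)=\sum_S(\det\widetilde W_S)^2\ge1$ since some maximal minor is a nonzero integer. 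Undoing the scaling yields $\det G\ge\|b\|_\infty^{-4(d-2)}$, hence $\vol_{d-2}(F)\ge\tfrac1{(d-2)!\,\|b\|_\infty^{2(d-2)}}\ge\tfrac1{d!\,\|b\|_\infty^{2d}}$ (using $\|b\|_\infty\ge1$). For the upper bound, each vertex satisfies $\|a_i/b_i\|_2\le\|a_i\|_2\le\sqrt d\,\|A\|_\infty$ since $|b_i|\ge1$, so $F$ lies in a $(d-2)$-dimensional Euclidean ball of radius $\sqrt d\,\|A\|_\infty$, and monotonicity of volume gives $\vol_{d-2}(F)\le\omega_{d-2}\,(\sqrt d\,\|A\|_\infty)^{d-2}\le(\sqrt d\,\|A\|_\infty)^d$, where $\omega_{d-2}\le d$ is the volume of the unit $(d-2)$-ball and we used $\|A\|_\infty\ge1$.

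\textbf{Part 2.}
The first step is to express the dihedral angle as a planar angle at the origin. Two adjacent facets $i,j$ of $P^o$ are dual to two vertices $v_i,v_j$ of $P$ joined by an edge; the supporting hyperplane of facet $i$ is $\{y:\langle v_i,y\rangle=1\}$ with outer normal pointing in the direction of $v_i$, so $\theta=\angle(v_i,v_j)$ and
\[
\csc\theta=\frac{\|v_i\|\,\|v_j\|}{\|v_i\wedge v_j\|}=\frac{\|v_j\|}{\dist(v_j,\R v_i)}.
\]
By Cramer's rule, $v_j$ solves $A_{I_j}v_j=b_{I_j}$ for $d$ (affinely independent) tight constraints, so each of its coordinates is a $d\times d$ minor of $[A\mid b]$ over the nonzero integer $\det A_{I_j}$; hence $\|v_j\|_2\le\sqrt d\,\Delta$, and likewise $\|v_i\|_2\le\sqrt d\,\Delta$. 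For the denominator, let $S$ be a set of $d-1$ independent constraints tight along the edge and $p$ a constraint tight only at $v_i$: the edge lies on the line $\aff e_{ij}=\{y:A_Sy=b_S\}$, whose integer direction vector $\hat w$ (with coordinates the $(d-1)\times(d-1)$ minors of $A_S$, up to sign) satisfies $a_p^T\hat w=\pm\det A_{S\cup\{p\}}$, and $v_i-v_j$ is a scalar multiple of $\hat w$. Then $\|v_i\wedge v_j\|=\|v_i-v_j\|\cdot\dist(0,\aff e_{ij})$, where $\|v_i-v_j\|\ge s_p/\|a_p\|_2\ge s_p/(\sqrt d\,\|A\|_\infty)$ with $s_p=b_p-a_p^Tv_j=a_p^T(v_i-v_j)>0$ a positive integer over $\det A_{I_j}$, and $\dist(0,\aff e_{ij})\ge b_k/\|a_k\|_2\ge1/(\sqrt d\,\|A\|_\infty)$ for any $k\in S$ (since the origin is strictly inside $P$, so $b_k\ge1$). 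Clearing all denominators and tracking how the determinants $\det A_{I_i},\det A_{I_j},\det A_{S\cup\{p\}},\det A_{S\cup\{q\}}$ cancel --- equivalently, replacing $v_i,v_j$ by their primitive integer multiples $\bar v_i,\bar v_j$ so that $\csc\theta=\|\bar v_i\|\,\|\bar v_j\|/\|\bar v_i\wedge\bar v_j\|$ --- one assembles these estimates into $\csc\theta\le 2d\,\Delta\,\|A\|_\infty$.

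\textbf{The main obstacle} is precisely this last assembly. The crude bounds $\|\bar v_i\|,\|\bar v_j\|\le\sqrt d\,\Delta$ and $\|\bar v_i\wedge\bar v_j\|\ge1$ give only $\csc\theta\le d\Delta^2$, which is off from the target by a factor of roughly $\Delta/\|A\|_\infty$ and hence too weak for Theorem~\ref{thm:q1}. Recovering the sharp, linear-in-$\Delta$ estimate requires using that $\csc\theta$ is large only when $v_i$ and $v_j$ are nearly parallel to each other and to the edge direction $\hat w$; in that regime the two linear forms $t\mapsto A_S(v_j-tv_i)=(1-t)b_S$ and $t\mapsto a_p^T(v_j-tv_i)=(1-t)b_p-s_p$ cannot both be small on the line $\R v_i$, which --- together with the integrality of $s_p$ and the subdeterminant bound on $\|v_j\|$ --- should pin $\dist(v_j,\R v_i)$ from below by $\Omega(1/(\sqrt d\,\|A\|_\infty))$ relative to $\|v_j\|$. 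Making this dichotomy quantitative without losing an extra power of $\Delta$ (or a factor of $\|b\|_\infty$) in the denominator estimates is the delicate part; pinning down the precise constant $2d$ further requires care with the operator-norm and Hadamard bounds used.
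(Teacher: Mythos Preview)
Your Part~1 is correct and essentially the same as the paper's argument (clear denominators, use that a full-rank integer matrix has Gram determinant at least $1$, and bound the face by the ambient ball for the upper bound).

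Your Part~2 is incomplete, as you yourself acknowledge: the dual-vertex approach via $\csc\theta=\|v_i\|\|v_j\|/\|v_i\wedge v_j\|$ naturally produces $d\Delta^2$, and the ``dichotomy'' you sketch to recover the lost factor of $\Delta/\|A\|_\infty$ is not carried out. The paper sidesteps this obstacle entirely by working in $P^o$ rather than in $P$. Writing the two adjacent facets as $F=\conv(a_1/b_1,\dots,a_d/b_d)$ and $F'=\conv(a_2/b_2,\dots,a_d/b_d,a_j/b_j)$ with $|b_j|\le|b_1|$, one has the elementary geometric identity
\[
\csc\theta=\frac{\dist(a_j/b_j,\,F)}{\dist(a_j/b_j,\,\aff(F))}.
\]
The numerator is at most $\|a_1/b_1-a_j/b_j\|\le 2\sqrt d\,\|A\|_\infty/|b_j|$. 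For the denominator, lift to $\R^{d+1}$ via $\hat a_i:=(a_i,b_i)^T\in\Z^{d+1}$; then $\dist(a_j/b_j,\aff(F))\ge|b_j|^{-1}\dist(\hat a_j,\spn(\hat a_1,\dots,\hat a_d))=(|b_j|\,\|e^T M^{-1}\|)^{-1}$, where $M=[\hat a_1\cdots\hat a_d\;\hat a_j]\in\Z^{(d+1)\times(d+1)}$ is nonsingular. By the adjugate formula and $|\det M|\ge 1$, the entries of $M^{-1}$ are $d\times d$ minors of $[A|b]$ and hence bounded by $\Delta$, so $\dist(a_j/b_j,\aff(F))\ge 1/(\sqrt d\,|b_j|\,\Delta)$. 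The factors of $|b_j|$ cancel, yielding $\csc\theta\le 2d\,\Delta\,\|A\|_\infty$ directly.

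The point you were missing is that passing to the \emph{homogeneous integer coordinates} $(a_i,b_i)$ makes the $d\times d$ minors of $[A|b]$ appear exactly once, as cofactors of a single integer matrix, rather than as Cramer-rule denominators in both $v_i$ and $v_j$ that refuse to cancel.
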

\begin{proof}
Every co-dimension $2$ face can be written as the convex hull of some subset of size at least $d-1$ of the vertices $a_1/b_1,\cdots,a_m/b_m$.  Without loss of generality, say that $F=\conv(a_1/b_1,\cdots,a_{d-1}/b_{d-1})$ is the smallest co-dimension $2$ face.  Then its volume is:
\[\vol(\conv(a_1/b_1,\cdots,a_{d-1}/b_{d-1}))
=
\frac1{(d-2)!}\sqrt{|\det (M^TM)|}\ge \frac{1}{d!(b_1\ldots
	b_{d-2})b_{d-1}^{d-2}}\ge \frac{1}{d!\|b\|_\infty^{2d}},
\]
where $M$ is the $d\times (d-2)$ matrix whose $i$th column is $a_i/b_i-a_{d-1}/b_{d-1}$, and we have used that the determinant of a nonsingular integer matrix is at least one. 
On the other hand, $P^o$ is contained inside the $\ell_{2}$ ball of radius
	$d^{1/2}\|A\|_\infty$, and so each co-dimension $2$ face of $P^o$ is
	contained in a cross section of that ball, and consequently has volume
	at most $(\sqrt{d}\|A\|_\infty)^{d}$, establishing (1).

Regarding the angles,  consider without loss of generality two adjacent facets
	of $P^o$, with vertices numbered so that 
	$F=\conv(a_1/b_1,\ldots,a_d/b_d)$ and 
	$F'=\conv(a_2/b_2,\ldots,a_d/b_d,a_j/b_j)$, $j>d$, and $|b_j|\le |b_1|$.
 Observe that the angle $\theta$ between the normals to these adjacent facets satisfies:
$$ \csc(\theta) = \frac{\dist(a_j/b_j,F)}{\dist(a_j/b_j,\aff(F))}.$$
The numerator is at most the distance between $a_j/b_j$ and any vertex of $F$, which is at most 
	$$\|a_1/b_1-a_j/b_j\|_2\le
	\sqrt{d}\frac{\|a_1\|_\infty+\|a_j\|_\infty}{|b_j|}\le
	\frac{2\sqrt{d}\|A\|_\infty}{|b_j|},$$
	by our choice of $j$.
The denominator is given by 
\begin{align*}
	\dist(a_j/b_j,\aff(F)) &\ge \dist((a_j/b_j,1)^T
	,\mathrm{span}(\hat{a_1},\ldots\hat{a_d}))\quad\textrm{where
	$\hat{a_i}:=(a_i, b_i)^T\in\R^{d+1}$}\\
	&=\frac{1}{|b_j|\|e_j^TM^{-1}\|}
\end{align*}
	where $M$ is the $(d+1)\times (d+1)$ matrix with columns
	$(\hat{a_1},\ldots,\hat{a_d},\hat{a_j})$, which must be invertible since $\conv(a_1/b_1,\ldots,a_d/b_d,a_j/b_j)$ is a full dimensional simplex. By the adjugate formula, the entries of $M^{-1}$ are of magnitude at most $\Delta$, so we have
$$\dist(a_j,\aff(F))\ge \frac{1}{\sqrt{d}|b_j|\Delta}.$$
Combining these bounds and cancelling the $|b_j|$ yields 
	$$\csc(\theta)\le 2d\|A\|_\infty \Delta,$$ establishing (3).

\end{proof}

Finally, we can prove the bound advertised in the introduction.
\begin{proof}[Proof of Theorem \ref{thm:q1}]
	Applying Theorem \ref{thm:4.4}, Lemma \ref{lem:ratio_of_volumes}(2), and
	$\sin(x/2)\ge \sin(x)/2$, we find that the diameter of $P$ is at most 
	$$ O(d\log (m\|A\|_\infty \|b\|_\infty \Delta)\times
	d\|A\|_\infty\Delta,$$
	as advertised.
\end{proof}
\section{Smoothed Analysis}\label{sec:smoothed}
\newcommand{\all}{\binom{[m]}{d}}
\newcommand{\B}{\mathcal{B}}
\newcommand{\Ce}{\mathcal{C}}
\renewcommand{\L}{\mathcal{L}}
\newcommand{\sa}[1]{\mathrm{Vol}_{d-1}(\partial #1)}
\newcommand{\ghat}{\hat{\mathbf{g}}}
\newcommand{\Se}{S_\epsilon}
\newcommand{\one}{\mathbf{1}}

In this section we consider the  ``smoothed unit LP'' model defined in \eqref{eqn:smoothedlp}.
Suppose $P_0$ is a fixed polytope specified as $$P_0=\{x\in\R^d: \langle a_j,x\rangle \le 1, j\in [m]\},$$
for some vectors $\|a_j\|\le 1$, 
and consider the random polytope 
$$P = \{x\in\R^d: \langle v_j,x\rangle \le 1\},$$
where $v_j=a_j+g_j$ for $g_j\sim N(0,\sigma^2 I_d)$ i.i.d spherical Gaussians.  Denote the polars of $P_0$ and $P$ by
$$ K_0:=P_0^\circ = \conv(a_1,\ldots,a_m)\subset B_2^d,$$
$$K:=P^\circ = \conv(v_1,\ldots,v_m).$$

Note that $K$ is simplicial with probability one, so each of its $k$-dimensional faces has exactly $k+1$ vertices. We will use the notation
 $F_S :=\conv\{v_j:j\in S\}$
to denote faces of $K$ and
$\facets_k(K):=\left\{S\in \binom{[m]}{k+1}: \textrm{$F_S$ is a $k$-dimensional face of $K$}\right\}$ to denote the set of all faces of $K$.
The $k$-dimensional volume of a face $F_S, S\in\facets_k(K)$ will be denoted by $|F_S|$ or $\vol_k(F_S)$.
We will often abbreviate $F_{S\cap T}$ as $F_{ST}$ for adjacent $S,T$.
 For two $S,T\in \all$, let $\theta_{ST}\in (0,\pi)$ denote the angle between the unit normals $u_S, u_T$ to $F_S, F_T$, respectively; note that almost surely $\theta_{ST}\neq 0,\pi$ for every $S,T\in\all$. 

We will pay special attention to the set of $(d-1)$-faces of $K$, which we denote as
$$\Omega:=\facets_{d-1}(K)\subset \all.$$
Define the measures $\qu, \pi, \delta:\Omega\rightarrow\R_{\ge 0}$ by 
\begin{equation}\label{eqn:meancurvature} 
\qu(S):= \sum_{T\in\Omega} |F_{ST}|\theta_{ST},\end{equation}
\begin{equation}\label{eqn:pi} \pi(S):=\sum_{T\in\Omega} |F_{ST}|\tan(\theta_{ST}/2),\end{equation}
\begin{equation}\label{eqn:delta} \delta(S):= \sum_{T\in\Omega} |F_{ST}|\csc(\theta_{ST}).\end{equation}

It will be convenient to make two further {\bf technical assumptions} on $K_0$ and $\sigma$ for the proofs of our results; in Section \ref{sec:roundedness} we will show that any instance of the smoothed unit LP model may be reduced to one satisfying both assumptions with parameter
\begin{equation}\label{eqn:rvalue}
r = \Omega(\sigma m^3),
\end{equation}
incurring only a $\poly(m)$ loss in the diameter.
Let $K_0^{(j)}=\conv(a_i:i\neq j)$ be the polytope obtained from $K_0$ by deleting vertex $a_j$.
\newcommand{\aS}{\bf{(S)}}
\newcommand{\aR}{\bf{(R)}}
\begin{itemize}
    \item[\aR] Roundedness of Subpolytopes: There is an $r\in (0,1)$ such that for every $j\le m$:
    $$ rB_2^d\subset K_0^{(j)}.$$
    \item[\aS] Smallness of $\sigma$:
    \begin{equation}\label{eqn:alphadef} \alpha:= 6\sigma\sqrt{d\log m} < r/d^2.\end{equation}
\end{itemize}

The main result of this section is the following ``almost-diameter'' bound with respect to the measure $\pi$.
\begin{theorem}\label{thm:5.15} Assume $\aS, \aR$. Then with probability at least $1-1/\log(m)$, for every $\phi>0$ there is a subset $G:=G(\phi)\subset\Omega$ with $\pi(G)\ge (1-\phi)\pi(\Omega)$ such that the facet diameter of $G$ is at most $ \tilde{O}(m^3d^8/\sigma^2r\phi).$
\end{theorem}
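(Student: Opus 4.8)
The plan is to introduce a continuous-time Markov chain on the state space $\Omega = \facets_{d-1}(K)$, apply Lemma \ref{lem:spectral_gap}(2) to the associated operator to get an automatic spectral gap, and then convert the spectral gap into a diameter bound for a large-measure subset. Concretely, I would consider the generator $-\HL^{-1}L$ where $H = H(P)$, $\HL = \HL(P)$, $L$ is the Laplacian from \eqref{eqn:laplacian}, so that the off-diagonal rates are proportional to $|F_{ST}|\csc(\theta_{ST})$ and the diagonal normalization is $\HL_{SS} = \sum_T |F_{ST}|\tan(\theta_{ST}/2) = \pi(S)$. By Lemma \ref{lem:spectral_gap}(2), this generator has a spectral gap of $1$ (all nonzero eigenvalues $\le -1$) with stationary measure $\pi$ (the left eigenvector $\HL\one$). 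Thus the chain mixes in $O(1)$ continuous time, \emph{regardless} of the geometry of $K$ — the only quantity that depends on the smoothed model is the overall \emph{time scale}, governed by the total jump rate out of a typical state, namely $\delta(S)/\pi(S)$.

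The key translation step is: if a reversible continuous-time chain with stationary measure $\pi$ has spectral gap $\lambda$, then for any $\phi>0$ there is a set $G$ with $\pi(G)\ge (1-\phi)\pi(\Omega)$ on which every pair of states is connected by a path (in the transition graph) of combinatorial length $O\big(\tfrac{1}{\lambda\phi}\cdot \sup_S (\text{jump rate at } S)\big)$, or something of this flavor. The mechanism is standard: states that are far (in graph distance) from a fixed root $S_0$ can only be reached through many low-rate transitions, so their $\pi$-mass, weighted appropriately, must be small; one removes a $\phi$-fraction of the mass to kill the ``slow tail'' and keeps the ``giant component'' $G$. Since the gap is $1$ here, the diameter bound reduces to controlling $\sup_{S}\delta(S)/\pi(S)$, i.e.\ how much larger $\csc(\theta_{ST})$ can be than $\tan(\theta_{ST}/2)$ on the faces that carry most of the measure — equivalently, a lower bound on the angles $\theta_{ST}$ and a comparison of face volumes, valid with high probability and after throwing away a small-measure bad set.

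The main work — and the main obstacle — is therefore bounding the average jump rate $\E_\pi[\delta(S)/\pi(S)]$ (or a suitable high-probability surrogate) by $\tilde O(m^3 d^8/\sigma^2 r)$ under assumptions \aR, \aS. This is exactly the content the excerpt defers to Sections \ref{sec:jumprate}--\ref{sec:quadrature}: one needs (i) Gaussian anticoncentration to say that, with high probability over the perturbation, no pair of adjacent facets of $K$ is nearly parallel, giving a polynomial lower bound on $\theta_{ST}$ and hence $\csc(\theta_{ST})\lesssim \poly(m,d)/\sigma r \cdot \tan(\theta_{ST}/2)$ on a set of large $\pi$-measure; (ii) Alexandrov--Fenchel/mixed-volume inequalities \eqref{eqn:af} to compare the sizes of the various $(d-2)$-faces $|F_{ST}|$ appearing in $\delta$ and $\pi$, so that the ratio $\delta(S)/\pi(S)$ is controlled on average rather than just pointwise; and (iii) the shadow-vertex bound of \cite{dadush2020friendly} to control the expected number of faces (equivalently, the combinatorial complexity of $K$) entering into these sums. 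I would organize the argument so that the spectral/Markov-chain reduction in this section is clean and short, isolating all model-specific estimates into the cited later sections, and then simply assemble: spectral gap $=1$ from Lemma \ref{lem:spectral_gap}(2), plus average jump rate $\tilde O(m^3 d^8/\sigma^2 r)$, plus the gap-to-diameter conversion with the $1/\phi$ loss, yields the claimed $\tilde O(m^3 d^8/\sigma^2 r\phi)$ facet diameter of $G$.
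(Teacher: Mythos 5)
Your overall strategy is the same as the paper's: the generator $Q=-D^{-1}L$, the automatic spectral gap from Lemma \ref{lem:spectral_gap}(2), and a reduction of everything model-specific to the average jump rate bound of Lemma \ref{lem:polyjump}. But the step you leave as ``standard \dots or something of this flavor'' --- converting the gap plus a jump-rate bound into an almost-diameter bound --- is precisely the content of this proof, and two of your claims about it are off. First, the chain does \emph{not} mix in $O(1)$ continuous time from the relevant starting distribution: to produce paths one must start from a point mass at a single facet $F_0$, which is only an $M$-warm start with $M=\overline{\pi}_{\min}^{-1}$, so Lemma \ref{lem:mixing} gives mixing time $T=O(\log(1/\overline{\pi}_{\min}))=\tilde O(d^2\log(1/r))$. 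Bounding $\overline{\pi}_{\min}$ is the role of Lemma \ref{lem:degeneracy}(1), which your sketch never invokes, and this factor of $T$ is exactly where the exponent rises from $d^6$ (in Lemma \ref{lem:polyjump}) to $d^8$ (in the theorem); your final accounting silently attributes $d^8$ to the jump rate itself.

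Second, your conversion is phrased in terms of $\sup_S \delta(S)/\pi(S)$, but no pointwise bound is available or needed: Lemma \ref{lem:polyjump} controls only the $\pi$-average. The actual mechanism is an averaging/Markov argument: since $\sum_F\overline{\pi}(F)\,\E J_F^T$ equals $T$ times the average jump rate, some facet $F_0$ satisfies $\E J_{F_0}^T\le T\cdot(\text{average jump rate})$; running the chain from $F_0$ for time $T$ equal to the mixing time yields a random path whose endpoint distribution is within $\overline{\pi}_{\min}/2$ of $\overline{\pi}$ in total variation, and $G$ is taken to be the set of endpoints of paths of length at most $2\,\E[\mathrm{length}]/\phi$, so that $\overline{\pi}(G)\ge 1-\phi$ by Markov's inequality while every element of $G$ lies within the stated distance of the single root $F_0$. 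Your heuristic (``far states are reached only through many low-rate transitions, so they carry little mass'') gestures at this but does not substitute for it; in particular, without choosing $F_0$ by averaging, the expected number of jumps from an arbitrary root in time $T$ is not controlled by Lemma \ref{lem:polyjump}.
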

\begin{remark}\label{rem:polyprob} The probability in Theorem \ref{thm:5.15} may be upgraded to $1-m^{-c}$ for any $c$ at the cost of an additional $m^c$ factor in the diameter bound, by applying Markov's inequality in the proof of Lemma \ref{lem:polyjump} with a different threshold.
\end{remark}

\newcommand{\pibar}{\overline{\pi}}
The proof of Theorem \ref{thm:5.15} relies on three properties of the (random) continuous time Markov chain  with state space $\Omega$ and infinitesimal generator\footnote{The reader may consult e.g. \cite[Chapter 6]{grimmett2020probability} for an introduction to continuous time Markov processes.}
\begin{equation}\label{eqn:semigroup}Q:=-\HL^{-1}L,\end{equation}
where $L$ is as in \eqref{eqn:laplacian}. The corresponding Markov semigroup
\[P(t):=\exp(-t\HL^{-1}L),\quad t\ge 0,\]
has stationary distribution proportional to $\HL\mathbf{1}=\pi(\cdot)$ by Lemma \ref{lem:spectral_gap}(2); call the normalized stationary distribution $\pibar(\cdot):=\pi(\cdot)/\pi(\Omega)$

The first  property is that the stationary distribution $\pibar$ is (in a quite mild sense) non-degenerate, with high probability. Apart from being essential in our proofs, this relates the measure $\pi$ to well-studied measures in convex geometry such as the surface measure and mean curvature measure $\qu(\cdot)$, clarifying the meaning of Theorem \ref{thm:5.15}. The proof of Lemma \ref{lem:degeneracy} appears in Section \ref{sec:nondegenerate}.
\begin{lemma}[Non-degeneracy of $\pibar$]\label{lem:degeneracy} Assume $\aS,\aR$.
With probability at least $1-1/m^2$:
\begin{enumerate}
    \item $\min_{S\in\Omega} \pibar(S)\ge \pibar_{min}:= C\frac{m^{-2d^2}r^2}{d^3}.$
    \item  $c\sa{K}\le \pi(\Omega)\le O(d^3r^{-2})\sa{K}.$
    \item For every $S\in\Omega$,
    $\qu(S)/2\le \pi(S)\le O(r^{-1})\qu(S).$
\end{enumerate}
\end{lemma}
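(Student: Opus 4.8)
My plan is to condition on a high-probability ``roundedness event,'' deduce parts (2) and (3) on that event from elementary trigonometry together with standard mixed-volume identities, and then establish part (1) --- the only statement that genuinely uses the Gaussian perturbations --- by a union bound over all faces of $K$. Let $E_0$ be the event that $\|g_j\|\le\alpha$ for every $j\le m$. Since $\|g_j\|^2/\sigma^2$ is chi-squared with $d$ degrees of freedom and $\alpha=6\sigma\sqrt{d\log m}$, a Gaussian tail bound and a union bound give $\P(E_0^c)\le m\cdot m^{-\Omega(d)}\le\tfrac12 m^{-2}$. On $E_0$, assumption (R) gives, for every unit vector $u$, $h_K(u)\ge\max_{i\ne 1}\langle v_i,u\rangle\ge h_{K_0^{(1)}}(u)-\alpha\ge r-\alpha$, so by (S) we obtain $\rho_{in}B_2^d\subset K\subset R_{out}B_2^d$ with $\rho_{in}\ge r-\alpha\ge r/2$ and $R_{out}\le 1+\alpha\le 2$. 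This roundedness controls dihedral angles: if $F_S,F_T$ are adjacent facets with outward unit normals $u_S,u_T$ and $x\in F_{ST}$, then $\langle x,u_S\rangle,\langle x,u_T\rangle\ge\rho_{in}$ (both facets lie at distance $\ge\rho_{in}$ from the origin) while $\|x\|\le R_{out}$, so $\|u_S+u_T\|\ge 2\rho_{in}/R_{out}$ by Cauchy--Schwarz, hence $\cos\theta_{ST}=\tfrac12\|u_S+u_T\|^2-1\ge-1+2(\rho_{in}/R_{out})^2$, and therefore $\theta_{ST}\le\pi-\Omega(r)$ and $\tan(\theta_{ST}/2)=O(1/r)$.

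Part (3) then follows term by term in the sums defining $\pi(S)$ and $\qu(S)$. Since $\tan(t/2)\ge t/2$ on $(0,\pi)$ we get $\pi(S)\ge\qu(S)/2$; and since $t\mapsto\tan(t)/t$ is increasing on $(0,\pi/2)$ and $\theta_{ST}/2\le\pi/2-\Omega(r)$, we get $\tan(\theta_{ST}/2)\le\frac{\tan(\pi/2-\Omega(r))}{\pi/2-\Omega(r)}\cdot\tfrac12\theta_{ST}=O(1/r)\,\theta_{ST}$, hence $\pi(S)\le O(1/r)\,\qu(S)$. Both bounds hold for every $S\in\Omega$ on $E_0$, which is the only randomness needed for (3).

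For part (2), I would pass through two mixed volumes of $K$ with the ball. By the (local) Steiner formula for the polytope $K$, in the expansion $\vol(K+\epsilon B_2^d)=\sum_{j=0}^d\binom{d}{j}V(K[d-j],B_2^d[j])\,\epsilon^j$ the coefficient of $\epsilon$ is the surface area $\sa{K}=\sum_S|F_S|$ (the facet contributions) and the coefficient of $\epsilon^2$ is $\tfrac12\sum_{\text{ridges }R}|R|\,\theta_R$ (the circular-sector contributions over ridges, the sector angle at a ridge $F_{ST}$ being exactly the angle $\theta_{ST}$ between the two outward facet normals); thus $\sa{K}=d\,V(K[d-1],B_2^d[1])$ and $\qu(\Omega)=2\sum_R|R|\theta_R=2d(d-1)\,V(K[d-2],B_2^d[2])$. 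Since $K$ contains $\rho_{in}B_2^d$ and lies in $R_{out}B_2^d$, replacing one of the $d-1$ copies of $K$ in $V(K[d-1],B_2^d[1])$ by $\rho_{in}B_2^d$, resp.\ $R_{out}B_2^d$, and invoking monotonicity and positive homogeneity of mixed volumes gives
\[\rho_{in}\,V(K[d-2],B_2^d[2])\ \le\ V(K[d-1],B_2^d[1])\ \le\ R_{out}\,V(K[d-2],B_2^d[2]),\]
so $\qu(\Omega)/\sa{K}=2(d-1)V(K[d-2],B_2^d[2])/V(K[d-1],B_2^d[1])$ lies in $[\,2(d-1)/R_{out},\,2(d-1)/\rho_{in}\,]\subseteq[\,d-1,\,4(d-1)/r\,]$. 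Combining with part (3), $\pi(\Omega)\ge\qu(\Omega)/2\ge\tfrac{d-1}{2}\sa{K}\ge c\,\sa{K}$ and $\pi(\Omega)\le O(1/r)\,\qu(\Omega)\le O(d^3/r^2)\,\sa{K}$, which is (2). (Only monotonicity and homogeneity of mixed volumes are used here, not Alexandrov--Fenchel.)

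Part (1) is the crux and the only place where anticoncentration is needed. By part (2), $\pi(\Omega)\le O(d^3/r^2)\,\sa{(2B_2^d)}$ is bounded by a quantity depending only on $d,r$, so it suffices to lower-bound $\pi(S)$ uniformly in $S$. Every facet $F_S$ has at least one neighbour $T_0$, and $\pi(S)\ge\qu(S)/2\ge\tfrac12|F_{ST_0}|\,\theta_{ST_0}$; hence it is enough to lower-bound, simultaneously over all $(d-1)$-subsets $S_0\subset[m]$ and all $a,b\notin S_0$: (i) the ridge volume $\vol_{d-2}(\conv\{v_j:j\in S_0\})$, and (ii) the angle between the hyperplanes $\aff\{v_j:j\in S_0\cup\{a\}\}$ and $\aff\{v_j:j\in S_0\cup\{b\}\}$. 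For (ii): conditioning on the $d$ points indexed by $S_0\cup\{a\}$ fixes the hyperplane $\aff F_S$, and $\sin\theta_{ST}$ equals the distance from $v_b$ to $\aff F_S$ divided by its distance to the $(d-2)$-flat $\aff F_{ST}$ (an elementary identity, cf.\ the proof of Lemma~\ref{lem:ratio_of_volumes}); the denominator is at most $\diam(K)\le 4$, and the numerator is the modulus of a one-dimensional Gaussian of standard deviation $\sigma$, so $\P(\theta_{ST}<s)=O(s/\sigma)$. For (i): write the volume as an iterated base-times-height, revealing one vertex at a time so each height is the distance from a fresh Gaussian to a fixed affine subspace, with $\P(\text{height}<\xi)=O(\xi/\sigma)$. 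Choosing $s,\xi$ of order $\sigma\,m^{-\Theta(d)}$ so that the union bound over the $O(m^{d+1})$ index sets and heights contributes at most $\tfrac12 m^{-2}$, one gets $\theta_{ST}\ge\sigma m^{-\Theta(d)}$ and $|F_{ST}|\ge(\sigma m^{-\Theta(d)})^{d-2}/(d-2)!$ for every ridge, hence $\pi(S)\ge\sigma^{\Theta(d)}m^{-\Theta(d^2)}$ for every $S$; dividing by the bound on $\pi(\Omega)$ and re-expressing the powers of $\sigma$ through $r$ and $m$ via (S) and the reduction's parameter relation \eqref{eqn:rvalue} yields $\pibar(S)\ge Cm^{-2d^2}r^2/d^3$. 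I expect the main obstacle to be exactly this last part: running the anticoncentration uniformly over the $\binom{m}{d-1}$ candidate ridges while (a) never conditioning on the combinatorial event ``$F_S$ is a facet'' (handled by proving the bounds for every candidate subset, whether or not it is a face), (b) choosing the base-times-height decomposition so every factor is a genuinely nondegenerate one-sided Gaussian functional, and (c) tracking the exponents carefully enough that, once the powers of $\sigma$ are converted into powers of $r$ and $m$, the clean form $m^{-2d^2}r^2/d^3$ emerges.
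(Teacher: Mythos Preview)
Your arguments for parts (3) and (2) are correct. Part (3) is essentially the paper's argument. Part (2) is genuinely different and cleaner: the paper applies Alexandrov--Fenchel twice (with $\beta=1/2$ for the upper bound, using $|\partial K|/|K|\le 2d/r$, and with $\beta=1/(d-1)$ for the lower bound), whereas you replace one copy of $K$ in $V(K[d-1],B_2^d[1])$ by $\rho_{in}B_2^d$ or $R_{out}B_2^d$ and invoke only monotonicity and homogeneity of mixed volumes. Your route is more elementary and in fact yields tighter constants (an $\Omega(d)$ lower bound and an $O(d/r^2)$ upper bound on $\qu(\Omega)/\sa{K}$, versus the paper's $\Omega(1)$ and $O(d^3/r)$).

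Part (1) has a real gap at the very last step. Your anticoncentration yields $\pi(S)\ge \sigma^{\Theta(d)}m^{-\Theta(d^2)}$, and to remove the $\sigma$ you invoke assumption \textbf{(S)} and the relation \eqref{eqn:rvalue}. But both of these give \emph{upper} bounds on $\sigma$ (namely $\sigma<r/(6d^{5/2}\sqrt{\log m})$ and $\sigma=O(r/m^3)$), so neither can lower-bound $\sigma^{\Theta(d)}$. The correct device is the standing WLOG assumption $\sigma>m^{-d}$ stated just before \eqref{eqn:btrunc}; with it, $\sigma^{\Theta(d)}\ge m^{-\Theta(d^2)}$ and your bound becomes $\sigma$-free as required. (The stray $(d-2)!$ from the base-times-height volume is harmless, since $(d-2)!\le m^{d}$ can be absorbed into the $m^{-\Theta(d^2)}$.)

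For comparison, the paper's route to (1) is tidier and avoids separate anticoncentration for volumes and angles. It conditions on the single event $\mathcal{B}=\{\min_{S,j}\dist(v_j,\aff(F_S))\ge\epsilon\}$ with $\epsilon=m^{-5d}$, which has probability $\ge 1-1/m^3$ once $\sigma>m^{-d}$. On $\mathcal{B}$ every dihedral angle satisfies $\tan(\theta_{ST}/2)\ge C\epsilon$, so $\pi(S)\ge C\epsilon\sum_{T\sim S}|F_{ST}|$; the isoperimetric inequality for the simplex $F_S$ turns this ridge-perimeter into $\Omega(|F_S|^{(d-2)/(d-1)})$, and Lemma~\ref{lem:inradius} (applied on $\mathcal{B}$) gives $|F_S|\ge\epsilon^{d-1}/d$, hence $\pi(S)\ge C\epsilon^d$. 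Your single-ridge approach works too once the $\sigma$ issue is fixed, but the paper's packaging via $\mathcal{B}$ is what makes the bound come out $\sigma$-free from the start.
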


The second property is that $Q$ (almost surely) has a spectral gap of at least one, by Lemma \ref{lem:spectral_gap}(2). This implies that the chain \eqref{eqn:semigroup} mixes rapidly to $\pibar$ (in the sense of continuous time) from any well-behaved starting distribution. 
In particular let us say that a probability measure $p$ on $\Omega$ is  an {\em $M$-warm start} if
\[\sup_{S\in\Omega}\frac{p(S)}{\pibar(S)}\le M.\]
Let $\ell_2(\pibar)$ denote the inner product space on defined on $\R^\Omega$, where the inner product is given by $\langle f, g\rangle_{\ell_2(\pibar)} := \sum_{S\in\Omega} \pibar(S) f(S) g(S)$, and let $\ell_1(\pibar)$ be the corresponding $\ell_1$ space.  Let $\Pi$ be the $\Omega\times \Omega$ diagonal matrix whose $S^{th}$ diagonal entry is $\pibar(S)$.
We define the density of $p$ with respect to $\pibar$ to be the the vector with entries  $\frac{p(S)}{\pibar(S)}$. We omit the proof of the following standard fact.
\begin{lemma}[Warm Start Mixing]  \label{lem:mixing}
If $p$ is $M-$warm, then for $\tau>0$,  $t=\Omega(\log(M/\tau))$ time, one has
\[||\pibar - pP(t)||_{TV}\le \tau.\]
\end{lemma}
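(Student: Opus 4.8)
The plan is to run the standard $\ell_2(\pibar)$-contraction argument for a reversible continuous-time chain, using only that $Q=-\HL^{-1}L$ is self-adjoint on $\ell_2(\pibar)$ with a spectral gap of at least $1$ (Lemma~\ref{lem:spectral_gap}(2)). First I would pass from total variation to the $\ell_2(\pibar)$ norm of the density. Writing $f_t\in\R^\Omega$ for the density of $pP(t)$ with respect to $\pibar$, i.e.\ $f_t(S)=(pP(t))(S)/\pibar(S)$, and using that $\pibar$ is a probability measure, Cauchy--Schwarz gives
\[
\|\pibar-pP(t)\|_{TV}=\tfrac12\sum_{S\in\Omega}\pibar(S)\,|f_t(S)-1|\le \tfrac12\,\|f_t-\one\|_{\ell_2(\pibar)},
\]
so it suffices to show $\|f_t-\one\|_{\ell_2(\pibar)}\le 2\tau$ for $t=\Omega(\log(M/\tau))$.

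Next I would track the evolution of $f_t$. Since $L=L^{T}$ and $\HL$ is diagonal, we have $\pibar(S)\propto \HL_{SS}$ and the detailed balance relation $\pibar(S)Q_{ST}=\pibar(T)Q_{TS}$; equivalently $\Pi P(t)=P(t)^{T}\Pi$, so $Q$ is self-adjoint on $\ell_2(\pibar)$ and $f_t=P(t)f_0$ solves $\dot f_t=Qf_t$. Because $\langle\one,f_0\rangle_{\ell_2(\pibar)}=\sum_S p(S)=1=\|\one\|_{\ell_2(\pibar)}^2$, the component of $f_0$ along the top eigenvector $\one$ (eigenvalue $0$) is exactly $\one$; write $f_0=\one+w$ with $w\perp\one$ in $\ell_2(\pibar)$. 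By Lemma~\ref{lem:spectral_gap}(2) every eigenvalue of $Q$ on $\one^{\perp}$ is at most $-1$, hence $\|f_t-\one\|_{\ell_2(\pibar)}=\|P(t)w\|_{\ell_2(\pibar)}\le e^{-t}\|w\|_{\ell_2(\pibar)}$. Finally the $M$-warm hypothesis controls the initial deviation: $\|w\|_{\ell_2(\pibar)}^2=\|f_0\|_{\ell_2(\pibar)}^2-1=\sum_S p(S)\tfrac{p(S)}{\pibar(S)}-1\le M-1\le M$.

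Combining the three displays yields $\|\pibar-pP(t)\|_{TV}\le \tfrac12 e^{-t}\sqrt M$, which is at most $\tau$ once $t\ge \tfrac12\log M+\log\tfrac1{2\tau}=\Omega(\log(M/\tau))$, as claimed. There is essentially no obstacle here beyond bookkeeping; the one point I would verify with care is that $Q$ is self-adjoint on $\ell_2(\pibar)$ (equivalently, that the chain is reversible with respect to $\pibar$), which is immediate from the symmetry of the weighted graph Laplacian $L$ together with the identification $\pibar\propto\HL\one$ from Lemma~\ref{lem:spectral_gap}(2).
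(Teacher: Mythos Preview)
Your argument is correct and is exactly the standard $\ell_2(\pibar)$-contraction proof one would expect here; the paper itself omits the proof entirely, calling it a ``standard fact,'' so there is nothing to compare against beyond noting that your write-up supplies precisely the details the authors leave out.
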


The third and final property is a bound on the rate at which the continuous chain makes discrete transitions between states.
Let $\NNN$ denote the average number of state transitions made by the continuous time chain in unit time, from stationarity, and note that$$\NNN = \frac{\sum_{S\in\Omega} \pi(S) |Q(S,S)|}{\sum_{S\in\Omega}\pi(S)}= \frac{\sum_{S\in\Omega} \delta(S)}{\sum_{S\in\Omega}\pi(S)}$$
as the diagonal entries of the generator $Q$ are equal to $-\delta(S)/\pi(S)$. The most technical part of the proof is the following probabilistic bound.
\newcommand{\jumpbound}{\tilde{O} (m^3d^6/\sigma^2r)}
\begin{lemma}[Polynomial Jump Rate]\label{lem:polyjump} Assume $\aS,\aR$. With probability at least $1-1/\log(m)$, the continuous time Markov chain defined by \eqref{eqn:semigroup} satisfies:
$$\NNN\le \jumpbound.$$
\end{lemma}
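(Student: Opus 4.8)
The plan is to bound $\NNN = \sum_{S\in\Omega}\delta(S) \big/ \sum_{S\in\Omega}\pi(S)$ by bounding the numerator from above and the denominator from below, each holding with probability $1-1/\poly(m)$ (or $1-1/\log m$, as in the statement). For the denominator, I would invoke Lemma \ref{lem:degeneracy}(2), which gives $\sum_{S\in\Omega}\pi(S) = \pi(\Omega)\ge c\,\sa{K}$, so it suffices to show $\sum_{S\in\Omega}\delta(S) \le \tilde O(m^3 d^6/\sigma^2 r)\cdot \sa{K}$ with the desired probability. Expanding $\delta$, the numerator is $\sum_{S\in\Omega}\sum_{T\in\Omega}|F_{ST}|\csc(\theta_{ST})$, a sum over {\em adjacent pairs} $(S,T)$ (i.e., over ridges $F_{ST}\in\facets_{d-2}(K)$, each counted twice) of $|F_{ST}|\csc(\theta_{ST})$. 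So the heart of the matter is an integral-geometry / anticoncentration estimate: the total $(d-2)$-volume of the ridges of $K$, weighted by the reciprocal sine of the dihedral angle at each ridge, is with high probability at most $\poly(m,d)/(\sigma^2 r)$ times the surface area $\sa{K}$.

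I would break this into two pieces. First, a {\em deterministic / geometric} piece bounding $\sum_{\text{ridges}}|F_{ST}|$ — the total ridge volume — in terms of $\sa{K}$ and the roundedness parameter $r$; this should follow from the Alexandrov--Fenchel inequalities (\ref{eqn:af}) applied to mixed volumes of $K$ with a ball, much as in the proof of Lemma \ref{lem:degeneracy}, together with the fact that under \aR{} and \aS{} the perturbed body $K$ still contains a ball of radius $\Omega(r)$. The total ridge volume is essentially the first variation of surface area, i.e.\ a mixed volume $V(K[d-2], B[2])$-type quantity, and log-concavity interpolates it between $\vol(K)$ and $\sa K$, giving a bound of the form $\tilde O(d^{O(1)}/r)\cdot \sa K$. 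Second, the {\em probabilistic} piece: control the weights $\csc(\theta_{ST})$. Since $\csc\theta_{ST}$ blows up only when $F_S$ and $F_T$ are nearly coplanar, and the facet normals are determined by $d$-subsets of the Gaussian vectors $v_j$, a dihedral angle near $0$ or $\pi$ is a degenerate event; Gaussian anticoncentration (of the distance from a point $v_j/|v_j|$-type quantity to an affine hull of $d-1$ other perturbed points, as already used for the vertex positions) should give that $\csc\theta_{ST}\le \poly(m)/\sigma$ for {\em every} adjacent pair simultaneously, except on an event of probability $1/\poly(m)$ — or, better, that the {\em expectation} of $\sum|F_{ST}|\csc\theta_{ST}$ is bounded, and then apply Markov's inequality (this is what Remark \ref{rem:polyprob} alludes to, explaining the $1/\log m$ versus $m^{-c}$ tradeoff). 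This is where I expect to need the shadow-vertex bound of \cite{dadush2020friendly}: to control the {\em number} of ridges (equivalently, to keep the combinatorial complexity of $K$ polynomial after perturbation) so that a union bound or an expectation computation over ridges does not lose more than $\poly(m)$.

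The main obstacle, I expect, is the coupling between the $|F_{ST}|$ factors and the $\csc\theta_{ST}$ factors: one cannot simply bound $\csc\theta_{ST}$ uniformly and pull it out, because the worst-case angle bound (of order $\poly(m)/\sigma$ per ridge, times possibly exponentially many ridges in the worst case) is too weak; nor can one ignore the geometry, because anticoncentration alone says nothing about how much $(d-2)$-area sits near a near-degenerate ridge. The resolution should be to integrate over a two-dimensional ``shadow'': for a fixed ridge direction, project $K$ onto the plane orthogonal to that ridge, so that $|F_{ST}|\csc\theta_{ST}$ becomes (the $(d-2)$-area of the ridge) times (a length in the projected polygon), and the sum over ridges in a given direction telescopes into a perimeter-type quantity for the shadow polygon; the expected number of edges of a random two-dimensional shadow is $\poly(m,d)$ by \cite{dadush2020friendly}, and the expected perimeter is controlled by $\sa K$ and $r$. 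Averaging this over ridge directions (an application of the Cauchy--Kubota / integral-geometric formulas, the ``quadrature'' step of Section \ref{sec:quadrature}) should assemble the claimed bound $\NNN\le\jumpbound$. Carefully tracking the powers of $d$ and $m$ through the mixed-volume interpolation, the anticoncentration, and the shadow-vertex count — and making sure the exceptional events across all these steps union-bound to $1/\log m$ — is the bulk of the work.
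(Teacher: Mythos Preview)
Your top-level plan---bound $\E[\NNN\cdot\B\Ce]$, use Lemma~\ref{lem:degeneracy}(2) for the denominator, then Markov---matches the paper exactly (this is Lemma~\ref{lem:main}). You also correctly name all the ingredients: Gaussian anticoncentration for the angles, the shadow-vertex bound of \cite{dadush2020friendly}, and an integral-geometric quadrature step. Where your proposal diverges from the paper is in how these are assembled to resolve the coupling you rightly flag between $|F_{ST}|$ and $\csc\theta_{ST}$.

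The paper does \emph{not} handle the coupled sum $\sum|F_{ST}|\csc\theta_{ST}$ by projecting orthogonally to a ridge and telescoping in the shadow polygon; I do not see an identity that makes that work. Instead the decoupling is a conditioning trick (Lemma~\ref{lem:angles}): for adjacent $S,T$ with $S\setminus T=\{j\}$, condition on $\ghat_j=(g_i)_{i\neq j}$. Then $|F_{ST}|$ and the event $K_{ST}^{(j)}$ are $\ghat_j$-measurable (since $j\notin S\cap T$), while $\csc\theta_{ST}\le 3/\dist(v_j,\aff(F_T))$ depends only on the single Gaussian coordinate of $g_j$ normal to $F_T$; integrating that coordinate contributes a factor $O(d\log m/\sigma)$ and reduces the problem to bounding the \emph{unweighted} $(d-2)$-perimeter $\sum|F_{ST}|K_{ST}^{(j)}$ of the deleted polytope $K^{(j)}$. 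This last quantity is where the shadow bound enters, and not as you suggest (to count ridges for a union bound): via the quadrature Lemma~\ref{lem:quadrature} one builds a distribution on $2$-planes $W$ for which $\P[W\cap\Se\neq\emptyset]\gtrsim\vol_{d-2}(\Se)/\sa{K_0^{(j)}}$ for every small disk $\Se$ inside a ridge, so that $\E|\facets_0(W\cap K^{(j)})|$ simultaneously upper-bounds (Theorem~\ref{thm:shadow}) and lower-bounds the ratio $\sum|F_{ST}|/\sa{K_0^{(j)}}$. Note finally that your proposed ``deterministic piece''---bounding $\sum|F_{ST}|$ directly by Alexandrov--Fenchel---does not go through: the quermassintegral in \eqref{eqn:quer2} is $\sum|F_{ST}|\theta_{ST}$, not $\sum|F_{ST}|$, and small dihedral angles can make the latter arbitrarily larger than the former, which is exactly why Lemma~\ref{lem:perimeter} requires the quadrature/shadow argument rather than log-concavity alone.
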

The proof of this lemma involves showing that the facets of $K$ are well-shaped and have non-degenerate angles between them in a certain average sense, and is carried out in Sections \ref{sec:jumprate}, \ref{sec:perimeter}, and \ref{sec:quadrature}.

Combining these ingredients, we can prove Theorem \ref{thm:5.15}

\newcommand{\length}{\mathrm{length}}
\newcommand{\target}{\mathrm{target}}
\begin{proof}[Proof of Theorem \ref{thm:5.15}]
Let $T$ be a fixed positive time to be chosen later.
Consider the continuous time chain \eqref{eqn:semigroup}, and for $F\in\Omega$ let the random variable $J_F^T$ denote the number of transitions in $[0,T]$ when the chain is started at $F$. With probability $1-1/m^2$ we have
$$ \sum_{F\in\Omega} \pibar(F)\E J_F^T = T\NNN \le \jumpbound\cdot T$$
by Lemma \ref{lem:polyjump} so there is a facet $F_0\in\Omega$ satisfying 
\begin{equation}\label{eqn:fgood} \E J_{F_0}^T \le \jumpbound\cdot T.\end{equation}
By Lemma \ref{lem:degeneracy}(1), the distribution $\delta_{F_0}$ concentrated on $F_0$ is $\pibar_{min}^{-1}-$warm with probability $1-1/m^2$. Invoking Lemma \ref{lem:mixing} with starting distribution $\delta_{F_0}$ and parameters 
$$T=O(\log(1/\pibar_{min}))=\tilde{O}(d^2\log(1/r)),\quad M=\pibar_{min}^{-1},\quad \tau=\pibar_{min}/2$$ we have
$$ \|\pibar - \delta_{F_0}P(T)\|_{TV}\le \pibar_{min}/2.$$
Combining this with \eqref{eqn:fgood}, we obtain a distribution on discrete paths $\gamma$ in $\Omega$ (with respect to the adjacency relation $\sim$) such that each path has source $F_0$, 
$$\E \length(\gamma)\le \jumpbound\cdot T,$$
and the distribution of $\target(\gamma)$ is within total variation distance $\pibar_{min}/2$ of $\pibar$. 
Letting
$$ G = \{\target(\gamma):\length(\gamma)\le 2\E\length(\gamma)/\phi\}$$
we immediately have that the diameter of $G$ is at most 
$$ \jumpbound\cdot 2T/\phi = \tilde{O}(m^3d^8/\sigma^2r\phi)$$
and by Markov's inequality $\pibar(G)\ge 1-\phi$, as desired.

\end{proof}

Before proceeding with the proofs of Lemmas \ref{lem:degeneracy} and \ref{lem:polyjump}, we collect the probabilistic notation used throughout the sequel. We will often truncate on the following two high probability events.
Fix 
\begin{equation}\label{eqn:epsdef}\epsilon:=m^{-5d}\end{equation} and define:
$$\B:= \left\{\min_{S\in\all, j\in [m]\setminus S} \dist(v_j, \aff(F_S))\ge\epsilon\right\},$$
$$\Ce:=\left\{\max_{j\in [m]}\|g_j\|\le \alpha\right\}.$$
Note that whenever $\sigma>m^{-d}$ (which we may assume without loss of generality, as otherwise the diameter is trivially at most $1/\sigma$):
\begin{equation}\label{eqn:btrunc}\P[\B]\ge 1-O(m^{-4d}/\sigma)\ge 1-1/m^3,\end{equation} since the density of the component of $v_j$ orthogonal to $\aff(F_S)$ is bounded by $1/\sigma$ and there are at most $m^d$ facets. We also have
\begin{equation}\label{eqn:ctrunc}\P[\Ce]\ge 1-1/m^3,\end{equation}
by standard Gaussian concentration and a union bound. 

We will repeatedly use that on $\Ce$, we have the Hausdorff distance bounds
\begin{equation}\label{eqn:hdist} \hdist(K,K_0)\le\alpha,\qquad \hdist(K^{(j)},K_0^{(j)})\le \alpha\quad\forall j\le m,\end{equation}
for $\alpha$ as in \eqref{eqn:alphadef}, since if $x=\sum_{j\le m}c_j (a_j+g_j)\in K$ for some convex coefficients $c_j$ then $x_0=\sum_{j\le m} c_j a_j\in K_0$ and $\|x-x_0\|\le \alpha$.

For an index $j\in [m]$ let $\ghat_j:=(g_1,\ldots,g_{j-1},g_{j+1},\ldots g_m)$ and let $K^{(j)}=\conv(v_i:i\neq j)$. Note that $K^{(j)}$ is a deterministic function of $\ghat_j$.
Define the indicator random variables
$$ K_S := \{ F_S\in \facets_{d-1}(K)\},\qquad K_S^{(j)} := \{ F_S\in \facets_{d-1}(K^{(j)})\}$$
for subsets $S\in\all$. 
 It will be convenient to fix in advance a total order $<$ on $\all$.
 
 We will occasionally refer to
$$\sum_{F\in\facets_{k}(K)} \vol_{k}(F)$$
as the {\em $k-$perimeter} of $K$.
\subsection{Nondegeneracy of $\pibar$}\label{sec:nondegenerate}
We will repeatedly use the following fact relating Hausdorff distance and containment of convex bodies.
\begin{lemma}[Containment of Small Perturbations]\label{lem:contain} If  $\hdist(K,K_0)\le\alpha$ for any two convex bodies and $rB_2^d\subset K_0$, then
$$(1+2\alpha /r)^{-1}K_0^{}\subset K^{} \subset (1+\alpha/r)K_0.$$
\end{lemma}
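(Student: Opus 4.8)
The plan is to use the fact that $rB_2^d \subset K_0$ together with $\hdist(K, K_0) \le \alpha$ to ``sandwich'' $K$ between two scaled copies of $K_0$ via support-function comparisons, which turn Hausdorff distance into pointwise estimates once a body contains a ball. First I would recall that for convex bodies $L$, $L'$ one has $\hdist(L, L') = \|h_L - h_{L'}\|_\infty$ where $h_L(u) = \sup_{x\in L}\langle x, u\rangle$ and the sup-norm is over unit vectors $u$; moreover $rB_2^d \subset K_0$ is equivalent to $h_{K_0}(u) \ge r$ for all unit $u$. From $\hdist(K, K_0)\le\alpha$ we get $h_{K_0}(u) - \alpha \le h_K(u) \le h_{K_0}(u) + \alpha$ for every unit $u$.

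For the upper containment $K \subset (1+\alpha/r) K_0$: for every unit $u$, $h_K(u) \le h_{K_0}(u) + \alpha \le h_{K_0}(u)(1 + \alpha/r)$, using $h_{K_0}(u)\ge r$, i.e. $h_K \le h_{(1+\alpha/r)K_0}$ pointwise, which gives the containment of convex bodies. For the lower containment $(1+2\alpha/r)^{-1} K_0 \subset K$: for every unit $u$,
\[
h_K(u) \ge h_{K_0}(u) - \alpha \ge h_{K_0}(u)\left(1 - \frac{\alpha}{r}\right) \ge h_{K_0}(u)\cdot\frac{1}{1+2\alpha/r},
\]
where the last step uses $(1 - t)(1 + 2t) = 1 + t - 2t^2 \ge 1$ for $t = \alpha/r \in [0, 1/2]$ (and the regime $\alpha < r$ is the only interesting one, as otherwise the statement is vacuous or can be handled directly). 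Hence $h_{(1+2\alpha/r)^{-1}K_0} \le h_K$ pointwise, giving the claimed containment.

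I do not expect a serious obstacle here; the only things to be careful about are (i) making the support-function characterization of Hausdorff distance and of ball-containment precise (both are standard, see \cite[Chapter 1]{schneider2014convex}), and (ii) the elementary inequality $(1-t)(1+2t)\ge 1$ for $t\in[0,1/2]$ that makes the lower bound's constant $(1+2\alpha/r)^{-1}$ work out cleanly rather than something messier like $(1-\alpha/r)$. One could alternatively give a purely geometric argument: any point $x\in K$ is within $\alpha$ of $K_0$, and scaling $K_0$ about the origin by $1+\alpha/r$ moves its boundary outward by at least $\alpha$ (since the inradius is $\ge r$), which absorbs the perturbation; and symmetrically for the inner bound. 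I would present the support-function version as it is the cleanest and least error-prone.
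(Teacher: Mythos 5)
Your proof is correct, but it takes a genuinely different route from the paper's. You work entirely with support functions, using $\hdist(K,K_0)=\sup_{u\in S^{d-1}}|h_K(u)-h_{K_0}(u)|$ and $h_{K_0}\ge r$ to get both containments pointwise. The paper instead argues with Minkowski sums: the outer containment is the one-liner $K\subset K_0+\alpha B_2^d\subset K_0+(\alpha/r)K_0=(1+\alpha/r)K_0$, and for the inner containment it first establishes $(r/2)B_2^d\subset K$ by a separating-hyperplane contradiction (if some $z\in\partial(r/2)B_2^d$ were outside $K$, a supporting halfspace at $z$ would leave a point of $rB_2^d\subset K_0$ at distance $>\alpha$ from $K$), and then writes $K_0\subset K+\alpha B_2^d\subset K+(2\alpha/r)K=(1+2\alpha/r)K$. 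The two arguments are essentially dual views of the same geometry; yours is arguably cleaner, replacing the halfspace construction with the elementary inequality $(1-t)(1+2t)\ge 1$. One point worth making explicit in either version: both proofs genuinely require $\alpha\le r/2$ (your inequality fails for $t>1/2$, and the paper's contradiction step needs $r/2>\alpha$), and indeed the stated containment $(1+2\alpha/r)^{-1}K_0\subset K$ can fail for $\alpha>r/2$ (e.g., $K_0=rB_2^d$, $K=(r-\alpha)B_2^d$). This is harmless since the lemma is only invoked under assumption \textbf{(S)}, where $\alpha<r/d^2$, but your phrase ``the regime $\alpha<r$ is the only interesting one'' should be tightened to $\alpha\le r/2$.
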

\begin{proof} 
The second containment is immediate from
$$K\subset K_0+\alpha B_2^d\subset K_0+(\alpha/r) K_0.$$

The condition $\hdist(K,K_0)\le\alpha$ also implies $K_0\subset K+\alpha B_2^d$. To turn this into a multiplicative containment, we claim that $(r/2)B_2^d\subset K$. If not, there is a point $z\in \partial (r/2)B_2^d\setminus K$. Choose a halfspace $H$ supported at $z$ containing $K$. Let $y$ be a point in $\partial(rB_2^d)$ at distance at least $r/2$ from $H$ and note that $y\in K_0$. But now $\dist(y,K)\ge \dist(y,H)\ge r/2>\alpha$, violating that $K_0\subset K+\alpha B_2^d$. Thus, we conclude that $K_0\subset (1+2\alpha/r)K$, establishing the first containment. 
\end{proof}
\begin{proof}[Proof of Lemma \ref{lem:degeneracy}] Condition on $\Ce$.
By $\aS, \aR$, \eqref{eqn:hdist}, and Lemma \ref{lem:contain}, we have
\begin{equation}\label{eqn:krounded}K\supset (1+2/d^2)^{-1}K_0\supset (r/2)B_2^d,\end{equation}
and also $K\subset (1+\alpha)B_2^d$. Consequently, the angle between any two adjacent facets $F_S,F_T$ of $K$ must satisfy
$$ |\theta_{ST}-\pi|=\Omega(1/r),$$
which implies
\begin{equation}\label{eqn:tanbound} \theta_{ST}/2\le \tan (\theta_{ST}/2) \le O(1/r)\theta_{ST}\end{equation}
for all $\theta_{ST}$. Thus, for each facet $S\in\Omega$:
\begin{equation}\label{eqn:qubound} \qu(S)/2\le \pi(S)\le O(r^{-1})\qu(S),\end{equation}
establishing Lemma \ref{lem:degeneracy}(3).

Equation \eqref{eqn:krounded} further implies:
$$\frac{|\partial K|}{|K|}\leq \frac{2d}{r}.$$ 
By e.g. \cite[Section 4.2]{schneider2014convex}, we have the quermassintegral formulas:
\begin{equation}\label{eqn:quer1} d\cdot V(K[d-1],B_2^d[1]) = \sum_{S\in\Omega}|F_S|=|\partial K|,\end{equation}
\begin{equation}\label{eqn:quer2}\binom{d}{2}V(K[d-2],B_2^d[2]) = \sum_{S<T\in\Omega} |F_{ST}|\theta_{ST}.\end{equation}
By the Alexandrov-Fenchel inequality with $\beta=1/2$: \begin{eqnarray*} \qu(\Omega)={d \choose 2}V(K[d-2], B_2^d[2]) &\leq & {d \choose 2}  \frac{V(K[d-1],B_2^d[1])^2}{V(K[d])} \\&\leq& {d \choose 2} \frac{  |\partial K|^2}{|K|}\\ & \leq & O(d^3)\frac{|\partial K|}{r}\\
& \leq & O(d^3/r)|\partial K|.\end{eqnarray*}

By Alexandrov-Fenchel with $\beta=1/(d-1)$, we also have
\begin{eqnarray*}
\qu(\Omega)={d \choose 2} V(K[d-2], B_2^d[2]) & \geq &  
{d\choose 2} V(K[d-1],B_2^d[1])^{\frac{d-2}{d-1}} V(B_2^d[d])^{\frac{1}{d-1}} \\
&=& {d\choose 2} \left(d^{-1} |\partial K|\right)^\frac{d-2}{d-1} |B_2^d|^\frac{1}{d-1}\\
& \ge & {d\choose 2}\sqrt{\frac{2\pi e}{d^3}} |\partial K|^\frac{d-2}{d-1}\\
& \ge & C|\partial K|.\end{eqnarray*}
The last step follows from the fact that $K\subset (1+\alpha)B_2^d$ so $|\partial K|^{\frac{1}{d-1}}=O(1)$.
Combining these inequalities with \eqref{eqn:tanbound},\eqref{eqn:qubound}, we conclude that:
\begin{equation}\label{eqn:pisum} C|\partial K|\le \pi(\Omega)\le O(d^3r^{-2}) |\partial K|,\end{equation}
establishing Lemma \ref{lem:degeneracy}(2).

The event $\B$ implies that for every $S\in\Omega$:
\begin{align*}
    \pi(S) &= \sum_{T\sim S} |F_{ST}|\tan(\theta_{ST}/2)\\
    &\ge C\epsilon \sum_{T\sim S} |F_{ST}|\quad\textrm{since $\tan(\theta_{ST}/2)\ge C\epsilon$}\\
    &\ge C\epsilon (d-1) |F_S|^\frac{d-2}{d-1}|B_2^d|^\frac{1}{d-1}\quad\textrm{by the isoperimetric inequality}\\
    &\ge C\epsilon d |F_S|,
\end{align*} 
where in the last step we used $|F_S|=O(1)$.
Conditional on $\B$ Lemma \ref{lem:inradius} implies that $|F_S|\ge \frac{\epsilon^{d-1}}{d}$ for every $S\in\Omega$, so we conclude that
$$ \pi(S)\ge C\epsilon^d\quad\forall S\in\Omega,$$
and consequently by \eqref{eqn:pisum}
$$ \pibar(S)\ge \frac{C\epsilon^d r^2}{d^3},$$
yielding Lemma \ref{lem:degeneracy}(1), as desired.

\end{proof}
The proof of the following easy Lemma is omitted in this conference version.
\begin{lemma}[Inradius of a Simplex]\label{lem:inradius}
If $L=\conv(v_1,\ldots,v_{t+1})$ is a $t$-dimensional simplex such that each vertex of $L$ is at distance $s$ from the affine span of the remaining vertices, then $L$ contains a ball of radius $s/({t+1})$.\end{lemma}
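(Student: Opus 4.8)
The plan is to prove Lemma~\ref{lem:inradius} by exhibiting an explicit point — the ``weighted centroid'' — that is far from every facet of the simplex $L$. Let $L=\conv(v_1,\ldots,v_{t+1})$ be a $t$-simplex living in its affine hull $\aff(L)$, and for each $i$ let $h_i:=\dist(v_i,\aff(v_j:j\neq i))$ denote the height from $v_i$; by hypothesis $h_i\ge s$ for all $i$. Let $A_i:=\vol_{t-1}(\conv(v_j:j\neq i))$ be the $(t-1)$-volume of the facet opposite $v_i$. The inradius $\rho$ of $L$ is classically given by $\rho = t\,\vol_t(L)/\sum_i A_i$, since the incenter decomposes $L$ into $t+1$ cones over the facets, each of height $\rho$. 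So the task reduces to lower bounding $t\,\vol_t(L)/\sum_i A_i$.

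First I would express $\vol_t(L)$ using any one of the heights: $\vol_t(L) = \frac{1}{t} h_i A_i$ for each $i$. Summing this identity over $i$ gives $(t+1)\vol_t(L) = \frac1t \sum_i h_i A_i \ge \frac{s}{t}\sum_i A_i$, using $h_i\ge s$. Rearranging yields $\vol_t(L) \ge \frac{s}{t(t+1)}\sum_i A_i$, hence
\[ \rho = \frac{t\,\vol_t(L)}{\sum_i A_i} \ge \frac{t}{\sum_i A_i}\cdot \frac{s}{t(t+1)}\sum_i A_i = \frac{s}{t+1},\]
which is exactly the claimed bound. This is clean enough that the ``main obstacle'' is really just bookkeeping: making sure the incenter/volume-decomposition formula $\rho = t\,\vol_t(L)/\sum_i A_i$ is invoked correctly in the intrinsic metric of $\aff(L)$ (so that all the $\vol_t$, $\vol_{t-1}$ and distances are measured within that affine subspace, consistent with how $|F_S|$ and $\dist(\cdot,\aff(\cdot))$ are used in the proof of Lemma~\ref{lem:degeneracy}).

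An alternative, slightly more self-contained route avoids quoting the incenter formula: barycentric coordinates. Write points of $\aff(L)$ as $x=\sum_i \lambda_i v_i$ with $\sum_i\lambda_i=1$; then $\dist(x,\aff(v_j:j\neq i)) = \lambda_i h_i$ when $\lambda_i\ge 0$ (this is the standard fact that the $i$-th barycentric coordinate scales linearly and equals $1$ at $v_i$, $0$ on the opposite facet). Choosing $\lambda_i = 1/(t+1)$ for all $i$ — the centroid — gives a point in the interior of $L$ whose distance to the hyperplane through facet $i$ is $h_i/(t+1)\ge s/(t+1)$. Since every facet of $L$ lies in one of these $t+1$ hyperplanes, the ball of radius $s/(t+1)$ about the centroid stays inside $L$, which is the conclusion. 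I would likely present this version, as it needs only the linearity of barycentric coordinates and no appeal to the incenter. Either way the proof is short; the only thing to be careful about is the uniform choice of weights — one checks $1/(t+1)$ is optimal for the centroid, though optimality is not needed here since any choice giving all coordinates equal already yields the stated radius.
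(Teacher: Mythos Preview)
The paper does not actually prove this lemma: it explicitly says ``The proof of the following easy Lemma is omitted in this conference version.'' So there is no proof in the paper to compare against.

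Your argument is correct. Both routes you give are valid. The incenter-formula route uses $\rho=t\,\vol_t(L)/\sum_i A_i$ together with $\vol_t(L)=\tfrac1t h_iA_i$; summing the latter over $i$ and using $h_i\ge s$ gives $\rho\ge s/(t+1)$ as you wrote. The barycentric route is even cleaner: the signed distance from $x=\sum_i\lambda_i v_i$ to the hyperplane $\aff(v_j:j\neq i)$ is exactly $\lambda_i h_i$, so the centroid $\lambda_i=1/(t+1)$ is at distance $h_i/(t+1)\ge s/(t+1)$ from every facet-hyperplane, hence the ball of that radius about the centroid lies in $L$. Either version would be a fine insertion for the omitted proof; the second is shorter and avoids quoting the incenter formula. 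One small wording quibble: your last sentence about ``$1/(t+1)$ is optimal for the centroid'' is a bit garbled---the centroid \emph{is} the point with all $\lambda_i=1/(t+1)$---but this does not affect the argument.
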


\begin{comment}
Moreover, this applies not only when $K = P^o$, but also when $K$ is the intersection of $P^o$ with any halfspace $H$, showing that under reasonable conditions on the halfspace $H$ and $K$, the measure $\pi$ restricted to $H\cap K$ is comparable to the uniform measure on $\partial K$ restricted to $H \cap K$.
\end{comment}

%\end{proof}

\subsection{Average Jump Rate Bound}\label{sec:jumprate}
\newcommand{\denomconst}{O(1)}
\newcommand{\numconst}{O(d\log m/\sigma)}
\newcommand{\perimeterconst}{O(m^2d^{9/2}\log^{5/2}(m)/\sigma r)}

In this section we establish the following Lemma, which immediately implies Lemma \ref{lem:polyjump} by $\P[\B\Ce]\ge 1-2/m^3$ and Markov's inequality applied to the expectation below (absorbing the $\log(m)$ factor into the $\tilde{O}$).
\begin{lemma}[Main Estimate]\label{lem:main} Assume $\aS,\aR$ in the above setting. Then 
\begin{equation}\label{eqn:jrate} \E \frac{\sum_{S\in\Omega} \delta(S)}{\sum_{S\in\Omega}\pi(S)}\cdot \B\Ce \le \jumpbound.\end{equation}
\end{lemma}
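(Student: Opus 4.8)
The plan is to control the numerator and denominator of \eqref{eqn:jrate} separately. For the denominator, Lemma \ref{lem:degeneracy}(2) (which holds on $\B\Ce$ up to the event manipulations already recorded) gives $\sum_{S\in\Omega}\pi(S)=\pi(\Omega)\ge C|\partial K|=C\,\mathrm{Vol}_{d-1}(\partial K)$, so it remains to produce an upper bound of the form $\E\big[\sum_{S\in\Omega}\delta(S)\cdot\B\Ce\big]\le \jumpbound\cdot C\,\E[\mathrm{Vol}_{d-1}(\partial K)\cdot\B\Ce]$, or more precisely to bound the ratio after a suitable deterministic comparison of $|\partial K|$ to a quantity that does not fluctuate too much (e.g. comparing it to $|\partial K_0|$ or to the surface area of $rB_2^d$ via \eqref{eqn:krounded}, which on $\Ce$ puts $\mathrm{Vol}_{d-1}(\partial K)$ between two absolute-constant multiples of a fixed quantity depending only on $r,d$). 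So the crux is really an upper bound on $\E\big[\sum_{S\in\Omega}\delta(S)\cdot\B\Ce\big]$, i.e. on the expected ``$\csc$-weighted $(d-2)$-perimeter'' $\sum_{S<T}|F_{ST}|\csc(\theta_{ST})$ of the random simplicial polytope $K$.

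To bound $\E\sum_{S<T}|F_{ST}|\csc(\theta_{ST})\cdot\B\Ce$, I would first use the geometric identity already exploited in the proof of Lemma \ref{lem:ratio_of_volumes}: for adjacent facets $F_S,F_T$ sharing the ridge $F_{ST}$, writing $S=\{i_1,\dots,i_d\}$, $T=\{i_2,\dots,i_d,j\}$, one has $\csc(\theta_{ST})=\dist(v_j,F_S)/\dist(v_j,\aff F_S)$, so $|F_{ST}|\csc(\theta_{ST})\le |F_{ST}|\cdot\mathrm{diam}(K)/\dist(v_j,\aff F_S)$, and $\mathrm{diam}(K)=O(1)$ on $\Ce$ by \eqref{eqn:krounded}. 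Thus it suffices to bound $\E\sum_{\text{ridges}}\frac{|F_{ST}|}{\dist(v_j,\aff F_S)}\cdot\B\Ce$. Here the plan is a conditioning/Fubini argument: fix the $d$ indices forming a potential facet $S$ and an external index $j$, condition on $\ghat$ = all Gaussians except $g_j$ (so that $F_S$, when it is a facet of $K^{(j)}$, is determined), and integrate over $g_j$. The event that $F_S$ remains a facet of $K$ and that $T=S\setminus\{i\}\cup\{j\}$ is the adjacent facet across the ridge $F_{S\setminus\{i\}}$ is governed by where $v_j=a_j+g_j$ lands; the quantity $1/\dist(v_j,\aff F_S)$ is a reciprocal-distance-to-a-hyperplane, whose integral against the Gaussian density of $g_j$ is controlled by Gaussian anticoncentration — the one-dimensional marginal of $g_j$ normal to $\aff F_S$ has density $\le 1/\sigma$, and $\int_0^R t^{-1}$ is only logarithmically divergent, so truncating at the scale $\epsilon=m^{-5d}$ provided by $\B$ costs only $\log(1/\epsilon)=O(d\log m)$. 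This yields a bound of roughly $O(d\log m/\sigma)$ per (ridge, external vertex) pair times the expected $(d-2)$-volume of the ridge, and summing over the $\binom{m}{d-1}$ ridges and $m$ external vertices reduces everything to bounding the expected total $(d-2)$-perimeter $\E\sum_{F\in\facets_{d-2}(K)}\mathrm{Vol}_{d-2}(F)\cdot\B\Ce$.

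The expected $(d-2)$-perimeter is where the remaining work lives, and I expect this — not the anticoncentration step — to be the main obstacle, since a naive bound ($\binom{m}{d-1}$ ridges each of volume $O(1)$) loses a factor exponential in $d$ and is far too weak for a $\poly(m,d)$ final bound. The right tool is the shadow-vertex / expected-shadow-size machinery: the $(d-2)$-perimeter of $K$ is, up to the quermassintegral normalization, the mixed volume $\binom{d}{2}V(K[d-2],B_2^d[2])$, and by Alexandrov–Fenchel / Cauchy–Crofton-type projection formulas this can be written as an average over $2$-planes (or over pairs of directions) of the area of the orthogonal projection of $K$, which in turn is controlled by the expected number of edges of a random $2$-dimensional shadow of $K$ — exactly the object bounded in \cite{dadush2020friendly} for the smoothed model, giving a $\poly(m,d,1/\sigma,1/r)$ estimate. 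Concretely I would: (i) express $\E\sum_{F\in\facets_{d-2}(K)}\mathrm{Vol}_{d-2}(F)$ via an integral-geometric formula reducing $(d-2)$-volume of ridges to $1$-volume (perimeter) of random planar projections of $K$ weighted by the areas of the complementary shadows; (ii) bound those planar perimeters by (expected number of shadow edges)$\times$(max edge length $=O(1)$ on $\Ce$); (iii) invoke the shadow-vertex bound of \cite{dadush2020friendly}, which under $\aR$ gives a polynomial count. Assembling: denominator $\gtrsim |\partial K|\gtrsim_{\,\mathrm{abs}}$ a fixed function of $r,d$; numerator $\lesssim O(1)\cdot O(d\log m/\sigma)\cdot \E[(d-2)\text{-perimeter}]\cdot\B\Ce \lesssim O(d\log m/\sigma)\cdot \perimeterconst$; taking the ratio and simplifying (and noting that on $\Ce$, $|\partial K|$ and the $(d-2)$-perimeter are related by the rounding bounds so the $r,d$ powers combine as claimed) produces the advertised $\jumpbound$. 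The details of steps (i)–(iii) — in particular making the integral-geometric reduction rigorous and correctly tracking the $r$-dependence coming from $\aR$ — are deferred to Sections \ref{sec:perimeter} and \ref{sec:quadrature}.
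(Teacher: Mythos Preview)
Your outline matches the paper's proof almost step for step: replace the random denominator $\pi(\Omega)$ by the deterministic $\sa{K_0}$ using \eqref{eqn:pisum} and \eqref{eqn:krounded}; rewrite $\csc\theta_{ST}$ as a distance ratio; condition on $\ghat_j$ and integrate out $g_j$ using Gaussian anticoncentration against the $\B$-truncation at scale $\epsilon=m^{-5d}$ (this is exactly Lemma \ref{lem:angles}); and then reduce to a $(d-2)$-perimeter estimate handled via shadow-vertex bounds (Lemma \ref{lem:perimeter} and Theorem \ref{thm:shadow}).

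Two places where your sketch drifts from what actually works. First, after the anticoncentration step the natural object is $|F_{ST}|K_{ST}^{(j)}$, not $|F_{ST}|K_{ST}$: the indicator $K_{ST}$ that $F_{ST}$ is a ridge of $K$ still depends on $g_j$, whereas $K_{ST}^{(j)}$ is $\ghat_j$-measurable and dominates $K_{ST}$. This is why the perimeter bound (Lemma \ref{lem:perimeter}) is stated for $K^{(j)}$, then summed over $j\le m$; your reduction directly to the $(d-2)$-perimeter of $K$ does not fall out of the conditioning. Second, and more substantively, your step (i) is incorrect as written: the unweighted $(d-2)$-perimeter $\sum_{S<T}|F_{ST}|$ is \emph{not} the quermassintegral $\binom{d}{2}V(K[d-2],B_2^d[2])$ --- by \eqref{eqn:quer2} the latter equals $\sum_{S<T}|F_{ST}|\theta_{ST}$, which carries the exterior-angle weights. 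Projection-area formulas likewise compute the weighted quantity, so the route ``$(d-2)$-perimeter $=$ mixed volume $=$ average shadow area $\le$ (shadow edges)$\times O(1)$'' does not go through. The paper instead constructs a bespoke probability measure on affine $2$-planes (Lemma \ref{lem:quadrature}) and proves a one-sided quadrature inequality --- the probability of hitting a small $(d-2)$-disk inside a ridge is bounded \emph{below} by a polynomial multiple of its volume --- which is what links the unweighted perimeter to the shadow-vertex count. Since you defer steps (i)--(iii) to Sections \ref{sec:perimeter}--\ref{sec:quadrature} anyway, your plan is salvageable, but the mixed-volume identification should be dropped.
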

\begin{proof}
\begin{align*}
    \E \frac{\sum_{S\in\Omega} \delta(S)}{\sum_{S\in\Omega}\pi(S)}\cdot \B\Ce &\le
    \denomconst\cdot \E \frac{\sum_{S\in\Omega} \delta(S)}{\vol_{d-1}(\partial K_0)}\cdot \B\Ce\quad\textrm{by \eqref{eqn:pisum} and \eqref{eqn:krounded}}\\
    &=\frac{\denomconst}{\vol_{d-1}(\partial K_0)}\cdot \E\left[ \Ce\cdot\sum_{S<T\in\all} \B |F_{ST}|\csc\theta_{ST}K_SK_TK_{ST}\right]\\
    &\le \frac{\denomconst\cdot\numconst}{\vol_{d-1}(\partial K_0)}\E\left[ \Ce\cdot\sum_{S<T\in\all} |F_{ST}|K^{(S\setminus T)}_{ST}\right]\quad\textrm{by Lemma \ref{lem:angles}}\\
	&\le \frac{\denomconst\cdot\numconst}{\vol_{d-1}(\partial K_0)}\E\left[ \Ce\cdot\sum_{j \le m}\sum_{S<T\in\all} |F_{ST}|K^{(j)}_{ST}\right]\\
	   &\le \frac{\denomconst\cdot\numconst}{\vol_{d-1}(\partial K_0)}\cdot\perimeterconst\cdot\sum_{j\le m}\sa{K_0^{(j)}} \quad\textrm{by Lemma \ref{lem:perimeter}}\\
	   &\le  \frac{\denomconst\cdot\numconst}{\vol_{d-1}(\partial K_0)}\cdot\perimeterconst\cdot m\sa{K_0} \quad\textrm{since $K_0^{(j)}\subset K_0$}\\
	   &\le m\cdot \denomconst\cdot\numconst\cdot\perimeterconst, 
\end{align*}
implying the desired conclusion.
\end{proof}

\iffalse
\begin{lemma}[$\pi$ in terms of Surface Area]\label{lem:denom} Assume $\aR,\aS$. Then with probability one:
$$ \frac{\Ce}{\sum_{S\in \Omega} \pi(S)}\le \denomconst\cdot \frac{\Ce}{\sa{K_0}}.$$
\end{lemma}
\begin{proof} On the event $\Ce$:
\begin{align*} 
\sum_{S\in\Omega} \pi(S) &= \sum_{S\in\Omega} \sum_{T\in\Omega} |F_{ST}|\tan(\theta_{ST}/2)\\
&\ge \sum_{S\in\Omega} \sum_{T\in\Omega} |F_{ST}|\theta_{ST}/2\\
&\ge \frac{1}{2}\left(\left(\frac{\vol_{d-1}(\partial K)}{d}\right)^{d-2}\cdot\vol(B_2^d)\right)^{1/(d-1)}\quad\textrm{by Alexandrov-Fenchel}\\
&\ge \Omega(1)\cdot (\vol_{d-1}(\partial K_0))^{\frac{d-2}{d-1}}\qquad\textrm{by \eqref{eqn:krounded} and monotonicity of surface area}\\
&\ge \Omega(1)\vol_{d-1}(\partial K_0)^{1-1/(d-1)}\\
&\ge \Omega(1)\vol_{d-1}(\partial K_0),
\end{align*}
since $\vol_{d-1}(\partial K_0)\le C\vol_{d-1}(\partial B_2^d)$ as $K_0\subset (1+1/d^2)B_2^d$ by Lemma \ref{lem:contain}.
\end{proof}
\fi

\begin{figure}
\centering
\includegraphics{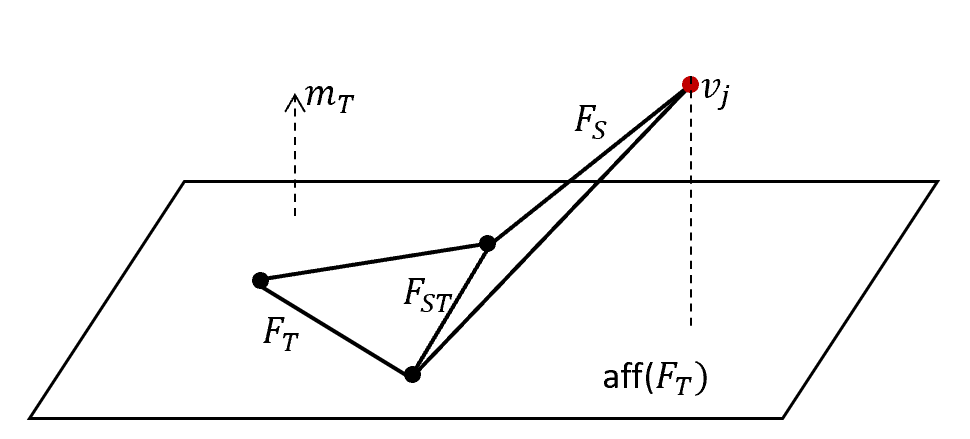}
  \caption{Proof of Lemma \ref{lem:angles}}
  \label{fig:boat1}
\end{figure}

\begin{lemma}[Angles Large On Average] \label{lem:angles}For every $S,T\in\all$ with $S\setminus T=\{j\}$:
\begin{equation}\label{eqn:oneangle} \E\cond{\B|F_{ST}| \csc\theta_{ST} K_{ST}}{\ghat_j,\Ce}\le \numconst\cdot \E \cond{|F_{ST}| K_{ST}^{(j)}}{\ghat_j,\Ce}.\end{equation}
\end{lemma}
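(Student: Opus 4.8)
\emph{Proof plan.} The idea is to carry out the conditional expectation over the single Gaussian $g_j$, after rewriting $\csc\theta_{ST}$ as a ratio of two distances --- the numerator deterministically bounded on $\Ce$, the denominator an affine image of $g_j$ (given $\ghat_j$), hence anticoncentrated --- with the truncation on $\B$ supplying the cutoff that tames the singularity of $\csc$.

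First I would record the easy reductions. Since $j\in S\setminus T$ we have $j\notin S\cap T$, so $F_{ST}=F_{S\cap T}=\conv(v_i:i\in S\cap T)$, its $(d-2)$-volume $|F_{ST}|$, and the indicator $K^{(j)}_{ST}$ are all $\ghat_j$-measurable; they therefore pull out of $\conde{\cdot}{\ghat_j,\Ce}$, and the right-hand side of \eqref{eqn:oneangle} is simply $O(d\log m/\sigma)\cdot|F_{ST}|\,K^{(j)}_{ST}$. Moreover $K_{ST}\le K^{(j)}_{ST}$ holds pointwise: if $F_{S\cap T}$ is a $(d-2)$-face of $K$ it equals $K\cap H$ for a supporting hyperplane $H$, and since $F_{S\cap T}\subseteq K^{(j)}\subseteq K$ we get $K^{(j)}\cap H=F_{S\cap T}$, so it is also a $(d-2)$-face of $K^{(j)}$. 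Hence if $K^{(j)}_{ST}=0$ the left side of \eqref{eqn:oneangle} vanishes; otherwise, bounding $K_{ST}\le 1$ and dividing by $|F_{ST}|$, it remains to show
\[\conde{\B\,\csc\theta_{ST}}{\ghat_j,\Ce}\ \le\ O(d\log m/\sigma).\]

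Next I would invoke the elementary dihedral-angle identity: almost surely $\aff(F_S)\cap\aff(F_T)=\aff(F_{S\cap T})$ is a $(d-2)$-flat (on $\B$ this uses $v_j\notin\aff(F_T)$), and with $v_j$ the vertex of $F_S$ off $\aff(F_T)$ (see Figure~\ref{fig:boat1}),
\[\csc\theta_{ST}=\frac{\dist\bigl(v_j,\aff(F_{S\cap T})\bigr)}{\dist\bigl(v_j,\aff(F_T)\bigr)}.\]
On $\Ce$ every $v_i$ lies in $2B_2^d$ (as $\|a_i\|\le1$ and $\|g_i\|\le\alpha<1$), so picking $i_0\in S\cap T$ the numerator is at most $\|v_j-v_{i_0}\|\le 4$, and the denominator is at most $4$ as well; on $\B$, applying its definition to the pair $(T,j)$, the denominator is at least $\epsilon=m^{-5d}$. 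Thus on $\B\cap\Ce$,
\[\csc\theta_{ST}\ \le\ \frac{4}{\dist(v_j,\aff(F_T))}\cdot\one\bigl\{\epsilon\le\dist(v_j,\aff(F_T))\le 4\bigr\}.\]

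Finally I would prove the anticoncentration estimate. Conditioned on $\ghat_j$, the hyperplane $\aff(F_T)$ is fixed, so $\dist(v_j,\aff(F_T))=|\mu+Z|$ with $\mu$ a constant and $Z=\langle g_j,u_T\rangle$ for the unit normal $u_T$; conditioning further on $\Ce$ restricts $g_j$ to the ball $\|g_j\|\le\alpha$. Integrating out the component of $g_j$ orthogonal to $u_T$ shows the conditional density of $Z$ is at most $2/(\sigma\sqrt{2\pi})$, since $\P[\|g_j\|\le\alpha]\ge\tfrac12$ (because $\alpha/\sigma=6\sqrt{d\log m}$ far exceeds $\E\|g_j\|/\sigma\approx\sqrt d$), so the conditioning inflates the Gaussian density by at most a factor $2$; hence $\dist(v_j,\aff(F_T))$ has conditional density at most $4/(\sigma\sqrt{2\pi})$, and
\[\conde{\B\,\csc\theta_{ST}}{\ghat_j,\Ce}\ \le\ 4\int_{\epsilon}^{4}\frac1t\cdot\frac{4}{\sigma\sqrt{2\pi}}\,dt\ =\ \frac{16}{\sigma\sqrt{2\pi}}\log\frac{4}{\epsilon}\ =\ O\!\left(\frac{d\log m}{\sigma}\right),\]
which is the bound we wanted. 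The step needing the most care is this last one: $\E[\csc\theta_{ST}]$ a priori has a non-integrable singularity where $\aff(F_S)$ and $\aff(F_T)$ become parallel, and it is precisely the lower cutoff $\epsilon$ from $\B$ together with the upper cutoff from $\Ce$ (which keeps $\int dt/t$ merely logarithmic) that render it finite; one must also check that conditioning on $\Ce$ distorts the one-dimensional marginal of $g_j$ by only a constant. Everything else is routine bookkeeping.
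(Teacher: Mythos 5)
Your proof is correct and follows essentially the same route as the paper's: the same dihedral-angle identity $\csc\theta_{ST}=\dist(v_j,\aff(F_{ST}))/\dist(v_j,\aff(F_T))$, the same observation that $|F_{ST}|$ and $K^{(j)}_{ST}$ are $\ghat_j$-measurable with $K_{ST}\le K^{(j)}_{ST}$, and the same anticoncentration of the one-dimensional marginal $\langle g_j,u_T\rangle$ combined with the cutoffs from $\B$ and $\Ce$ to yield the logarithmic integral $\int_\epsilon^{O(1)} dt/(\sigma t)=O(d\log m/\sigma)$. The only differences are immaterial constants and your slightly more explicit justification that conditioning on $\|g_j\|\le\alpha$ inflates the density by at most a factor of $2$.
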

\begin{proof}
By trigonometry,
$$ \csc\theta_{ST}=\frac{\dist(v_j,\aff(F_{ST}))}{\dist(v_j,\aff(F_T))}\le \frac{3}{\dist(v_j,\aff(F_T))},$$
conditional on $\Ce$, since $K$ has diameter at most $2+2\alpha\le3$.
The distance in the denominator can be rewritten as
\[\dist(v_j,\aff(F_T))=\dist(g_j+a_j,\aff(F_T))=\dist(g_j,\aff(F_T)-a_j)=|h_j-x_T|\]
where $h_j=\herm{g_j}{m_T}$ and $x_T=\dist(0,\aff(F_T)-a_j)\le 4$ for $m_T$ the unit normal to $\aff(F_T)$.
Moreover,
$$ |F_{ST}|K_{ST}\le |F_{ST}|K^{(j)}_{ST}$$
with probability one conditional on $\ghat_j$ since $S\cap T\in\facets_{d-2}(K)$ implies $S\cap T\in\facets_{d-2}(K^{(j)})$ as $j\notin S\cap T$. Combining these facts, the left hand side of \eqref{eqn:oneangle} is at most 
\begin{align*}
     \E \cond{ \B |F_{ST}| K^{(j)}_{ST} \frac{3}{\dist(v_j,\aff(F_T))}}{\ghat_j,\Ce} 
     %&= 2|F_{ST}| K^{(j)}_{ST} \E \cond{\B \frac{1}{\dist(g_j, a_j+\aff(F_T))}}{\ghat_j,\Ce}\\
     &= 3|F_{ST}| K^{(j)}_{ST} \E \cond{\B \frac{1}{|h_j-x_T|}}{\ghat_j,\Ce},
\end{align*}
Notice that $h_j$ has density on $\R$ bounded by
$$t\mapsto \frac{1}{\sqrt{2\pi}\sigma}e^{-t^2/2\sigma^2}\frac1{\P[\|g_j\|\le \alpha]}\le \frac{1}{\sigma},$$
and $\epsilon\le|h_j-x_T|\le|h_j|+x_T\le 4+\alpha<5$ conditioned on $\mathcal{B},\Ce$, so the last conditional expectation is at most
$$3\int_{\epsilon}^5 \frac1{\sigma t}dt = 2(\log(1/\epsilon)+\log 5)/\sigma \le O(d\log m/\sigma),$$
completing the proof.
\end{proof}
The most technical part of the proof is the following $(d-2)$-perimeter estimate, whose proof is deferred to Section \ref{sec:perimeter}. The conceptual meaning of this estimate is that on average, the $(d-2)$-dimensional surface area of a random facet of $K^{(j)}$ is well-bounded by its $(d-1)$-dimensional volume.
\begin{lemma}[Codimension $2$ Perimeter versus Surface Area]\label{lem:perimeter} Assume $\aR,\aS$. For every $j\in [m]$:
	$$\E \Ce \sum_{S<T\in\all} |F_{ST}| K_{ST}^{(j)} \le \perimeterconst\cdot \sa{K_0^{(j)}}.$$
\end{lemma}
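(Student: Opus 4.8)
The goal is to bound $\E\,\Ce\sum_{S<T}|F_{ST}|K_{ST}^{(j)}$ — the expected total $(d-2)$-volume of all codimension-2 faces of $K^{(j)}$ — by $\poly(m,d,1/\sigma,1/r)$ times the surface area $\sa{K_0^{(j)}}$. Since $j$ is fixed and $K^{(j)}$ depends only on $\ghat_j$, I would work entirely conditionally on $\ghat_j$ (so $K^{(j)}$ is deterministic) and only at the end take the outer expectation and sum over $j$ as done in Lemma \ref{lem:main}. The left side rewrites as $\sum_{F\in\facets_{d-2}(K^{(j)})}\vol_{d-2}(F)$, the $(d-2)$-perimeter of $K^{(j)}$. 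The natural strategy is to relate this quantity to the $(d-1)$-perimeter (the surface area) of $K^{(j)}$, which is itself $O(1)$-comparable to $\sa{K_0^{(j)}}$ on $\Ce$ by the Hausdorff bound \eqref{eqn:hdist}, Lemma \ref{lem:contain}, and monotonicity of surface area under containment — exactly as in the proof of Lemma \ref{lem:degeneracy}(2). So the crux is: bound the $(d-2)$-perimeter of a random simplicial polytope in terms of its surface area.

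The main tool should be a quermassintegral / Alexandrov–Fenchel argument combined with an averaging (integral-geometric) identity. Each codimension-2 face $F_{ST}$ of $K^{(j)}$ is the intersection of two facets $F_S,F_T$; its $(d-2)$-volume is governed by $\vol_{d-2}(\partial F_S)$ summed over facets. Thus $\sum_{S<T}|F_{ST}| = \tfrac12\sum_{S\in\Omega(K^{(j)})}\vol_{d-2}(\partial F_S)$, reducing the problem to bounding the surface area (as a $(d-1)$-dimensional polytope) of each facet $F_S$ and summing. For each facet $F_S$ one expects $\vol_{d-2}(\partial F_S)\lesssim \diam(F_S)\cdot\vol_{d-2}(\text{cross-sections})$, but a cleaner route is the isoperimetric-type inequality for the facet itself, or rather its reverse: a facet can be badly shaped (long and thin), so one cannot bound perimeter by volume facetwise without extra input. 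This is where the smoothing must be used: the Gaussian perturbation prevents facets from being too degenerate on average. Concretely, I would (i) use the event $\B$ (lower bound $\epsilon$ on vertex-to-affine-hull distances) together with Lemma \ref{lem:inradius} to control the in-radius of each face, and (ii) bound $\diam(F_S)\le\diam(K^{(j)})=O(1)$ on $\Ce$, so that $\vol_{d-2}(\partial F_S)\le (\text{number of ridges of }F_S)\cdot O(1)^{d-2}\le \binom{m}{d-2}\cdot O(1)^{d-2}$, giving a crude $\binom{m}{d-1}\binom{m}{d-2}O(1)^d$ bound on the $(d-2)$-perimeter. That would yield a polynomial bound but with a worse power of $m$ than the advertised $m^2 d^{9/2}\log^{5/2}m/\sigma r$; to get the stated $m^2$, one needs to be smarter — presumably charging each ridge $F_{ST}$ to the facet $F_S$ and using a shadow-vertex / expected-facet-count bound (the referenced \cite{dadush2020friendly} bound) to control the expected number of facets meeting a given facet, rather than the worst-case $\binom{m}{d-1}$.

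So the refined plan is: (1) condition on $\ghat_j$ and on $\Ce\cap\B$; (2) write the $(d-2)$-perimeter as $\tfrac12\sum_S\vol_{d-2}(\partial F_S)$ and, for each ridge $F_{ST}\subset\partial F_S$, bound $\vol_{d-2}(F_{ST})$ by $O(1)$ times the $(d-2)$-volume of a projection, using $\diam\le O(1)$; (3) for the combinatorial count $\#\{T: F_{ST}\in\facets_{d-2}(K^{(j)})\}$, invoke a shadow-vertex expected-facet bound — this is the step that should produce a factor like $m\cdot\poly(d,\log m,1/\sigma)$ rather than $\binom{m}{d-1}$ — quite possibly by projecting onto a 2-plane and counting edges of the shadow, which is the standard smoothed-analysis device; (4) relate the resulting surface area of $K^{(j)}$ to $\sa{K_0^{(j)}}$ via $\hdist(K^{(j)},K_0^{(j)})\le\alpha$, $rB_2^d\subset K_0^{(j)}$, Lemma \ref{lem:contain}, and monotonicity; (5) take the outer expectation, where $\B$ has probability $\ge 1-1/m^3$ and contributes only lower-order terms. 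I expect the main obstacle to be step (3): obtaining the correct power of $m$ for the expected number of codimension-2 faces (equivalently, the expected complexity of the random simplicial polytope $K^{(j)}$ restricted near a given facet) — this is precisely where the shadow-vertex bound of \cite{dadush2020friendly} and Gaussian anticoncentration must be deployed carefully, and it is flagged in the excerpt as "the most technical part," deferred to Section \ref{sec:perimeter}.
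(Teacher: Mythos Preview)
Your plan has two genuine gaps.

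First, the intention to ``work entirely conditionally on $\ghat_j$ (so $K^{(j)}$ is deterministic)'' is incompatible with step (3). The shadow-vertex bound of \cite{dadush2020friendly} is an expectation over the Gaussian perturbations of the vertices --- and for $K^{(j)}$ that randomness \emph{is} $\ghat_j$. Once you condition on $\ghat_j$, $K^{(j)}$ is a fixed simplicial polytope, and the ratio of its $(d-2)$-perimeter to its surface area can be arbitrarily large (many long thin facets). So the bound cannot hold conditionally; the expectation over $\ghat_j$ must be used \emph{inside} the argument, not tacked on at the end.

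Second, and more important, the decomposition $\sum_{S<T}|F_{ST}|=\tfrac12\sum_S\vol_{d-2}(\partial F_S)$ followed by a per-facet neighbor count does not connect to the shadow-vertex bound in the way you need. That bound controls the number of vertices of a two-dimensional \emph{slice} $W\cap K^{(j)}$ --- equivalently, how many $(d-2)$-faces of $K^{(j)}$ a fixed $2$-plane $W$ meets --- not the number of ridges incident to a fixed facet. Bounding each $\vol_{d-2}(F_{ST})\le O(1)^{d-2}$ and then counting ridges loses exponentially in $d$ against $\sa{K_0^{(j)}}$ and cannot be rescued by a combinatorial count. The paper instead inserts an integral-geometric (Crofton-type) step, Lemma~\ref{lem:quadrature}: it builds a distribution on $2$-planes $W$ (depending only on $K_0^{(j)}$, hence independent of $\ghat_j$) for which $\P[W\cap F_{ST}\neq\emptyset]\gtrsim \vol_{d-2}(F_{ST})/\sa{K_0^{(j)}}$ uniformly over all $(d-2)$-faces of any $L$ Hausdorff-close to $K_0^{(j)}$. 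Summing and swapping the $W$-integral with $\E_{\ghat_j}$ turns $\sum_{S<T}|F_{ST}|K_{ST}^{(j)}$ into (a constant times $\sa{K_0^{(j)}}$ times) the expected number of $(d-2)$-faces hit by $W$; this is at most $m^2\cdot\E|\facets_0(W\cap K^{(j)})|$, and the latter is exactly what the shadow-vertex theorem bounds. The quadrature lemma is the missing idea: it is what lets the \emph{volumes} $|F_{ST}|$ themselves --- not just their number --- be captured by the vertex count of a random slice.
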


\subsection{Proof of Lemma \ref{lem:perimeter}}\label{sec:perimeter}
\newcommand{\Vol}{\mathrm{Vol}}

The key step in the proof is to show that for any well-rounded polytope $L_0$,
there a distribution on two-dimensional planes $W$ such that the
$(d-2)$-perimeter of every nearby polytope $L$ is accurately reflected in the
average number of vertices of $W\cap L$. Since this number of vertices is small
in expectation by \cite{dadush2020smoothed}, we can then conclude that the codimension $2$ perimeter is small.

{\em In this section and the next only},  the variable $\epsilon$ will refer to a quantity tending to zero (as opposed to the definition \eqref{eqn:epsdef}).

\newcommand{\quadbound}{\Omega\left(\frac{r_1}{d^{3/2}r_2\eta}\right)}
\begin{lemma}[Quadrature by Planes]\label{lem:quadrature} Let $r_1 B_2^d\subset L_0\subset r_2 B_2^d$, where $1 \in [r_1, r_2]$. There there is a probability distribution  on two dimensional planes $W$ in $\R^d$ such that for sufficiently small $\epsilon>0$ the following holds uniformly over every polytope $L$ with at most $m^d$ facets satisfying
\begin{equation}\label{eqn:lclose} \hdist(L,L_0)\le \eta<\frac{r_1}{2d}:\end{equation} every $(d-2)$-dimensional disk $\Se$ of radius $\epsilon$ contained in the interior of a $(d-2)$-dimensional face of $L$ satisfies
	$$\sa{L_0}\cdot \P [W\cap \Se\neq\emptyset] \ge \quadbound\cdot\vol_{d-2}(\Se).$$
Moreover, for every $(d-2)$-dimensional affine subspace $H\subset \R^d$, $\P[W\cap H > 1] = 0$.
\end{lemma}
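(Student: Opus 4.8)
The plan is to build $W$ from integral geometry, so that the hitting probability can be read off a Crofton-type formula. Let $A(d,2)$ denote the space of affine $2$-flats in $\R^d$ with its motion-invariant measure $\mu$, which disintegrates as $\mu(dE)=\nu(dU)\,\lambda_{U^\perp}(dw)$, where $U=\mathrm{dir}(E)\in\mathrm{Gr}(d,2)$, $\nu$ is the invariant probability measure on the Grassmannian, and $w\in U^\perp$ (so $E=U+w$). First I would pin down where an admissible $\Se$ can sit: applying Lemma~\ref{lem:contain} with $r=r_1$ to $\hdist(L,L_0)\le\eta$ gives $(1-2\eta/r_1)L_0\subset L\subset(1+\eta/r_1)L_0$, whence $\partial L\subset S_\eta:=\{x:\dist(x,\partial L_0)\le\eta\}$; thus every admissible $\Se$ lies in $S_\eta$, and (using $L\supset(r_1/2)B_2^d$ and $\eta<r_1/2d$) any $2$-flat spanning $\Se$'s face only grazes $L_0$, i.e.\ lies within $\eta$ of a supporting hyperplane of $L_0$. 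Accordingly I would take $W$ to have law $\mu(\cdot\cap\mathcal E)/\mu(\mathcal E)$, where $\mathcal E\subset A(d,2)$ is the set of $2$-flats that meet $S_\eta$ and lie within $\eta$ of some supporting hyperplane of $L_0$ (equivalently, have a short chord in $L_0$); the point of the grazing condition is to shrink $\mu(\mathcal E)$ to order $\eta$ times a quermassintegral of $L_0$ rather than order $V_{d-2}(L_0)$, while still retaining every $2$-flat that can meet an admissible $\Se$.

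The hitting probability then comes out of the Cauchy--Crofton formula. Fix $\Se$ with direction $\Pi$; since $2+(d-2)=d$, the set of $U\in\mathrm{Gr}(d,2)$ with $U\cap\Pi\ne\{0\}$ is $\nu$-null, so for $W$-a.e.\ flat $E$ the intersection $E\cap\Se$ is empty or a single point, hence $\#(W\cap\Se)\in\{0,1\}$ a.s.\ and $\P[W\cap\Se\ne\emptyset]=\E\,\#(W\cap\Se)$. Combining the formula $\int_{A(d,2)}\#(\Se\cap E)\,\mu(dE)=\gamma_d\vol_{d-2}(\Se)$ (explicit $\gamma_d$) with the fact that a definite portion of the flats through $\Se$ lies in $\mathcal E$ yields
\[\P[W\cap\Se\ne\emptyset]\;\ge\;\frac{c\,\gamma_d}{\mu(\mathcal E)}\,\vol_{d-2}(\Se).\]
It remains to bound $\mu(\mathcal E)$ from above. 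Here one uses that $\{E:E\cap S_\eta\ne\emptyset\}=\{E:E\cap(L_0+\eta B_2^d)\ne\emptyset\}$ (a $2$-flat meeting $\mathrm{int}(L_0)$ must cross $\partial L_0$), so this set has $\mu$-mass a quermassintegral of $L_0+\eta B_2^d$; imposing the grazing condition costs the advertised factor, which one sees by passing to the projections $\pi_{U^\perp}L_0$ and invoking Kubota's formula; and $r_1B_2^d\subset L_0\subset r_2B_2^d$ lets one compare all the quermassintegrals involved with those of Euclidean balls. Arranging the bookkeeping so that $\mu(\mathcal E)=O\!\big(\gamma_d^{-1}d^{-3/2}r_2\eta\,\sa{L_0}/r_1\big)$ is exactly the assertion $\sa{L_0}\,\P[W\cap\Se\ne\emptyset]\ge\quadbound\cdot\vol_{d-2}(\Se)$.

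For the final sentence: for a fixed $(d-2)$-dimensional affine subspace $H$, $|W\cap H|>1$ forces $\mathrm{dir}(W)$ to share a line with $\mathrm{dir}(H)$, and the set of such directions is $\nu$-null; since the law of $\mathrm{dir}(W)$ is absolutely continuous with respect to $\nu$ (it is $\nu$ reweighted by a density, which changes no null sets), this event has probability $0$.

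The main obstacle is the design of $\mathcal E$ and the attendant integral-geometric accounting: the grazing threshold must be calibrated so that (i) $\mathcal E$ still captures a fixed fraction of the $2$-flats meeting every admissible $\Se$, (ii) $\mu(\mathcal E)$ carries precisely the claimed powers of $\eta,r_1,r_2,d$, and (iii) the estimate is uniform over all $\eta$-close polytopes $L$ with at most $m^d$ facets --- the delicate case being $\Se$ sitting near a flat facet of $L_0$, where the grazing $2$-flats through $\Se$ occupy only a thin sliver of directions and the counting is tightest.
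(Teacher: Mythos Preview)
Your construction is genuinely different from the paper's, and the difference is where the gap lies. The paper does \emph{not} restrict the invariant measure on $A(d,2)$; instead it takes $W=V+a$ where $a$ is sampled from the surface measure on $\partial(L_0+2\eta B_2^d)$ and $V$ is a uniform $2$-subspace, and then compares this (via an explicit radial--projection Jacobian) to the easier situation where $a$ is uniform on the unit sphere centered at the center of $\Se$. The factor $1/\eta$ comes out of that Jacobian computation, not from restricting the measure.

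The step in your plan that does not go through is the assertion that ``a definite portion of the flats through $\Se$ lies in $\mathcal E$''. Take $\Se$ sitting near the relative interior of a facet of $L_0$ (which is the generic situation: a $(d-2)$-face of the perturbed $L$ need not be close to any $(d-2)$-face of $L_0$). A $2$-flat $E=U+y$ through a point $y\in\Se$ has a ``short chord in $L_0$'' only if its direction $U$ is essentially contained in the tangent hyperplane at the nearby point of $\partial L_0$; under the Haar measure on $\mathrm{Gr}(d,2)$ this is an event of probability $O(\eta/r_1)$ (indeed, an infinite $2$-flat lies within $\eta$ of a hyperplane only if it is exactly parallel to it, so your literal formulation has measure zero; the ``short chord'' reading gives $O(\eta/r_1)$). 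Thus the numerator $\mu\{E\in\mathcal E:E\cap\Se\ne\emptyset\}$ shrinks by the same factor of $\eta$ that you gain in the denominator $\mu(\mathcal E)$, and the ratio carries no $1/\eta$. You correctly flagged this as ``the delicate case'', but it is not a corner case to be handled --- it is the mechanism by which the grazing restriction fails to produce the bound.

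One way to see why the paper's construction succeeds where a restricted invariant measure does not: conditioning on the direction $U$, the paper's $W$ has translate distributed as $\mathrm{proj}_{U^\perp}(a)$ for $a$ uniform on $\partial(L_0+2\eta B_2^d)$, which is \emph{not} uniform on the shadow but is weighted by the amount of boundary over each fiber --- heavily near the rim. This is a reweighting, not a restriction, and no mass is thrown away; that is what lets the hitting probability scale like $1/\eta$ without a compensating loss in the numerator.

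Your argument for the ``Moreover'' clause (absolute continuity of $\mathrm{dir}(W)$ with respect to Haar) is fine and applies equally to the paper's construction.
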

The proof of Lemma \ref{lem:quadrature} is deferred to Section \ref{sec:quadrature}.  
%In order to apply the lemma to $K_0^{(j)}$ and %$K^{(j)}$, we need to know that $K_0^{(j)}$ is %well-rounded, which may not be true in general. We %achieve roundedness by instead viewing $K^{(j)}$ as a %$N(0,\sigma^2(m^{-C_2}))$-perturbation of %$K_1^{(j)}=\conv(a_i+g'_i:i\neq j)$ where $g'_i\sim %N(0,\sigma^2(1 - m^{-C_2})$, and showing that %$K_1^{(j)}$ is well-rounded with high probability.

%\begin{comment}
%\end{comment}

We rely on the following result of Dadush and Huiberts \cite[Theorem 1.13]{dadush2020smoothed} (they prove something a little stronger, but we use a simplified bound).
\begin{theorem}[Shadow Vertex Bound]\label{thm:shadow} Suppose $W$ is a fixed two dimensional plane and $Q=\conv\{v_1,\ldots,v_m\}$ where $v_i\sim N(a_i,\sigma^2 I)$ with $\|a_i\|\le 1$. Then
\begin{equation}\label{eqn:dadush} \E [|\facets_0(W\cap Q)|]=O(d^{2.5}\log^2(m)/\sigma^2).
\end{equation}
\end{theorem}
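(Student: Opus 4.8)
The statement is the smoothed shadow-size bound of Dadush and Huiberts \cite{dadush2020smoothed}; my plan is to follow the strategy common to all such bounds (originating in \cite{spielman2004smoothed}), organized so as to reach the stated exponents. \emph{Normalization.} Since $W$ is deterministic and the Gaussian $N(0,\sigma^2 I)$ is rotation invariant, I would first rotate $\R^d$ so that the linear part of $W$ is $\spn(e_1,e_2)$ — the centers $a_j$ move but keep $\|a_j\|\le 1$ — and assume $\sigma\le 1$, since for $\sigma>1$ one may rescale $v_j\mapsto v_j/\sigma$ without changing $|\facets_0(W\cap Q)|$ and only improving the target. \emph{Combinatorics.} Almost surely $Q$ is simplicial and $W$ is in general position for it, so $W\cap Q$ is a convex polygon in $W\cong\R^2$ whose vertices are exactly the points $\aff(F_S)\cap W$ as $F_S=\conv\{v_i:i\in S\}$ ranges over the $(d-2)$-faces of $Q$ (those with $|S|=d-1$) that meet $W$, while each of its edges is $F_T\cap W$ for a facet $F_T$ of $Q$. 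Thus $|\facets_0(W\cap Q)|$ equals the number of $(d-2)$-faces of $Q$ crossing $W$, equivalently the number of edges of the polygon $W\cap Q$. (After a bounded affine change of coordinates placing the origin in $\mathrm{int}\,Q$ and passing to polars, this is also the number of vertices of the classical $2$-dimensional shadow $\pi_W(Q^\circ)$ of the smoothed-LP polyhedron $Q^\circ=\{x:\langle v_j,x\rangle\le 1\}$, the object treated in the cited literature; but one may argue directly on $W\cap Q$.)

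\emph{The sweep.} I would count these edges by parametrizing the boundary of $W\cap Q$ by its outer-normal angle $\theta\in[0,2\pi)$: let $p(\theta)$ be the a.s.\ unique maximizer of $\langle\cdot,(\cos\theta,\sin\theta)\rangle$ over $W\cap Q$ and $S(\theta)$ the supporting $(d-2)$-face. The number of polygon edges equals the number of distinct values of $S(\cdot)$, i.e.\ the number of angles at which $S$ ``transitions''; since the vertex normal cones tile $S^1$, $\E\,|\facets_0(W\cap Q)|=\int_0^{2\pi}\rho(\theta)\,d\theta$ for a transition density $\rho$, so it suffices to bound $\rho(\theta)$ for each fixed $\theta$ by $O(\poly(d)\,\mathrm{polylog}(m)/\sigma^2)$. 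I would do this on the high-probability event analogous to $\B\cap\Ce$ used elsewhere in the paper — $\max_j\|g_j\|\le\alpha$ and every vertex of $Q$ at distance $\ge\eps=m^{-5d}$ from the affine span of any $d-1$ others — on which $Q$ has diameter $O(1)$, all facet normals of $Q^\circ$ have norm $O(1)$, and the relevant $(d-2)$-faces are non-degenerate. On this event a transition in a window $[\theta,\theta+t]$ forces the incoming shadow edge at $\theta$ to be ``$t$-degenerate'': its turning angle at $p(\theta)$ is within $t$ of flat, equivalently the current optimal vertex is within $t$ of being degenerate along the edge into which it is about to move — such an edge must be both short and nearly flat.

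\emph{Anticoncentration and the main obstacle.} The decisive step, and the one I expect to be hard, is to show this $t$-degeneracy has probability $O(t)\cdot\poly(d)/\sigma^2$. The plan is to condition on all of the randomness except the single ``incoming'' datum — the vertex $v_k$ (dually, the constraint $\langle v_k,\cdot\rangle\le 1$) that becomes active just past $\theta$ — and verify that, given everything else, the two quantities governing the degeneracy (the length of the incoming edge and its turning angle) are controlled by scalar Gaussian linear functionals of the flanking vertices, each with conditional density $\le 1/\sigma$ up to a bounded normalization coming from $\Ce$; anticoncentration in the two coordinates then produces the $\sigma^{-2}$. The subtle point — present in every smoothed shadow bound since \cite{spielman2004smoothed} — is that the index $k$ must be \emph{determined} by the conditioning, not union-bounded over $[m]$, so that one pays only $\mathrm{polylog}(m)$ (from the high-probability events $\B,\Ce$) rather than a power of $m$. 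Making this conditioning rigorous, together with the accompanying two-dimensional ``edge length versus turning angle'' estimate, is exactly where the technical work lies and where the sharp exponents $d^{2.5}$, $\log^2 m$, $\sigma^{-2}$ are extracted; I would quote the resulting inequality in the simplified form \eqref{eqn:dadush} directly from \cite[Theorem 1.13]{dadush2020smoothed}.
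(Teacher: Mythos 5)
The paper does not prove this statement at all: it is imported verbatim as a black box, namely a simplified form of \cite[Theorem 1.13]{dadush2020smoothed}, and your proposal, after sketching the sweep/anticoncentration strategy underlying that result, ends by quoting exactly the same citation. So the operative step is identical to the paper's; your summary of the Dadush--Huiberts argument is a reasonable gloss on the cited proof but is not required for the role this theorem plays here.
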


Combining these two ingredients, we can prove Lemma \ref{lem:perimeter}.
\begin{proof}[Proof of Lemma \ref{lem:perimeter}] Fix $j\le m$ and recall that 
$rB_2^d\subset K_0^{(j)}\subset B_2^d$ by $\aR$. Conditioning on $\Ce$, we also have $\hdist(K^{(j)},K_0^{(j)})\le \alpha$. Thus we may invoke Lemma \ref{lem:quadrature} with $L_0=K_0^{(j)}$, $L=K^{(j)}$, $r_1=r$, $r_2=1$, and $\eta=\alpha=\Omega(\sigma\sqrt{d\log m})$ to obtain a probability measure $\nu$ on two dimensional planes $W\subset \R^d$ with the advertised properties; note that crucially $W$ depends only on $K_0$ and is independent of $K$. Let $I_\epsilon$ be a maximal collection of disjoint $(d-2)$-dimensional disks $S_\epsilon$ of radius $\epsilon$, with each $S_\epsilon$ contained in some $(d-2)$-face of $L$.
Notice that
\begin{align*}
&\int d\nu(W) \E\sum_{S<T\in\all}  [ \Ce\{|W\cap F_{ST}|\neq 0\} K_{ST}^{(j)}]\\
&= \E  \sum_{S<T\in\all} \int d\nu(W)  \Ce\{|W\cap F_{ST}|\neq 0\}  K_{ST}^{(j)}\\
&\ge \E  \sum_{S<T\in\all} \int d\nu(W)  \Ce\sum_{S_\epsilon\in I_\epsilon}\{|W\cap S_\epsilon|\neq 0\}  K_{ST}^{(j)}\quad\textrm{by the ``Moreover'' part of Lemma \ref{lem:quadrature}}\\
&\ge \E \sa{L_0}^{-1}\quadbound\cdot  \left[\Ce  \sum_{S<T\in\all} |F_{ST}| K_{ST}^{(j)}\right]\quad\textrm{by Lemma \ref{lem:quadrature}, choosing $\epsilon$ sufficiently small.}
\end{align*}
	
	The integrand in the first expression above above is at most $$m^2\cdot \E [\facets_0(W\cap K)\Ce]$$ since each set in $\binom{[m]}{d-2}$ appears as the intersection of at most $m^2$ adjacent pairs $S,T$. Therefore by Theorem \ref{thm:shadow} the first expression above is bounded above by $O(m^2d^{5/2}\log^2(m)/\sigma^2)$. Rearranging yields
	
	$$ \E\left[\Ce  \sum_{S<T\in\all} |F_{ST}| K_{ST}^{(j)}\right] \le O(m^2d^{2.5}\log^2(m)/\sigma^2)\cdot O(d^{3/2}\sigma\sqrt{d\log m}/r) \sa{K_0^{(j)}},$$
	implying the desired conclusion.
\end{proof}
\subsection{Proof of Lemma \ref{lem:quadrature}}\label{sec:quadrature}
\iffalse
\begin{tikzpicture}
  %\shade[ball color = gray!40, opacity = 0.4] (0,0) circle (2cm);
  \draw (0,0) circle (2cm);
  %\draw (-2,0) arc (180:360:2 and 0.6);
  %\draw[dashed] (2,0) arc (0:180:2 and 0.6);
  %\draw (-2,0) arc (180:360:2 and 0.6);
\node [fill=black, shape=circle, inner sep=0.75pt, label=below:$S_\epsilon$] (fp) at 
(0,0) {};
  %\draw[dashed] (0,0 ) -- node[above]{$r$} (2,0);
  \draw (-0.2,0.3) -- (0.2,-0.3);
  \draw(-0.2,0.3) -- (2, 0);
  \draw(0.2,-0.3) -- (2, 0);
  \node[fill=black, shape=circle, inner sep=0.75pt, label=below:$a'$] at (2,0) {};

  \draw(-0.2,0.3) -- (5, 0);
  \draw(0.2,-0.3) -- (5, 0);
  \node[fill=black, shape=circle, inner sep=0.75pt, label=below:$a$] at (5,0) {};
\end{tikzpicture}

\begin{tikzpicture}
  \draw(0,0) circle (2cm);
\node [fill=black, shape=circle, inner sep=0.75pt] (fp) at 
(0,0) {};
  \draw(2-0.5,0-0.4)--(2+0.5,0+0.4);
  \draw[dashed](0,0)--(1.93,-0.515);
  \draw[dashed](0,0)--(2.5,0.4);
  \node[fill=black, shape=circle, inner sep=0.75pt] (fp) at 
(1.93,-0.515) {};
  \node[fill=black, shape=circle, inner sep=0.75pt] (fp) at 
(1.97,0.31) {};
  \node[fill=black, shape=circle, inner sep=0.75pt] (fp) at 
(2.5,0.4) {};
  \node[fill=black, shape=circle, inner sep=0.75pt] (fp) at 
(1.5,-0.4) {};
  \draw[thick](1.93,-0.515) arc (-15.1:9.1:2)
  %\draw(1.93, −0.51) circle (1cm);
\end{tikzpicture}
\fi

\newcommand{\Kt}{\tilde{L}}
\newcommand{\Bkt}{\partial \Kt}
\newcommand{\Bx}{B_x}
\newcommand{\Bbx}{\partial\Bx}
We provide an explicit construction for the distribution of $W$.
Let $\Kt=L_0+2\eta B_2^d$ and note that its boundary $\Bkt$ is smooth; let $\psi$ be the $d-1$-dimensional surface measure on $\Bkt$. This equals both the $d-1$ dimensional Hausdorff measure and the Minkowski content of $\Bkt$.
Then let $W=V+a$ where $a$ is a point sampled according to $\psi$, and $V$ is sampled by taking the span of two Gaussian vectors (or any radially symmetric random vectors).  In order to compute $\P(V+a\cap\Se\neq\emptyset)$, it will help to first reduce it to the related probability $\P(W'\cap\Se\neq\emptyset)$ for $W=V+a'$ where $a'$ is sampled uniformly from the unit ball which shares a center with $\Se$.
In particular, let $x$ be the center of $\Se$ and denote $\Bx=B_2^d+x$. Let $\psi'$ be the $d-1$-dimensional Hausdorff measure on $\Bbx$.  Then we will reduce to the case of $\P(V+a'\cap\Se\neq\emptyset)$ for $a'$ sampled according to $\psi'$.
For any $z$, define the radial projection $\Pi_z$ by \[\Pi_z(y)=\frac{y-z}{\|y-z\|}+z.\]
Note that $\Pi_x$ is a bijection between $\Bkt$ and $\Bbx$ since every ray originating from $x$ intersects $\Bkt$ in exactly one point because $x$ is in the interior of $\Kt$, which is convex. 

\begin{figure}
%\centering
\includegraphics[scale=0.42]{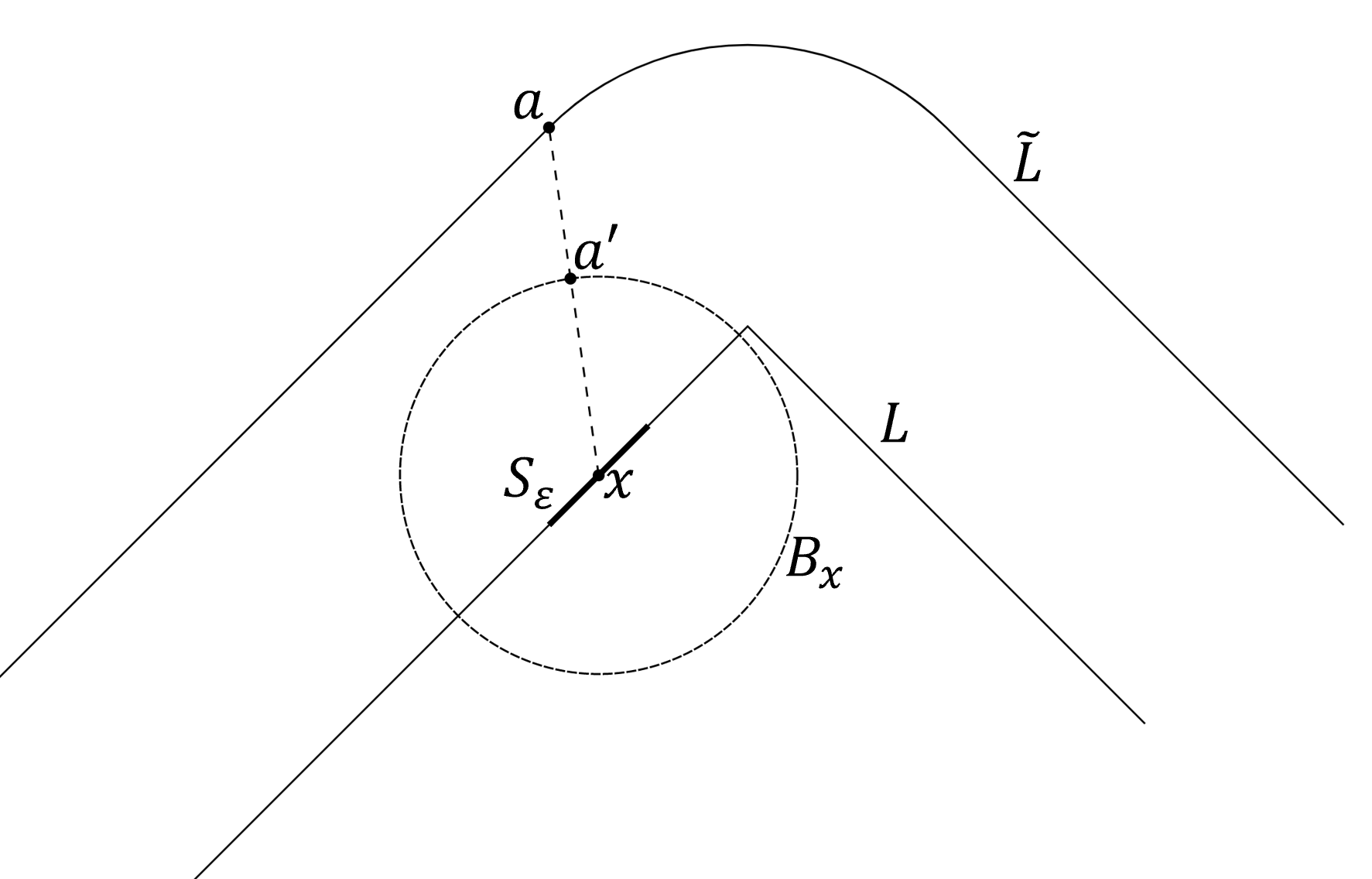}
\includegraphics[scale=0.42]{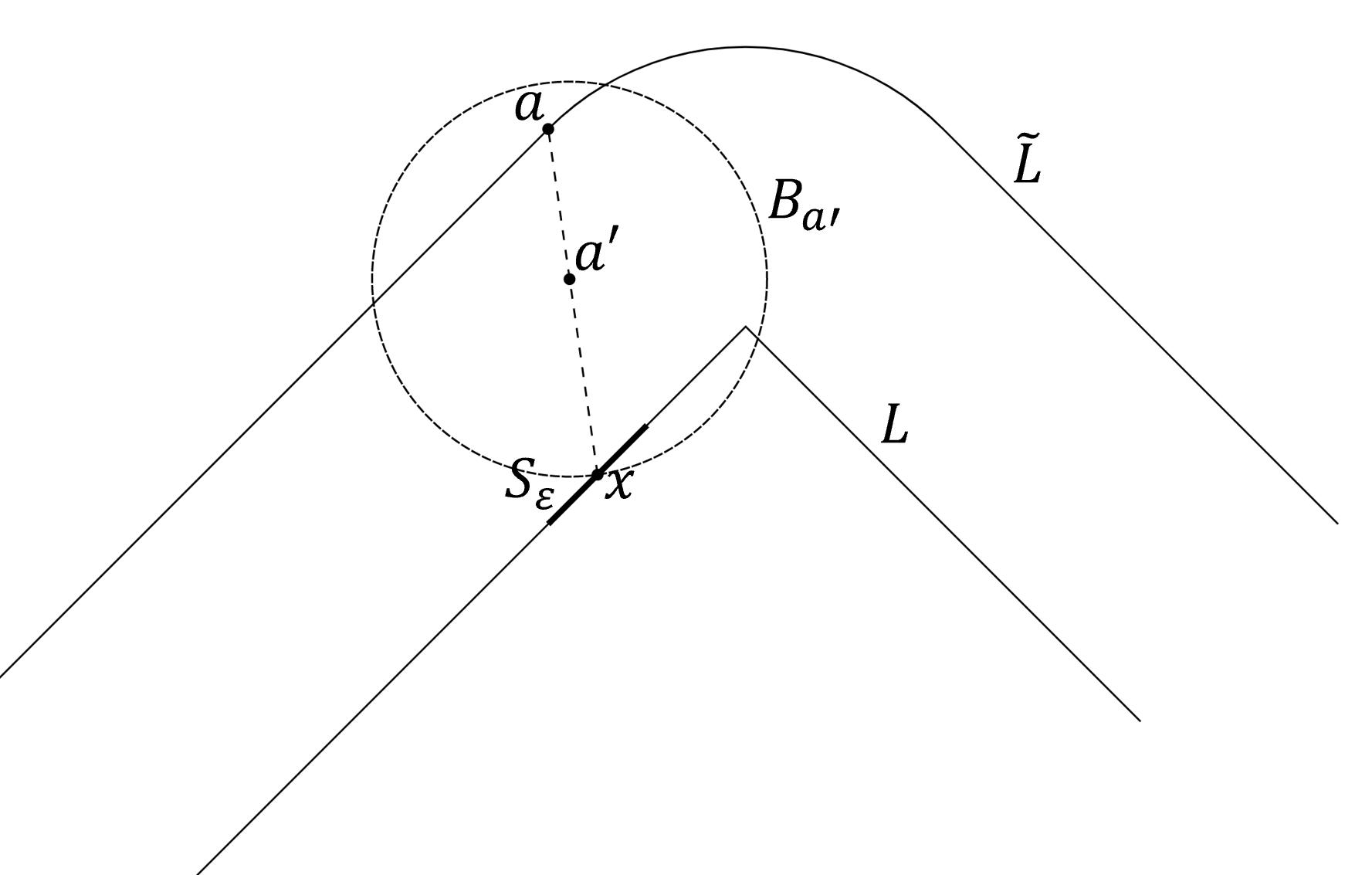}
\end{figure}

\begin{claim}
\label{claim:jac}
The push-forward of $\psi$ by $\Pi_x$ is absolutely continuous with respect to $\psi'$ with Radon-Nikodym derivative
\[
\frac{d(\psi\circ\Pi_x^{-1})(a')}{d\psi'(a')}
=
\frac{\sin \phi}{\|x-a\|^{d-1}},\qquad a'=\Pi_x(a)\in\Bbx
\]
where $\phi$ is the angle in $[0,\pi]$ between the tangent plane to $\Bkt$ at $a$ and the line segment $\overline{xa}$.
\end{claim}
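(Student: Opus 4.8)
\textbf{Proof proposal for Claim \ref{claim:jac}.}

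The plan is to compute the Radon--Nikodym derivative via a direct change-of-variables (Jacobian) argument for the radial projection map $\Pi_x$, restricted to $\Bkt$. First I would set up local coordinates: fix a point $a\in\Bkt$ and its image $a'=\Pi_x(a)\in\Bbx$, and work with a small surface patch $U\subset\Bkt$ around $a$. The map $\Pi_x$ factors as $a\mapsto a+t(a)(a-x)$ for the appropriate scalar $t(a)>0$ bringing $a$ onto the unit sphere centered at $x$; equivalently it is the composition of the radial contraction/dilation $y\mapsto x+\frac{y-x}{\|y-x\|}$. I would compute the determinant of the differential of this map between the tangent space $T_a\Bkt$ and the tangent space $T_{a'}\Bbx$.

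The key geometric input is to decompose the differential into two effects. The first is the scaling factor $1/\|x-a\|$ coming from the radial contraction: a sphere of radius $\|x-a\|$ is shrunk to the unit sphere, and this contributes $\|x-a\|^{-(d-1)}$ to the $(d-1)$-dimensional area element. The second is a ``tilt'' or ``foreshortening'' factor: the tangent plane $T_a\Bkt$ is in general not orthogonal to the radial direction $a-x$, so projecting it radially onto the sphere $\Bbx$ (whose tangent plane at $a'$ \emph{is} orthogonal to the radial direction) distorts area by the sine of the angle between $T_a\Bkt$ and the radial segment $\overline{xa}$ — this is exactly the quantity $\sin\phi$ in the statement. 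More precisely, I would argue that the cosine of the angle between $T_a\Bkt$ and $T_{a'}\Bbx$, viewed via the orthogonal projection of one onto the other, equals $\sin\phi$ (since $T_{a'}\Bbx\perp (a-x)$ and $\phi$ is measured from the radial line), and that radial projection between two surfaces acts on area elements by this cosine together with the radial scaling. Combining, $d(\psi\circ\Pi_x^{-1})/d\psi' = \sin\phi/\|x-a\|^{d-1}$, as claimed. I would note that $\Bkt$ is a smooth ($C^1$, indeed $C^{1,1}$) hypersurface because $\Kt = L_0 + 2\eta B_2^d$ is an outer parallel body, so the tangent plane and hence $\phi$ are well-defined $\psi$-almost everywhere (in fact everywhere), making these computations legitimate.

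The main obstacle I anticipate is carefully justifying the foreshortening factor $\sin\phi$: one must be precise about which two planes the angle is taken between and verify that the area-distortion of the radial projection $\Pi_x$ restricted to surfaces is genuinely the product of the pointwise scaling $\|x-a\|^{-(d-1)}$ and the single cosine factor, with no extra curvature terms surviving in the \emph{first-order} (Jacobian) computation. This is cleanest if one picks an orthonormal basis of $T_a\Bkt$ adapted so that one basis vector lies in the plane spanned by $a-x$ and the normal to $\Bkt$ at $a$, and the remaining $d-2$ are orthogonal to $a-x$; then the differential of the radial map is block-diagonal up to the uniform $1/\|x-a\|$ scaling, with the ``in-plane'' direction picking up the extra $\sin\phi$ and the other $d-2$ directions contributing trivially. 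I would also remark that the map is a bijection $\Bkt\to\Bbx$ (already observed in the text since $x$ is interior to the convex body $\Kt$), so the push-forward measure is well-defined and the Radon--Nikodym derivative is exactly this Jacobian.
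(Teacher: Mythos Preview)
Your proposal is correct and is precisely the approach the paper takes: its entire proof is the single sentence ``An explicit Jacobian calculation given the definition of $\Pi_x$ and smoothness of $\Bkt$ gives the result,'' and your plan (compute $d\Pi_x|_a = \frac{1}{\|x-a\|}P_{(a-x)^\perp}$, then factor the $(d-1)$-dimensional Jacobian on $T_a\Bkt$ into the radial scaling $\|x-a\|^{-(d-1)}$ and the foreshortening factor $\sin\phi$ via an adapted orthonormal basis) is exactly that calculation spelled out. Your remarks on the $C^{1,1}$ smoothness of $\Bkt=\partial(L_0+2\eta B_2^d)$ and the bijectivity of $\Pi_x$ are the only regularity inputs needed, and the paper uses them in the same way.
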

\begin{proof}
An explicit Jacobian calculation given the definition of $\Pi_x$ and smoothness of $\Bkt$ gives the result.
\end{proof}
%\begin{claim}
%$\nu$ is absolutely continuous with respect to $\psi'$, with Radon-Nikodym derivative
%$$ \frac{d\nu(a')}{d\psi'(a')} = \frac{\sin \phi}{\|x-a\|^{d-1}},\qquad a'=\Pi_x(a)\in\Bbx,$$
%where $\phi$ is the angle in $[0,\pi]$ between the tangent plane to $\Bkt$ at $a$ and the line segment $\overline{xa}$.
%\end{claim}
\begin{lemma}
\label{lem:circle_intersection}
%Let $S$ be a set such that it is an open subset of its own affine hull $\aff(S)$, which itself is a $d-2$ dimensional hyperplane,
Let $z\not\in\aff(\Se)$ be a point such that $\Pi_z$ is injective on $\Se$.
Let $V$ be a random two-dimensional subspace.  Then
\[
\P(V+z\cap S\neq0)=\mu(\Pi_z(S))/A_{d-2}
\]
where $\mu$ is the Hausdorff measure of $\Pi_z(\aff(S))$ and $A_{d-2}=\mu(\Pi_z(\aff(S)))$ (half the surface area of $S^{d-2}$).
\end{lemma}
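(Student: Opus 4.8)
The plan is to reduce the probability that a random two-dimensional subspace $V$ through the origin, translated to pass through $z$, hits the $(d-2)$-disk $S$ to a purely measure-theoretic statement about the radial projection $\Pi_z$ onto the unit sphere centered at $z$. The key observation is that $V+z$ meets $S$ if and only if the two-dimensional linear subspace $V$ (viewed in coordinates centered at $z$) meets the cone over $S$ from $z$, which happens if and only if the great circle $V\cap S^{d-3}_z$ (the unit sphere centered at $z$) meets the radial projection $\Pi_z(S)$ of $S$ onto that sphere. Since $V$ is the span of two i.i.d.\ radially symmetric (e.g.\ Gaussian) vectors, $V\cap S^{d-3}_z$ is a uniformly random great circle on the sphere $S^{d-3}_z$; by rotational invariance the distribution of this great circle depends only on the sphere, not on $z$.

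The main computation is then an integral-geometric identity: for a uniformly random great circle $C$ on $S^{n-1}$ and a $(n-2)$-dimensional "hyperplane-section" piece $\Sigma\subset S^{n-1}$ (here $n-1 = d-2$, so $C$ is a $1$-sphere and $\Sigma$ lies in a totally geodesic $S^{n-2}$), one has $\P[C\cap \Sigma\neq\emptyset] = \mathrm{Vol}_{n-2}(\Sigma)/A_{n-2}$ where $A_{n-2}$ is the volume of the full totally geodesic $S^{n-2}$. I would prove this by first noting the injectivity hypothesis on $\Pi_z|_S$ guarantees $\Pi_z(S)$ is an embedded $(d-2)$-dimensional piece of the totally geodesic sphere $\Pi_z(\aff(S))\cap S^{d-3}_z$, and then invoking the standard fact that a random great circle meets a fixed totally geodesic hypersphere in exactly two points almost surely, distributed uniformly on it; restricting to the sub-piece $\Pi_z(S)$ and using linearity of expectation on the indicator gives the claimed ratio (the factor $2$ cancels because $A_{d-2}$ is defined as half the surface area of $S^{d-2}$, i.e.\ one expects exactly one of the two intersection points to land in a region of relative measure $\mu(\Pi_z(S))/A_{d-2}$ on average, but since the disk is small and on one "side," only the count of intersections matters — one should be careful here and phrase it via $\E[\#(C\cap\Pi_z(S))] = 2\mathrm{Vol}(\Pi_z(S))/(2A_{d-2})$ and $\P[\neq\emptyset] = \E[\#]$ when a.s.\ at most one intersection occurs, which holds since $\Pi_z(S)$ spans less than half a great subcircle's worth once $S$ is small).

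The step I expect to be the main obstacle is making the "uniformly random great circle meets a totally geodesic hypersphere uniformly" statement precise and correctly bookkeeping the factor of two and the almost-sure "at most one intersection" claim: a random great circle generically meets a totally geodesic $S^{d-3}$ (codimension one in $S^{d-2}$) in exactly two antipodal-type points, and one must argue that at most one of these lands in $\Pi_z(S)$ (so that $\P[V+z\cap S\neq\emptyset]$ equals an expected count divided appropriately). This is where the hypothesis $z\notin\aff(S)$ and injectivity of $\Pi_z$ on $S$ enter: they ensure $\Pi_z(S)$ is a genuine embedded disk not wrapping around, and one can take $S$ small enough (as we do, $\epsilon\to 0$, in the application) that it contains no pair of points subtending angle $\pi$ at $z$. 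I would then conclude by combining: $\P[V+z\cap S\neq\emptyset] = \mu(\Pi_z(S))/A_{d-2}$, exactly as stated, with $A_{d-2}$ identified as half the surface area of the unit $(d-2)$-sphere via the standard formula for the measure of a totally geodesic hypersphere inside $S^{d-3}_z$ paired with the random great circle.
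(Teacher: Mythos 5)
Your argument is essentially the same as the paper's: both reduce, via rotational invariance of $V$, to a uniformly random antipodal pair on the totally geodesic $(d-2)$-sphere (equivalently, the great circle $V\cap S^{d-1}$ meeting the hemisphere $\Pi_z(\aff(S))$), at most one point of which can lie in $\Pi_z(S)$, yielding the ratio $\mu(\Pi_z(S))/A_{d-2}$. The only blemishes are notational and cosmetic: the unit sphere centered at $z$ is $S^{d-1}$, not $S^{d-3}$ (so one should take $n=d$ in your integral-geometric identity), and the appeal to smallness of $S$ is unnecessary, since $z\notin\aff(S)$ already forces $\Pi_z(S)$ into the open hemisphere $\Pi_z(\aff(S))$, which contains no antipodal pair.
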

\begin{proof}
%For simplicity, we will take $z$ to be the origin.
%Let $\mu'$ be the Hausdorff measure on $\aff(S)$.
Since $\aff(\Se)$ misses $z$, we have that
\(
\aff(\{z\}\cup \Se)
%=\spn(\Pi_z(S)-z)+z
\) is $d-1$ dimensional.  On the other hand, $\Pi_z$ is smooth and injective on $\aff(\Se)$ so $\Pi_z(\Se)$ itself is $d-2$ dimensional.
%This means that both $\mu$ and $\mu'$ are $d-2$ dimensional measures and $\Pi$ maps injectively between their domains.
Condition on
$(V+z)\not\subset\aff(\{z\}\cup S)%\spn(\Pi_z(S)-z)+z
$
, which occurs with probability $1$.
Then
$(V+z)\cap
%\spn(\Pi_z(S)-z)+z
\aff(\{z\}\cup S)
$ is a line through $z$.  By symmetry, the intersection of that line with $B_2^d+z$ will be a uniformly random antipodal pair.  Exactly one point from each pair will fall in $\Pi_z(\aff(S))$.  Thus, the event we care about is the event that $y\in\Pi_z(S)$ where $y$ is sampled uniformly from $\mu$.
\end{proof}
The following Lemma takes $a$ and $a'$ to be fixed, and depends only on the randomness of $V$.
\begin{lemma}\label{claim:aax}
Let $a$ be a point not in $\aff(\Se)$ and $a'=\Pi_x(a)$.
Let $\theta$ be the angle between $\Se$ and the ray emanating from $a$ through $x$.
Then, for a uniformly random $2-$plane $V$,
\[
{\P(V+a\cap\Se\neq\emptyset)}
=
\frac{\vol_{d-2}(S_\epsilon)}{A_{d-2}}\frac{\cos\theta}{\|x-a\|^{d-2}}(1+O(d\epsilon))
\]
and
\[
{\P(V+a'\cap\Se\neq\emptyset)}
=
\frac{\vol_{d-2}(S_\epsilon)}{A_{d-2}}(\cos\theta)(1+O(d\epsilon)).
\]
%$$\lim_{\epsilon\rightarrow 0} \frac{\P(V+a\cap\Se\neq\emptyset)}{\P (V+a'\cap\Se\neq\emptyset)} = \frac{1}{\|x-a\|^{d-2}},$$
where the convergence is uniform in $a,a'$.  In particular, the ratio of the above two quantities is $\|x-a\|^{d-2}$.
\end{lemma}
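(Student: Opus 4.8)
The plan is to reduce each of the two probabilities to a $(d-2)$-dimensional Hausdorff measure of a radial image via Lemma~\ref{lem:circle_intersection}, compute that measure by an elementary Jacobian estimate of the radial projection of the flat disk $\Se$, and read off the ratio using the fact that $x$, $a'$ and $a$ are collinear.

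First I would check the hypotheses of Lemma~\ref{lem:circle_intersection} for both base points. By assumption $a\notin\aff(\Se)$, and since $a'=\Pi_x(a)$ lies on the line through $x$ and $a$ while $x\in\aff(\Se)$, we also get $a'\notin\aff(\Se)$ (otherwise that whole line, hence $a$, would lie in $\aff(\Se)$). Both $\Pi_a$ and $\Pi_{a'}$ are injective on all of $\aff(\Se)$: two distinct points of an affine subspace that are collinear with a point $z$ would force $z$ onto a line contained in that subspace. Hence Lemma~\ref{lem:circle_intersection} gives
\[
\P(V+a\cap\Se\neq\emptyset)=\frac{\mu(\Pi_a(\Se))}{A_{d-2}},\qquad
\P(V+a'\cap\Se\neq\emptyset)=\frac{\mu(\Pi_{a'}(\Se))}{A_{d-2}},
\]
where in each case $\mu$ is $(d-2)$-dimensional Hausdorff measure on the corresponding image of $\aff(\Se)$ and $A_{d-2}$ is half the surface area of $S^{d-2}$, the same number in both cases.

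Next I would estimate $\mu(\Pi_z(\Se))$ for $z\in\{a,a'\}$. Since $\Pi_z$ restricted to the flat $(d-2)$-plane $\aff(\Se)$ is a diffeomorphism onto its image, the area formula gives $\mu(\Pi_z(\Se))=\int_{\Se}|J_z(y)|\,d\vol_{d-2}(y)$. Differentiating $\Pi_z(y)=z+(y-z)/\|y-z\|$ shows that $d(\Pi_z)_y$ equals $\|y-z\|^{-1}$ times orthogonal projection onto $(y-z)^\perp$, so restricting to the constant tangent space $L_0:=\aff(\Se)-x$ yields $|J_z(y)|=\|y-z\|^{-(d-2)}\,j_z(y)$, with $j_z(y)\in[0,1]$ the Jacobian of orthogonal projection of $L_0$ onto $(y-z)^\perp$. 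An elementary computation of this projection Jacobian gives $j_z(x)=\cos\theta$ with $\theta$ as in the statement, a quantity depending only on $L_0$ and the ray $\overline{zx}$; crucially, because $x$, $a'$, $a$ are collinear the rays $\overline{ax}$ and $\overline{a'x}$ coincide, so the same $\theta$ occurs for $z=a$ and $z=a'$. Because $\Se$ has radius $\epsilon$ and $\|x-z\|$ stays bounded away from $0$ over the configurations considered, both $j_z$ and $y\mapsto\|y-z\|^{-(d-2)}$ are within a factor $1+O(d\epsilon)$ of their values at $x$ throughout $\Se$ (the $d$ arising because there are $d-2$ singular values, each perturbed by $O(\epsilon)$). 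Integrating,
\[
\mu(\Pi_z(\Se))=\frac{\cos\theta}{\|x-z\|^{\,d-2}}\,\vol_{d-2}(\Se)\,(1+O(d\epsilon)),
\]
uniformly in $z$. Now $\|x-a'\|=1$ since $a'=\Pi_x(a)$ lies on the unit sphere centered at $x$, while $\|x-a\|$ is the stated distance; substituting $z=a$ and $z=a'$ into the two displayed probabilities gives the two asserted formulas, and dividing them leaves precisely $\|x-a\|^{d-2}$ (the two error factors contributing only a further $1+O(d\epsilon)$, absorbed as $\epsilon\to0$). Alternatively, the exact value of the ratio can be seen without any Jacobian by a dilation centered at $x$: scaling by $\lambda=1/\|x-a\|$ carries $(\Se,a)$ to $(S_{\lambda\epsilon},a')$ while preserving the set of rays from the base point, so $\mu(\Pi_a(\Se))=\mu(\Pi_{a'}(S_{\lambda\epsilon}))=\|x-a\|^{-(d-2)}\mu(\Pi_{a'}(\Se))(1+O(d\epsilon))$.

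The step I expect to be the main obstacle is making the error in the last display genuinely uniform: one must bound the $C^1$-norms of $j_z$ and of $y\mapsto\|y-z\|^{-(d-2)}$ on $\Se$ uniformly over all admissible pairs $(x,z)$, using that $z$ ranges over a compact set (such as $\partial\Kt$) and that $\|x-z\|\ge\eta$ in the intended application. This is routine real analysis, but it is exactly where the $O(d\epsilon)$ factor comes from, and it is the only place where the hypothesis \eqref{eqn:lclose} (which keeps $x$ safely inside $\Kt$) enters.
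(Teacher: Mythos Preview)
Your proposal is correct and follows essentially the same route as the paper: apply Lemma~\ref{lem:circle_intersection} at $z=a$ and $z=a'$, compute the Jacobian of the radial projection on $\aff(\Se)$ as $\cos\theta^z_y/\|y-z\|^{d-2}$, freeze it at $y=x$ with a $(1+O(d\epsilon))$ error, and use $\|x-a'\|=1$ together with the collinearity of $x,a,a'$. Your treatment is in fact a bit more careful than the paper's (explicitly verifying injectivity and discussing uniformity), and the alternative dilation argument for the ratio is a pleasant extra but not a departure in method.
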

\begin{proof}
We apply Lemma \ref{lem:circle_intersection} twice, both times with $\Se$ playing the role of $S$.  The first time we take $a$ to play the role of $z$, and the second time $a'$.  This gives
\[
{\P(V+a\cap \Se\neq\emptyset)}
=
{\mu_a(\Pi_a(\Se))}/A_{d-2}
\quad\text{and}\quad
{\P(V+a'\cap \Se\neq\emptyset)}
=
{\mu_{a'}(\Pi_{a'}(\Se))}/A_{d-2}
\]
where $\mu_a,\mu_{a'}$ are the Hausdorff measures on $\Pi_a(\aff(\Se)),\Pi_{a'}(\aff(\Se))$ respectively.
Let $\mu'$ be the surface measure on $\aff(\Se)$.  Then the Radon-Nikodym derivatives of $\mu'$ and the pull-backs of $\mu_a$ and $\mu_{a'}$ are
\[
\frac{d(\mu_a\circ\Pi_a)(y)}{d\mu'(y)}=\frac{\cos\theta^a_y}{\|y-a\|^{d-2}}
\quad\text{and}\quad
\frac{d(\mu_{a'}\circ\Pi_{a'})(y)}{d\mu'(y)}=
\frac{\cos\theta^{a'}_y}{\|y-a'\|^{d-2}}
\]
where $\theta^a_y,\theta^{a'}_y$ are the angles between $\Se$ and the rays from $a,a'$ to $y$ respectively.  This allows us to compute
\[
\mu_a(\Pi_a(\Se))=(\mu_a\circ\Pi_a)(\Se)
=\int_{\Se}\frac{\cos\theta^a_y}{\|y-a\|^{d-2}}d\mu(y)
=\vol_{d-2}(S_\epsilon)\frac{\cos\theta^a_x}{\|x-a\|^{d-2}}(1+O(d\epsilon)).
\]
%\[
%\mu_{a'}(\Pi_{a'}(\Se))=(\mu_{a'}\circ\Pi_{a'})(\Se)
%\int_{\Se}\frac{\cos\theta^{a'}_y}{\|y-a'\|^{d-2}}d\mu(y)
%=\frac{\cos\theta^{a'}_x}{\|x-a'\|^{d-2}}(1+o(1))
%=(\cos\theta^{a'}_x)(1+o(1))
%\]
The same is true for $a'$ in place of $a$. Note that $\theta^{a'}_x=\theta^a_x=\theta$, and that $\|x-a'\|=1$.  That gives the desired result.
%so the desired ratio is
%\[\frac1{\|x-a\|^{d-2}}.\]
%\[\frac{\|x-a'\|^{d-2}}{\|x-a\|^{d-2}}(1+O(d\epsilon))=\frac1{\|x-a\|^{d-2}}(1+O(d\epsilon)).\]
%The angle of incidence of $\overline{ax}$ and $\Se$ is the same as the angle of incidence of $\overline{a'x}$ and the $d-2$-plane containing $\Se$ by the definition of $T$. The probabilities in question are proportional to the $d-2$-dimensional measures of the radial projections of $\Se$ onto the unit spheres centered at $a$ and $a'$ respectively, and the ratio of these measures is equal the right hand side in the $\epsilon\rightarrow 0$ limit. Uniformity follows by compactness of $\Bbx,\Bkt$.
\end{proof}

\begin{lemma}[Reduction to $\Bbx$]\label{lem:reduction} Let $W$ be as above and let $W'$ be a uniformly random two dimensional plane through a uniformly random point $a'$ chosen from $\Bbx$. Then for sufficiently small $\epsilon>0$ (depending only on $L_0$):
$$ \Vol_{d-1}(\Bkt)\P [W\cap \Se\neq\emptyset]\ge \frac{r_1}{8r_2\eta}\Vol_{d-1}(\Bbx)\P[W'\cap \Se \neq\emptyset]
$$
\end{lemma}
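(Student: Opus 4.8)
The plan is to rewrite both hitting probabilities as surface integrals, transport one of them onto the other through the radial projection $\Pi_x$, and then reduce to a purely geometric inequality on $\Bbx$.

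Conditioning on the independent choices of base point and direction subspace,
\[
\Vol_{d-1}(\Bkt)\,\P[W\cap\Se\neq\emptyset]=\int_{\Bkt}\P(V+a\cap\Se\neq\emptyset)\,d\psi(a)
\]
and similarly $\Vol_{d-1}(\Bbx)\,\P[W'\cap\Se\neq\emptyset]=\int_{\Bbx}\P(V+a'\cap\Se\neq\emptyset)\,d\psi'(a')$, where $\P(V+a\cap\Se\neq\emptyset)$ denotes the probability over the random $2$-subspace only; thus it suffices to compare these two integrals. By Lemma \ref{claim:aax}, for $a'=\Pi_x(a)$ we have $\P(V+a\cap\Se\neq\emptyset)=\|x-a\|^{-(d-2)}\,\P(V+a'\cap\Se\neq\emptyset)\,(1+O(d\epsilon))$ uniformly in $a$, and by Claim \ref{claim:jac} the push-forward of $\psi$ under $\Pi_x$ has Radon-Nikodym density $\|x-a\|^{d-1}/\sin\phi$ with respect to $\psi'$. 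Substituting and absorbing $(1+O(d\epsilon))$ into a harmless $1-o(1)$ (we are free to take $\epsilon$ small) gives
\[
\Vol_{d-1}(\Bkt)\,\P[W\cap\Se\neq\emptyset]\ \ge\ (1-o(1))\int_{\Bbx}\frac{\|x-a\|}{\sin\phi}\;\P(V+a'\cap\Se\neq\emptyset)\,d\psi'(a'),\qquad a=\Pi_x^{-1}(a').
\]
Comparing with the integral for $\P[W'\cap\Se\neq\emptyset]$, it therefore suffices to show that the average of $J(a'):=\|x-\Pi_x^{-1}(a')\|/\sin\phi(a')$ over $\Bbx$ against the probability measure proportional to $\P(V+a'\cap\Se\neq\emptyset)\,d\psi'(a')$ is at least $r_1/(8r_2\eta)$.

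Two reformulations make this concrete. Put $\rho=\|x-a\|$ and let $n(a)$ and $H_a$ be the outward unit normal and the supporting hyperplane of $\Kt=L_0+2\eta B_2^d$ at $a$. Since $\Se\subset L\subset L_0$, the center $x$ of $\Se$ lies in $L_0$, so $x+2\eta B_2^d\subset\Kt$; hence $x$ is in the interior of $\Kt$ and $\Pi_x$ is the claimed bijection. Elementary trigonometry gives $\dist(x,H_a)=\langle n(a),a-x\rangle=\rho\sin\phi$, so $J(a')=\rho^2/\dist(x,H_a)$. Moreover, by Lemma \ref{claim:aax} applied with $\|x-a'\|=1$, the weight $\P(V+a'\cap\Se\neq\emptyset)$ equals $\cos\theta(a')$ up to a factor $(1+O(d\epsilon))$ and a constant, where $\theta(a')$ is the angle between $\aff(\Se)$ and the line $\overline{xa'}$; in particular this weight is supported on, and largest for, directions $a'-x$ nearly parallel to $\aff(\Se)$.

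The remaining step — the geometric heart of the lemma, and the main obstacle — is to lower bound the $\cos\theta(a')\,d\psi'(a')$-weighted average of $\rho^2/\dist(x,H_a)$ over $\Bbx$ by $r_1/(8r_2\eta)$. The guiding picture is that $\Se$ lies in the relative interior of a $(d-2)$-face $F$ of $L$, so $x$ is within $O(\eta)$ of the two facets of $L_0$ that determine $F$ and of the corresponding supporting hyperplanes of $\Kt$; hence for a direction $\omega=a'-x$ nearly parallel to $\aff(F)$ — exactly where the weight lives — the ray $x+t\omega$ skims along $\partial L_0$ near $F$ and, using the roundedness $r_1 B_2^d\subset L_0$, only leaves $\Kt$ after travelling a distance $\gtrsim r_1$, while the hyperplane $H_a$ it finally meets remains at distance $\asymp\eta$ from $x$; thus $\rho^2/\dist(x,H_a)$ is large on a set of directions carrying a constant fraction of the weight. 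Making this quantitative requires organizing directions by which facet of $L_0$ the ray exits through and carefully integrating the near-tangent regime, in which $\dist(x,H_a)=\rho\sin\phi$ is pinned near its minimum $\asymp\eta$ while $\rho$ can be much larger. The difficulty is genuine: $J(a')$ admits no pointwise lower bound better than $\asymp\eta$ (that value is attained in the outward directions, which carry negligible $\cos\theta$ weight), so one must genuinely exploit the interplay between roundedness, which makes $\rho$ large, and the codimension-$2$ location of $\Se$, which makes $\dist(x,H_a)$ small, along precisely the weight-carrying directions.
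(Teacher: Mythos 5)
Your proposal is not a proof: everything up to the change of variables is bookkeeping, and the entire quantitative content of the lemma is deferred to a final step that you yourself label ``the geometric heart'' and ``the main obstacle,'' for which you offer only a heuristic picture rather than an argument. Worse, the target you set up there appears unreachable. You need the $\cos\theta$-weighted average of $J(a')=\|x-a\|/\sin\phi$ over $\Bbx$ to be at least $r_1/(8r_2\eta)$, but $J$ satisfies the \emph{pointwise upper} bound $J\le \diam(\Kt)\cdot(4r_2/r_1)\le 16r_2^2/r_1$ (using $\sin\phi\ge r_1/(4r_2)$, the same estimate the paper uses, and $\Kt\subset 2r_2B_2^d$); since this is independent of $\eta$, no choice of weights can push the average above it, and it falls below $r_1/(8r_2\eta)$ as soon as $\eta< r_1^2/(128 r_2^3)$. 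So the ``interplay between roundedness and the codimension-$2$ location of $\Se$'' you invoke cannot rescue the estimate along the route you have chosen. (Two smaller issues: $L\subset L_0$ is false in general --- you only have $\hdist(L,L_0)\le\eta$, whence $\dist(x,\Bkt)\ge\eta$; and in the Jacobian computation behind Lemma \ref{claim:aax} the angle $\theta$ is measured from the \emph{normal} to $\Se$ within $\aff(\{a\}\cup\Se)$, so the weight $\cos\theta$ is largest for rays perpendicular to $\aff(\Se)$, not for the ``skimming'' directions your heuristic relies on.)

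The point where you part ways with the paper is Claim \ref{claim:jac}: the paper states the density of $\psi\circ\Pi_x^{-1}$ with respect to $\psi'$ as $\sin\phi/\|x-a\|^{d-1}$, and you use the reciprocal $\|x-a\|^{d-1}/\sin\phi$ while citing the same claim. With the paper's version, combining with Lemma \ref{claim:aax} leaves the weight $\sin\phi/\|x-a\|$, and the proof closes in one line via the pointwise bounds $\sin\phi\ge r_1/(4r_2)$ and $\dist(x,\Bkt)\ge 2\eta-\eta=\eta$ --- no directional averaging is attempted or needed. Your version of the Jacobian is the one a direct solid-angle computation produces (an area element at distance $\rho$ subtends solid angle $\sin\phi\,dA/\rho^{d-1}$, so the pushforward density should be $\rho^{d-1}/\sin\phi$), which is exactly why you end up facing a genuinely hard --- indeed, by the computation above, impossible --- averaging problem. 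You need to confront this discrepancy head-on rather than citing Claim \ref{claim:jac} for the opposite of what it says: either justify your Jacobian and explain how the lemma's conclusion can still hold (it cannot, along your route), or adopt the paper's Jacobian and give the short pointwise argument. As submitted, this is an outline with its decisive step missing.
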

\begin{proof}
Note that $a,a'$ miss $\aff(\Se)$ with probability $1$, so we implicitly condition on that event in the following.
\begin{align*}
    & \Vol_{d-1}(\Bkt) \P [W\cap \Se\neq\emptyset] \\
    &=\int \P [W\cap\Se\neq\emptyset \big| a]d\psi(a)
     \\&= \int \P [W\cap\Se\neq\emptyset \big| a'=T(a)]d(\psi\circ\Pi_x^{-1})(a')\qquad\textrm{by invertibility of $\Pi_x$ (\ref{claim:jac})}\\
     &=\int \P [W\cap\Se\neq\emptyset \big| a']\frac{\sin\phi}{\|x-a\|^{d-1}}d\psi'(a')\quad\textrm{by Claim \ref{claim:jac}}\\
     &\ge\int \left(\P [W'\cap\Se\neq\emptyset \big| a'](1/2)\right)\frac{\sin\phi}{\|x-a\|}d\psi'(a')\quad\textrm{by Claim \ref{claim:aax}, for sufficiently small $\epsilon$}\\
     &\ge\frac{r_1}{8r_2\eta} \int \P [W'\cap\Se\neq\emptyset \big| a']d\psi'(a') = \frac{r_1}{8r_2\eta}\Vol_{d-1}(\Bbx)\P[W'\cap \Se \neq\emptyset],
\end{align*}
where in the final inequality we have used $\|x-a\|\ge 2\eta-\eta=\eta$  and $\sin\phi\ge\frac{r_1}{4r_2}$ because $\Kt\supset L\supset (r_1-\eta)B_2^d\supset (r_1/2)B_2^d$ and $\Kt\subset (r_2+\eta)B_2^d\subset 2r_2 B_2^d$.
\end{proof}

\begin{lemma}[Intersection Probability for $\Bbx$]\label{lem:intersection}
\[
\P\left(W'\cap S_\epsilon\neq\emptyset\right)
=
\frac{\vol_{d-2}(S_\epsilon)}{A_{d-2}}\frac{C_d}{\sqrt d}(1+O(d\epsilon))
\]
for some constant $C_d=\Theta(1)$ depending on $d$.
\end{lemma}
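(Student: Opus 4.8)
The plan is to condition on the random point $a'\in\Bbx$, feed the conditional intersection probability through Lemma \ref{claim:aax}, and then integrate over $a'$; this reduces the statement to a single spherical average. Since $a'$ lies on $\Bbx$ we have $\|x-a'\|=1$, and $a'\notin\aff(\Se)$ almost surely (the $(d-2)$-plane $\aff(\Se)$ meets the unit sphere in a null set), so the second formula of Lemma \ref{claim:aax} applies directly with $a=a'$ and gives
\[
\P\big(V+a'\cap\Se\neq\emptyset\ \big|\ a'\big)=\frac{\vol_{d-2}(\Se)}{A_{d-2}}\,\cos\theta(a')\,\big(1+O(d\epsilon)\big),
\]
where $\theta(a')$ is the angle between $\Se$ and the ray from $a'$ through $x$, and the $O(d\epsilon)$ is uniform in $a'$. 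Taking expectations over $a'$ uniform on $\Bbx$ therefore yields $\P(W'\cap\Se\neq\emptyset)=\frac{\vol_{d-2}(\Se)}{A_{d-2}}(1+O(d\epsilon))\,\E_{a'}[\cos\theta(a')]$, and it remains to show $\E_{a'}[\cos\theta(a')]=C_d/\sqrt d$ with $C_d=\Theta(1)$.

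To evaluate this average, translate so that $x$ is the origin; then $a'$ is uniform on the unit sphere $S^{d-1}$, the affine hull $\aff(\Se)$ is a $(d-2)$-dimensional \emph{linear} subspace, and the ray from $a'$ through $x$ has direction $-a'$. By the same Jacobian-of-a-radial-projection computation that underlies Claim \ref{claim:jac} and Lemma \ref{claim:aax} (the relevant scaling factor is the norm of the component of the viewing direction normal to $\aff(\Se)$), the angle $\theta(a')$ between $-a'$ and $\aff(\Se)$ satisfies $\cos\theta(a')=\|\mathrm{proj}_N a'\|$, where $N:=\aff(\Se)^\perp$ is a fixed $2$-dimensional linear subspace. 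Hence $\E_{a'}[\cos\theta(a')]=\E_{a'\sim S^{d-1}}\|\mathrm{proj}_N a'\|$.

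Finally, by rotational invariance take $N=\mathrm{span}(e_1,e_2)$ and write $a'=g/\|g\|$ for $g\sim N(0,I_d)$; then $\|\mathrm{proj}_N a'\|^2=(g_1^2+g_2^2)/\|g\|^2\sim \mathrm{Beta}(1,(d-2)/2)$, so
\[
\E_{a'\sim S^{d-1}}\|\mathrm{proj}_N a'\|=\frac{B\!\left(\tfrac32,\tfrac{d-2}{2}\right)}{B\!\left(1,\tfrac{d-2}{2}\right)}=\frac{\sqrt\pi}{2}\,\frac{\Gamma(d/2)}{\Gamma((d+1)/2)}.
\]
By the standard Gamma-function asymptotics $\Gamma(d/2)/\Gamma((d+1)/2)=\sqrt{2/d}\,(1+O(1/d))$, this equals $C_d/\sqrt d$ with $C_d=\sqrt{\pi/2}\,(1+O(1/d))=\Theta(1)$ (and one checks directly it is $1$ for $d=2$, $\pi/4$ for $d=3$, decreasing to $\sqrt{\pi/2}$). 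Substituting back into the first display proves the lemma.

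The computation itself is routine; the only point needing care is that the $O(d\epsilon)$ error term of Lemma \ref{claim:aax} be uniform enough over $a'$ to survive the integration, which is exactly the uniformity asserted there. If one prefers not to lean on that uniformity near the set where $\cos\theta(a')$ is tiny, it suffices for the downstream application (Lemma \ref{lem:quadrature}) to prove only the lower bound $C_d=\Omega(1)$: restrict the $a'$-integral to $\{\cos\theta(a')\ge c/\sqrt d\}$, which by the Beta computation has measure $\Omega(1)$ on $S^{d-1}$ and on which Lemma \ref{claim:aax} applies with negligible relative error, giving $\E_{a'}[\cos\theta(a')]\ge (c/\sqrt d)\cdot\Omega(1)$.
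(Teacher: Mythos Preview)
Your proof is correct and follows the same approach as the paper's: condition on $a'$, apply the second formula of Lemma~\ref{claim:aax}, and average $\cos\theta(a')$ over the sphere by symmetry. You go further than the paper by explicitly identifying $\cos\theta(a')=\|\mathrm{proj}_{N}a'\|$ (with $N=\aff(\Se)^\perp$ two-dimensional) and computing $C_d$ exactly via the Beta distribution, whereas the paper simply asserts $\E[\cos\theta]=C_d/\sqrt d$ with $C_d=\Theta(1)$ without further justification.
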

\begin{proof}
Using iterated expectation, we can write
\[
\P\left(W'\cap S_\epsilon\neq\emptyset\right)
=
\E\left(\P\left(W'\cap S_\epsilon\neq\emptyset : a'\right)\right)
\]
where the outer expectation is over the randomness of $a'$ and inner probability over $V$.  The inner probability is given by \ref{claim:aax}
as
\[
\frac{\vol_{d-2}(S_\epsilon)}{A_{d-2}}\cos(\theta_x^{a'})(1+O(d\epsilon)).
\]
The only dependence on $a'$ is in $\cos(\theta_x^{a'})$.  However, by symmetry of the distribution of $a'$, $\theta_x^{a'}$ might as well measure the angle between a uniform random vector selected from $\Bbx$ and any fixed line.  Thus
\[\E[\cos\theta_x^{a'}]=C_d/\sqrt d,\]
for some constant $C_d=\Theta(1)$ depending on $d$.
\end{proof}

We can now complete the proof of Lemma \ref{lem:quadrature}. Combining Lemmas \ref{lem:reduction} and \ref{lem:intersection}, we have for sufficiently small $\epsilon>0$:

$$ \Vol_{d-1}(\Bkt)\P [W\cap S_\epsilon]\ge \frac{r_2}{2r_1\eta}\vol_{d-1}(\Bbx)\cdot \frac{\Omega(1)}{\sqrt{d}A_{d-2}}\vol_{d-2}(S_\epsilon)=\quadbound\vol_{d-2}(S_\epsilon)$$
since $\vol_{d-2}(\Bbx)/A_{d-2}=2\pi/d$, as desired.

\subsection{Removing Assumptions \aS,\aR}\label{sec:roundedness}
In this section we explain how any instance of the smoothed unit LP model may be reduced to one for which $\aS,\aR$ hold with parameter \eqref{eqn:rvalue}, incurring only a polynomial loss in $m$. 
\begin{proof}[Proof of Theorem \ref{thm:q2}] 

The idea is to
add the noise vector $g_j$ as the sum of two independent Gaussians $g_{j, 1} \sim N(0, \sigma_1^2)$ and $g_{j, 2} \sim N(0,\sigma_2^2)$ with $\sigma_1$ guaranteeing roundedness and $\sigma_2$ supplying the necessary anticoncentration and concentration for the main part of the proof. Given $\sigma<1/d$, set 
$$\sigma_1=m^8\sigma_2$$ and $\sigma_1^2+\sigma_2^2=\sigma^2$ and let $K_1$ be equal to $K_0$ perturbed by $g_1$ only. Applying Lemma \ref{lem:smoothrounded} to each $K_0^{(j)}$ and taking a union bound, we have
$$ K_1^{(j)}\supset rB_2^d\quad\forall j\le m,\qquad r=\Omega(\sigma m^{-5})=\Omega(\sigma_2 m^3),$$
with probability $1-O(m^{-2})$. Since $\sigma<1/d$, another union bound reveals that
$$K_1\subset 2B_2^d$$
with probability $1-O(m^{-2})$; let $K_2=K_1/2$. Now $K_2$ is an instance of the smoothed unit LP model, $(K_2,\sigma_2)$ satisfy $\aR$ with $r=\Omega(\sigma_2 m^3)=\Omega(\sigma/m^5)$, and
$$ 6\sqrt{d\log m}\sigma_2=o(r/m^2),$$
so $(K_2,\sigma_2)$ also satisfy $\aS$, establishing \eqref{eqn:rvalue} with the role of $(K_0,\sigma)$ now played by $(K_2,\sigma_2)$.

Invoking Theorem \ref{thm:5.15}, we conclude that with probability $1-1/m^2$, for every $\phi\in (0,1)$ there is a subset $G\subset\Omega$ with $\pi(G)\ge (1-\phi)\pi(\Omega)$ and facet diameter
$$ \tilde{O}(m^3d^8/(\sigma/m^8)^2(\sigma/m^5)\phi)=\poly(m,d)/\sigma^3\phi.$$
Moreover, by Lemma \ref{lem:degeneracy}(3), we have
$$\qu(G)\le 2\pi(G)\le 2\phi\cdot \pi(\Omega)\le \phi\cdot O(m^5/\sigma)\qu(\Omega),$$
so we conclude that $\qu(G)\ge (1-\psi)\qu(\Omega)$ for $\psi = O(m^5\phi/\sigma)$. Rewriting the diameter bound in terms of $\psi$ yields the desired conclusion. The probability may be upgraded to $1-1/\poly(m)$ by Remark \ref{rem:polyprob}
\end{proof}

\begin{lemma}[Roundedness of Smoothed Polytopes]\label{lem:smoothrounded} Suppose we have $m \geq d+1$ points $a_1, \dots, a_m \in \R^d$, and these are perturbed to $v_1, \dots, v_m$ by adding independent $g_j \sim N(0, \sigma_1^2 I_d)$ to each respective $a_j$.
Then, with probability at least $1 - O(m^{-3})$,  the convex hull $K$ of $v_1, \dots, v_m$
contains a ball of radius $r_{in} \geq \Omega(\sigma_1 m^{-5}).$\end{lemma}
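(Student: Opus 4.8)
## Proof Proposal for Lemma (Roundedness of Smoothed Polytopes)

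The plan is to show that the smoothed convex hull $K = \conv(v_1,\dots,v_m)$ contains a large ball around a suitable center, by exhibiting $d+1$ vertices of $K$ that form a ``fat'' simplex and then lower-bounding the inradius of that simplex. First I would fix any $d+1$ of the points, say $a_1,\dots,a_{d+1}$, and consider their perturbations $v_1,\dots,v_{d+1}$. The key probabilistic fact is a Gaussian \emph{anticoncentration} statement: for each $i$, conditioned on the other $d$ points $\{v_k : k \neq i, k \le d+1\}$, the point $v_i$ lies at distance at least $s := \Omega(\sigma_1 m^{-4})$ from the affine hyperplane $\aff(v_k : k\neq i)$ with probability $1 - O(m^{-5})$. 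This holds because the component of $g_i$ orthogonal to that hyperplane is a one-dimensional Gaussian of standard deviation $\sigma_1$, whose density is bounded by $1/\sigma_1$, so $\P[\dist < s] \le 2s/\sigma_1$; choosing $s = \sigma_1 m^{-4}$ and union-bounding over the $d+1 \le m$ indices gives that all these distances exceed $s$ with probability $1 - O(m^{-3})$.

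On this event, the simplex $\Sigma := \conv(v_1,\dots,v_{d+1})$ is a genuine $d$-dimensional simplex in which every vertex is at distance $\ge s$ from the affine span of the remaining $d$ vertices. By Lemma~\ref{lem:inradius} (inradius of a simplex, with $t = d$), $\Sigma$ contains a ball of radius $s/(d+1) = \Omega(\sigma_1 m^{-5})$. Since $\Sigma \subset K$ by convexity, $K$ contains a ball of radius $r_{in} \ge \Omega(\sigma_1 m^{-5})$, which is the claim. The $m^{-5}$ rather than $m^{-4}$ simply absorbs the extra factor of $d+1 \le m$ from Lemma~\ref{lem:inradius}; one could optimize the polynomial powers but it is immaterial here.

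I expect the only real subtlety — and hence the ``main obstacle'' — to be making the anticoncentration step fully rigorous, since one is conditioning on the random hyperplane $\aff(v_k : k \neq i)$, which is itself a function of the other Gaussians. The clean way to handle this is to condition on $\ghat_i$ (all the noise vectors except $g_i$), which fixes the hyperplane; then $v_i = a_i + g_i$ is still an honest nondegenerate Gaussian independent of the conditioning, so its distance to the \emph{now-deterministic} hyperplane is a shifted one-dimensional Gaussian and the density bound applies. Integrating out the conditioning then gives the unconditional bound. One minor point to check is that the hyperplane is almost surely well-defined (the $d$ points $v_k$, $k\neq i$, are affinely independent with probability one, since a finite union of lower-dimensional affine subspaces has measure zero), so the conditioning is not vacuous. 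Everything else is a routine union bound.
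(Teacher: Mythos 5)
Your proof is correct and takes essentially the same route as the paper's: restrict to $d+1$ of the points, lower-bound $\dist(v_i,\aff(v_k: k\neq i))$ via one-dimensional Gaussian anticoncentration (made rigorous, exactly as you suggest, by conditioning on the noise vectors other than $g_i$), union bound over the $d+1$ indices, and apply Lemma~\ref{lem:inradius} to get the inradius $\Omega(\sigma_1 m^{-4}/(d+1)) = \Omega(\sigma_1 m^{-5})$. The only nit is internal: with $s=\sigma_1 m^{-4}$ the per-index failure probability is $O(m^{-4})$, not the $O(m^{-5})$ you first state, but your subsequent union-bound arithmetic is the correct one and matches the lemma.
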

\begin{proof}
Without loss of generality, taking the first $d+1$ points $a_i$, we may assume that $m = d+1$.
Then $K$ is the convex hull of $d + 1$ points $v_1, \dots, v_{d+1}$. The probability that the affine span of these points equals $\R^d$ is $1$. Let $r_{in}$ be the inradius of $K$; by Lemma \ref{lem:inradius}, we have
$$r_{in} \geq \frac{\min_i\dist(v_i, \aff(F_i))}{d+1}.$$
Let us now fix an $i$ and obtain and obtain a probabilistic lower bound on $\frac{\dist(v_i, \aff(F_i))}{d+1}.$
Reorder the points (if necessary) so that $i = d + 1$. It now follows that given the the affine span $A$ of the points $v_1, \dots, v_d$ and given $a_{d+1}$, the distribution of $\dist(v_{d+1}, A)$ is the same as the distribution of $|\tilde{g} + \dist(a_{d+1}, A)|$, where $\tilde{g} \sim N(0, \sigma_1^2)$ has the distribution of a one dimensional Gaussian with variance $\sigma_1^2$. However, the probability that $|\tilde{g} + \dist(a_{d+1}, A)|$ is less than $\sigma_1 m^{-4}$ is at most $O\left(m^{-4}\right)$. Therefore, by the union bound, 
$$\mathbb{P}\left[\min_i \dist(v_i, \aff(F_i)) > \sigma d^{-4}\right] > 1 - O\left(m^{-3}\right).$$
It follows that $$\mathbb{P}\left[r_{in} > \frac{\sigma m^{-4}}{d+1}\right] > 1 - O\left(m^{-3}\right),$$
as desired.
\end{proof}

\subsection*{Acknowledgments}
We thank Daniel Dadush, Bo'az Klartag, and Ramon van Handel for helpful comments and suggestions on an earlier version of this manuscript. We thank Ramon van Handel for pointing out the important reference \cite{izmestiev2010colin}. We thank the IUSSTF virtual center on ``Polynomials as an Algorithmic Paradigm'' for supporting this collaboration.

\bibliographystyle{alpha}
%\bibliography{result}

\begin{thebibliography}{CEKMS19}

\bibitem[Ale37]{alexandroff1937theorie}
Paul Alexandroff.
\newblock Zur homologie. theorie der kompakten.
\newblock {\em Compositio Mathematica}, 4:256--270, 1937.

\bibitem[ALGV19]{anari2019log}
Nima Anari, Kuikui Liu, Shayan~Oveis Gharan, and Cynthia Vinzant.
\newblock Log-concave polynomials {II}: {H}igh-dimensional walks and an {FPRAS}
  for counting bases of a matroid.
\newblock In {\em S{TOC}'19---{P}roceedings of the 51st {A}nnual {ACM} {SIGACT}
  {S}ymposium on {T}heory of {C}omputing}, pages 1--12. ACM, New York, 2019.

\bibitem[Bar74]{barnette1974upper}
David Barnette.
\newblock An upper bound for the diameter of a polytope.
\newblock {\em Discrete Math.}, 10:9--13, 1974.

\bibitem[BDSE{\etalchar{+}}14]{bonifas2014sub}
Nicolas Bonifas, Marco Di~Summa, Friedrich Eisenbrand, Nicolai H\"{a}hnle, and
  Martin Niemeier.
\newblock On sub-determinants and the diameter of polyhedra.
\newblock {\em Discrete Comput. Geom.}, 52(1):102--115, 2014.

\bibitem[Bor78]{borgwardt1977untersuchungen}
Karl-Heinz Borgwardt.
\newblock Untersuchungen zur {A}symptotik der mittleren {S}chrittzahl von
  {S}implexverfahren in der linearen {O}ptimierung.
\newblock In {\em Second {S}ymposium on {O}perations {R}esearch
  ({R}heinisch-{W}estf\"{a}lische {T}ech. {H}ochsch. {A}achen, {A}achen, 1977),
  {T}eil 1}, Operations Res. Verfahren, XXVIII, pages 332--345. Hain,
  K\"{o}nigstein/Ts., 1978.

\bibitem[Bor87]{borgwardt2012simplex}
Karl-Heinz Borgwardt.
\newblock {\em The simplex method}, volume~1 of {\em Algorithms and
  Combinatorics: Study and Research Texts}.
\newblock Springer-Verlag, Berlin, 1987.
\newblock A probabilistic analysis.

\bibitem[BR13]{brunsch2013finding}
Tobias Brunsch and Heiko R\"{o}glin.
\newblock Finding short paths on polytopes by the shadow vertex algorithm.
\newblock In {\em Automata, languages, and programming. {P}art {I}}, volume
  7965 of {\em Lecture Notes in Comput. Sci.}, pages 279--290. Springer,
  Heidelberg, 2013.

\bibitem[CEKMS19]{cordero2019one}
Dario Cordero-Erausquin, Bo'az Klartag, Quentin Merigot, and Filippo
  Santambrogio.
\newblock One more proof of the {A}lexandrov-{F}enchel inequality.
\newblock {\em C. R. Math. Acad. Sci. Paris}, 357(8):676--680, 2019.

\bibitem[Chu89]{chung1989diameters}
Fan~RK Chung.
\newblock Diameters and eigenvalues.
\newblock {\em Journal of the American Mathematical Society}, 2(2):187--196,
  1989.

\bibitem[DF94]{dyer1994random}
Martin Dyer and Alan Frieze.
\newblock Random walks, totally unimodular matrices, and a randomised dual
  simplex algorithm.
\newblock {\em Math. Programming}, 64(1, Ser. A):1--16, 1994.

\bibitem[DH16]{dadush2016shadow}
Daniel Dadush and Nicolai H\"{a}hnle.
\newblock On the shadow simplex method for curved polyhedra.
\newblock {\em Discrete Comput. Geom.}, 56(4):882--909, 2016.

\bibitem[DH20a]{dadush2020friendly}
Daniel Dadush and Sophie Huiberts.
\newblock A friendly smoothed analysis of the simplex method.
\newblock {\em SIAM J. Comput.}, 49(5):STOC18--449--STOC18--499, 2020.

\bibitem[DH20b]{dadush2020smoothed}
Daniel Dadush and Sophie Huiberts.
\newblock A friendly smoothed analysis of the simplex method.
\newblock {\em SIAM J. Comput.}, 49(5):STOC18--449--STOC18--499, 2020.

\bibitem[DS05]{deshpande2005improved}
A.~Deshpande and D.A. Spielman.
\newblock Improved smoothed analysis of the shadow vertex simplex method.
\newblock In {\em 46th Annual IEEE Symposium on Foundations of Computer Science
  (FOCS'05)}, pages 349--356, 2005.

\bibitem[EV17]{eisenbrand2017geometric}
Friedrich Eisenbrand and Santosh Vempala.
\newblock Geometric random edge.
\newblock {\em Math. Program.}, 164(1-2, Ser. A):325--339, 2017.

\bibitem[GS20]{grimmett2020probability}
Geoffrey~R. Grimmett and David~R. Stirzaker.
\newblock {\em Probability and random processes}.
\newblock Oxford University Press, Oxford, 2020.
\newblock Fourth edition [of 0667520].

\bibitem[Izm10]{izmestiev2010colin}
Ivan Izmestiev.
\newblock The {C}olin de {V}erdi\`ere number and graphs of polytopes.
\newblock {\em Israel J. Math.}, 178:427--444, 2010.

\bibitem[KK92]{kalai1992quasi}
Gil Kalai and Daniel~J. Kleitman.
\newblock A quasi-polynomial bound for the diameter of graphs of polyhedra.
\newblock {\em Bull. Amer. Math. Soc. (N.S.)}, 26(2):315--316, 1992.

\bibitem[Lar70]{larman1970paths}
D.~G. Larman.
\newblock Paths of polytopes.
\newblock {\em Proc. London Math. Soc. (3)}, 20:161--178, 1970.

\bibitem[Mih92]{mihail1992expansion}
Milena Mihail.
\newblock On the expansion of combinatorial polytopes.
\newblock In {\em Mathematical foundations of computer science 1992 ({P}rague,
  1992)}, volume 629 of {\em Lecture Notes in Comput. Sci.}, pages 37--49.
  Springer, Berlin, 1992.

\bibitem[Nad89]{naddef1989hirsch}
Denis Naddef.
\newblock The {H}irsch conjecture is true for {$(0,1)$}-polytopes.
\newblock {\em Math. Programming}, 45(1, (Ser. B)):109--110, 1989.

\bibitem[San12]{santos2012counterexample}
Francisco Santos.
\newblock A counterexample to the {H}irsch conjecture.
\newblock {\em Ann. of Math. (2)}, 176(1):383--412, 2012.

\bibitem[Sch94]{schneider1994polytopes}
Rolf Schneider.
\newblock Polytopes and {B}runn-{M}inkowski theory.
\newblock In {\em Polytopes: abstract, convex and computational ({S}carborough,
  {ON}, 1993)}, volume 440 of {\em NATO Adv. Sci. Inst. Ser. C: Math. Phys.
  Sci.}, pages 273--299. Kluwer Acad. Publ., Dordrecht, 1994.

\bibitem[Sch14]{schneider2014convex}
Rolf Schneider.
\newblock {\em Convex bodies: the {B}runn-{M}inkowski theory}, volume 151 of
  {\em Encyclopedia of Mathematics and its Applications}.
\newblock Cambridge University Press, Cambridge, expanded edition, 2014.

\bibitem[ST88]{sokal1988absence}
Alan~D Sokal and Lawrence~E Thomas.
\newblock Absence of mass gap for a class of stochastic contour models.
\newblock {\em Journal of Statistical Physics}, 51(5):907--947, 1988.

\bibitem[ST04]{spielman2004smoothed}
Daniel~A. Spielman and Shang-Hua Teng.
\newblock Smoothed analysis of algorithms: why the simplex algorithm usually
  takes polynomial time.
\newblock {\em J. ACM}, 51(3):385--463, 2004.

\bibitem[Suk19]{sukegawa2019asymptotically}
Noriyoshi Sukegawa.
\newblock An asymptotically improved upper bound on the diameter of polyhedra.
\newblock {\em Discrete Comput. Geom.}, 62(3):690--699, 2019.

\bibitem[Tim99]{timorin1999analogue}
V.~A. Timorin.
\newblock An analogue of the {H}odge-{R}iemann relations for simple convex
  polyhedra.
\newblock {\em Uspekhi Mat. Nauk}, 54(2(326)):113--162, 1999.

\bibitem[Tod14]{todd2014improved}
Michael~J. Todd.
\newblock An improved {K}alai-{K}leitman bound for the diameter of a
  polyhedron.
\newblock {\em SIAM J. Discrete Math.}, 28(4):1944--1947, 2014.

\bibitem[VDH95]{van1995eigenvalues}
Edwin~R. Van~Dam and Willem~H. Haemers.
\newblock Eigenvalues and the diameter of graphs.
\newblock {\em Linear and Multilinear Algebra}, 39(1-2):33--44, 1995.

\bibitem[Ver09]{vershynin2009beyond}
Roman Vershynin.
\newblock Beyond {H}irsch conjecture: walks on random polytopes and smoothed
  complexity of the simplex method.
\newblock {\em SIAM J. Comput.}, 39(2):646--678, 2009.

\end{thebibliography}
\newcommand{\etalchar}[1]{$^{#1}$}

\begin{appendix}
\section{Proof of Theorem \ref{lem:one_positive_eig}}\label{app:proof}

\begin{proof}
Let $\p(\slacks)=\{x\in\R^{d}: \A x\le\slacks\}$ for $\A\in\R^{N\times d}$ and $c>0$.  Let $m_i$ be the $i$th row of $\A$ and $F_i(\slacks)$ be the volume of $\p(\slacks)\cap\{x:\herm{m_i}{x}=\slacks_i\}$, the $i$th facet of $\p$.
Pick $\A$ to be \textit{minimal} with respect to $\slacks$ in the sense that for each $i$ there exists a point $x$ with $\herm{m_i}{x}=b_i$ but $\herm{m_j}{x}<b_j$ for $j\neq i$; note that this implies $F_c(c)>0$ for each $i\le N$.
%We will abuse notation slightly and write $H(c)$ for $H(K(c))$.
Let $$R(c):=\text{diag}(\slacks_1/|F_1(c)|,\cdots,\slacks_N/|F_N(c)|)$$ and $\tilde{H}(c)=R^{1/2}(c)H(K(c))R^{1/2}(c)$.
By Sylvester's inertial law, it suffices to prove the statement for $\tilde H$ instead of $H$.

If $\p(\slacks)$ is simple, then the proof of the statement can be found in \cite[Proposition 3]{cordero2019one}.  In fact, it is shown there that  for simple polytopes the positive eigenvalue of $\tilde{H}(c)$ is equal to $d-1$.
If $\p(\slacks)$ is not simple, let $u$ be a vector with independent entries sampled uniformly from $[0,1)$.
%add a small multiple $\delta$ of a uniform random variable $u_i\in[0,1)$ to $b_i$ for each $i$.
We will show for each $\delta>0$ small enough that $\p(\slacks+\delta u)$ is simple with probability $1$.  Then we will show that the entries of $\tilde H(\slacks)$ are continuous in a neighborhood of $\slacks$ so that for each $\epsilon$, picking $\delta$ small enough guarantees 
\[|\tilde H(\slacks)-\tilde H(\slacks+\delta u)|\le\epsilon\]
entry-wise.

If we have both these claims, then we will have a sequence of matrices of the form
\[\tilde H(\slacks) + \left(\tilde H(\slacks+\delta u) - \tilde H(\slacks)\right)\]
which approaches $\tilde H(\slacks)$ with each matrix in the sequence having exactly one positive eigenvalue, which is $d-1$.  Since the spectrum is a continuous function of the matrix, we conclude $\tilde H(\slacks)$ has exactly one positive eigenvalue, which is $d-1$.

\textit{$\p(\slacks+\delta u)$ is simple}: 
For any collection $S\subset [n]$ if indices, let $\A_S$ (resp. $\slacks_S$, $u_S$) be the submatrix of $\A$ (resp. $\slacks$, $u$) with row $i$ included if and only if $i\in S$.
Fix any $S$ with $|S|>d$.  Then $\text{rk}(A_S)\le d$, so the column space of $A_S$ has measure $0$ in $\R^{|S|}$, meaning it misses $\slacks_S+u_S$ with probability $1$.
%Let $S$ be some minimal set of indices such that the $x^*$ satisfying $\A_Sx^*=\slacks_S+\delta u_S$ is unique.  Then $|S|=d$ and $x^*=\A_S^{-1}(\slacks_S+\delta u_S)$.  But then $\herm{a_i}{x^*}$ is independent of $u_i$ for $i\not\in S$, so $\herm{a_i}{x^*}\neq b_i+\delta u_i$ with probability $1$ since the distribution of $u_i$ is continuous. So, $|S|$ is also maximal.

\textit{$\tilde H(\slacks)$ is continuous}:
It suffices to argue that $H(\p(c))$ and $R(c)$ are each continuous.  First we argue that $K(c)$ has $N$ facets everywhere in a neighborhood of $c$.
Since $\A$ is is minimal for $\slacks$, the facets of $\p(\slacks)$ correspond exactly to the rows of $\A$.
Perturbations of $\slacks$ do not change the rows of $\A$, so this correspondence is preserved if $\A$ is minimal for $\slacks+\delta u$.  To this end, let $x^{(i)}$ be such that $\herm{m_i}{x^{(i)}}=\slacks_i$ and 
$\herm{m_j}{x^{(i)}}<\slacks_j$ for $j\neq i$. Then simply pick
\[
\delta<\frac12
\frac{\min_i\min_{j\neq i}\left(\slacks_j-\herm{m_j}{x^{(i)}}\right)}
{\max_{i,j}|\herm{m_j}{m_i}|/\|m_i\|^2}
\]
and let \[\tilde x^{(i)} = x^{(i)} + u_i\delta\frac{m_i}{\|m_i\|^2}.\]
Then $\herm{m_i}{\tilde x^{(i)}} = \slacks_i+u_i\delta$
but
\[\herm{m_j}{\tilde x^{(i)}} = \herm{m_j}{x^{(i)}}+u_i\delta\frac{\herm{m_j}{m_i}}{\|m_i\|^2}
<\herm{m_j}{x^{(i)}} + \frac12(\slacks_j - \herm{m_j}{x^{(i)}}) < \slacks_j < \slacks_j+u_j\delta
\]
as desired.  This also rules out discontinuities in $|F_i(c)|$, so $R(c)$ is indeed continuous.

Now for $H(\p(\slacks))$.
For pairs $i\neq j$ such that $m_i$ is a multiple of $m_j$, we have $F_{ij}=0$ independently of $\slacks$.
Otherwise, note that
$\csc(\theta_{ij}),\cot(\theta_{ij})$ are finite and depend only on $\A$, so it suffices to show that $F_{ij}$ is continuous in $\slacks$.  To this end, $F_{ij}$ itself is the volume of a $d-2$ dimensional polytope which after rotation is of the form $\{x'\in\R^{d-2}\st\A'x'\le f(\slacks)\}$ where $M'\in\R^{(N-2)\times (d-2)}$ and $f:\R^N\to\R^{N-2}$ is an affine function of $\slacks$.
Specifically, take $i=1$ and $j=2$ without loss of generality and let $Q$ be the product of two Householder transformations such that
\[
\A Q=\begin{bmatrix}
\alpha\\
\beta&\gamma\\
v_1 & v_2 & \A'
\end{bmatrix}
\]
for some scalars $\alpha,\beta,\gamma$,  $v_1,v_2\in\R^{N-2}$, and $\A'\in\R^{(N-2)\times(d-2)}$.
Let $Q^Tx=(x_1', x_2', x_3')^T$ where $x_1',x_2'$ are scalars and $x_3'$ contains the remaining $d-2$ entries of $Q^Tx$.
Let $\slacks=(\slacks_1, \slacks_2,\slacks_3)^T$ where $\slacks_1,\slacks_2$ are scalars and $\slacks_3$ contains the remaining $N-2$ entries of $\slacks$.  Then
\[\p(\slacks)=\{x\in\R^d\st\A x\le\slacks\}=\{x\st\A QQ^Tx\le \slacks\}=\left\{x\st\begin{bmatrix}
\alpha\\
\beta&\gamma\\
v_1 & v_2 & \A'
\end{bmatrix}
\begin{bmatrix}x_1'\\x_2'\\x_3'\end{bmatrix}
\le
\begin{bmatrix}\slacks_1\\\slacks_2\\\slacks_3\end{bmatrix}
\right\}.\]
Restricting to the points where the first two constraints tight
means setting $x_1'=\slacks_1/\alpha$ and $x_2'=(\slacks_2-\beta x_1)/\gamma$.  We therefore have
%The volume of the facet obtained by taking the first two constraints to be tight
%is the volume of the subset of $P(b)$ taken by setting 
%$x_1'=b_1/\alpha$ and
%$x_2'=(b_2-\beta x_1)/\gamma$.  We therefore have
\[F_{12}=\vol(\{x_3'\,|\,\A'x_3'\le \slacks_3-x_1'v_1-x_2'v_2\})\]
which is continuous in $\slacks$.

%Our perturbations will all increase the slack variables ($b$ in $Ax\le b$) by some small amount.
%Let $P(b)=\{x\,|\,Ax\le b\}$.
%If $P(b)$ is simple, then quit.  Otherwise, reorder the constraints so that first constraint $\herm{a_1}{x}\le b_1$ is tight at some vertex $v$ in $P$ with more than $d$ incident facets.  We will increase the slack $b_1$ so that the number of facets incident to $v$ is decreased by one, but the entries in $H(K)$ are not changed by too much.  So, our goal is for each $\eps>0$ and $k\in\mathbb{N}$ to pick $\delta>0$ small enough so that
%\[|H(P(b+\delta e_1))-H(P(b))|\le\eps/2^k\]
%entry-wise.  We will replace $b$ with $b+\delta e_1$, increment $k$, and repeat this process many times.
%To see that this process will halt, consider the pair of statistics $$(d_{\max}\text{ the maximum degree of any vertex}, l\text{ the number of vertices with degree }d_{\max}).$$
%Note that $l\le m^d$ always and $d_{\max}\le m$ does not increase.  If $l=1$, then one iteration will decrease $d_{\max}$.  If $l>1$, then one iteration will decrease $l$.  Thus, this process will halt after at most $m\cdot m^d$ iterations.

\end{proof}

\end{appendix}

\end{document}